\documentclass[10pt]{amsart} 

\usepackage{amssymb}
\usepackage{mathrsfs}
\usepackage{tikz}
\usepackage{pgfplots} 
\usepackage{nicefrac}
\usetikzlibrary{arrows}
\usepackage[linktocpage=true]{hyperref} 

\theoremstyle{plain}

\newtheorem{thm}{Theorem}[section]
\newtheorem{cor}[thm]{Corollary}
\newtheorem{lem}[thm]{Lemma}
\newtheorem{prop}[thm]{Proposition}

\theoremstyle{definition}
\newtheorem{defn}[thm]{Definition}
\newtheorem{sit}[thm]{Situation}
\newtheorem{claim}[thm]{Claim}
\newtheorem{Q}[thm]{Question}

\theoremstyle{remark}
\newtheorem{rmk}[thm]{Remark}

\newcommand{\zz}{{\mathbb Z}}

\newcommand{\cc}{{\mathbb C}}
\newcommand{\kk}{{\mathbb K}}
\newcommand{\hh}{{\mathrm H}}
\newcommand{\spec}{\mathrm{Spec}\mathop{ }}
\newcommand{\modu}{\mathcal}
\newcommand{\sh}{\mathscr}

\hypersetup{colorlinks=true, citecolor=blue, linkcolor=blue, urlcolor=blue}

\begin{document}

\title[The Severi problem for abelian surfaces in the primitive case]{The Severi problem for abelian surfaces in the primitive case}

\author[Adrian Zahariuc]{Adrian Zahariuc}
\address{Mathematics and Statistics Department, University of Windsor, 401 Sunset Ave, Windsor, ON, N9B 3P4, Canada}
\email{adrian.zahariuc@uwindsor.ca}
\thanks{The author was partially supported by an NSERC Discovery Grant}

\subjclass[2010]{Primary 14H10; Secondary 14K99, 14H30.}

\keywords{Severi variety, abelian surface, irreducibility, degeneration.}

\begin{abstract}
We prove that the irreducible components of primitive class Severi varieties of general abelian surfaces are completely determined by the maximal factorization through an isogeny of the maps from the normalized curves.
\end{abstract}

\maketitle

\setcounter{tocdepth}{1}  
\tableofcontents 

\section{Introduction}\label{sec: intro} 

We work over the field $\cc$ of complex numbers.

Severi varieties are roughly moduli spaces of plane curves of fixed degree and geometric genus \cite[Anhang F]{[Se21]}. More generally, the moduli spaces of curves of fixed homology class and geometric genus on a projective surface can also be called Severi varieties. Although the local properties of Severi varieties: dimension, smoothness, facts concerning the general curves, etc., are reasonably well-understood for a few types of surfaces, not much is known about the global geometry of these spaces besides enumerative aspects. By a landmark theorem of Harris \cite{[Ha86]}, the Severi varieties of ${\mathbb P}^2$ are irreducible. This was previously known as the Severi problem; by extension, proving that the Severi varieties of certain surfaces are irreducible, or identifying their irreducible components, may be referred to as `a' Severi problem.

The Severi problem for K3 surfaces is a well-known folklore conjecture. With the trivial exception of the genus $0$ case, the Severi varieties of (say, very general) projective K3 surfaces are expected to be irreducible. The problem is still open even for curves in the primitive class (that is, the positive generator of the Neron-Severi group of the very general projective K3 surface of some given degree), with the exception of cases when the genus is sufficiently close to the arithmetic genus: roughly the top $\nicefrac{1}{4}$ of the range for the primitive case \cite{[CD19]}, respectively roughly the top $\nicefrac{1}{6}$ of the range in general \cite{[Ke13]}. Unfortunately, the type of argument used to prove these results seems very difficult to use below the top $\nicefrac{1}{3}$ of the range asymptotically. Please see also \cite{[Ke05], [CD12], [Ba19], [De20], [CFGK17], [Ch99]}. 

K3 surfaces and abelian surfaces are similar in many ways, and this is also true of their Severi varieties. Although Severi varieties of K3 surfaces have been the subject of much research, the study of Severi varieties of abelian surfaces was initiated only quite recently by Knutsen, Lelli-Chiesa and Mongardi \cite{[KLM19]} in the analogous situation when the curve class is primitive (on the other hand, the answer to the curve counting problem in this setup has been long known, by work of Bryan and Leung \cite{[BL99]}). The authors propose the analogue of the conjecture discussed in the paragraph above: the Severi problem for general $(1,d)$-polarized abelian surfaces and curves in the primitive class \cite[Question 4.2]{[KLM19]}. 

The main goal of our paper is to answer this question. 

Let $(A,{\sh L})$ be a \emph{general} $(1,d)$-polarized abelian surface over $\cc$, and $\beta \in \mathrm{NS}(A)$ corresponding to $c_1({\sh L})$. First, we sketch our construction of the Severi variety. Note that if $f:C \to A$ is an unramified map from a smooth connected genus $g$ curve $C$ such that $f_*[C] = \beta$, then $f$ is birational onto its image, and in particular $2 \leq g \leq p_a(\beta) = d+1$ by the genus-degree formula. There is an open substack
$$ {\modu V}_g(A,\beta) \subset \overline{\modu M}_g(A,\beta) \quad \quad \text{(soon to be renamed $V_g(A,\beta)$)}$$
of the moduli stack of stable maps which consists of unramified stable maps with smooth sources. This follows from ${\modu M}_g \subset {\mathfrak M}_g$ open, \cite[\href{https://stacks.math.columbia.edu/tag/0475}{Tag 0475}]{[stacks]}, and properness of prestable curves. In fact, ${\modu V}_g(A,\beta)$ is (the stack associated to) a scheme, so in \S\ref{sec: intro} we will write $V_g(A,\beta)$ instead (however, our unsubstantial use of stacks will be a necessity of language throughout the main body of the paper). Indeed, ${\modu V}_g(A,\beta)$ is an algebraic space by \cite[Theorem 2.2.5, part (1)]{[Co07]} because it is a Deligne-Mumford stack whose objects have no nontrivial automorphisms, and hence a quasi-projective scheme since a projective coarse moduli space exists for $\overline{\modu M}_g(A,\beta)$ \cite[Theorem 1]{[FP95]}, and hence a quasi-projective one exists for ${\modu V}_g(A,\beta)$. Furthermore, Proposition \ref{prop: relatively smooth} implies that $V_g(A,\beta)$ is smooth of dimension $g$, hence it must be a disjoint union of smooth quasi-projective varieties of dimension $g$. The problem we wish to address is that of identifying these components. 

Recall that the $(1,d)$-polarization on $A$ induces a $(1,d)$-polarization on the dual abelian surface $A^\vee = \mathrm{Pic}^0(A)$, namely the isogeny $\smash{ \mu:A^\vee \to A }$ characterized by the property that $\mu \lambda$ is the multiplication by $d$ map, where $\lambda:A \to A^\vee$ is the given polarization on $A$. Let $\Sigma_g$ be the set of isogenies $\theta:A' \to A$ such that
$$ \displaystyle{ \deg \theta \in \left\{\frac{d}{\ell}: \ell \geq g-1 \text{ integer} \right\} \cap \zz } \quad \quad \text{ and } \quad \quad \text{$\mu$ factors through $\theta$,} $$
up to the obvious notion of isomorphism. 
For all $\theta \in \Sigma_g$, the subspace $V_g(A,\beta;\theta)$ of $V_g(A,\beta)$ consisting of maps $f:C \to A$ which factorize sharply through $\theta$, that is, $f$ factors through $\theta$ but not through any higher degree isogeny, is both open and closed, cf. \S\ref{ssec: factorization through isogenies} and Remark \ref{rmk: connect to intro}.

\begin{thm}\label{thm: main theorem}
Let $(A,{\sh L})$ be a general $(1,d)$-polarized abelian surface and $g$ such that $3 \leq g \leq d+1$. Then, with notation as above,
\begin{equation}\label{eqn:main theorem decomposition} V_g(A,\beta) = \bigsqcup_{\theta \in \Sigma_g} V_g(A,\beta;\theta) \end{equation}
is the decomposition of $V_g(A,\beta)$ into \textbf{irreducible} components.
\end{thm}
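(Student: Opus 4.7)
The plan has three main steps. First, the decomposition \eqref{eqn:main theorem decomposition} into open-and-closed subsets already follows from the sharp-factorization structure introduced before the statement: the locus of maps factoring through \emph{some} isogeny of degree at least $n$ is closed, so each sharp-factorization stratum $V_g(A,\beta;\theta)$ is both open and closed in $V_g(A,\beta)$. Hence the theorem reduces to showing that each $V_g(A,\beta;\theta)$ is \emph{nonempty} and \emph{irreducible}. Nonemptiness for each $\theta: A' \to A$ in $\Sigma_g$ amounts to exhibiting a primitive unramified genus-$g$ map $f': C \to A'$ in the class $\beta' := \theta^*\beta/\deg\theta \in \mathrm{NS}(A')$, and then taking $f = \theta \circ f'$. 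The defining constraint $(g-1)\deg\theta \leq d$ on $\Sigma_g$ is precisely the genus-degree compatibility $g \leq p_a(\beta')+1$ on $A'$ (since $(\beta')^2 = 2d/\deg\theta$), so such $f'$ should be produced by a deformation-smoothing argument starting from a suitable nodal reducible curve on $A'$.

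For irreducibility --- the main content --- my plan is to first reduce to the primitive stratum $\theta = \mathrm{id}$: writing $f = \theta \circ f'$ with $f'$ not factoring through any further isogeny of $A'$ gives an isomorphism of $V_g(A,\beta;\theta)$ with the primitive Severi stratum on $A'$ for its induced polarization. Since $(A,{\sh L})$ is general in the $(1,d)$-polarized moduli, $A'$ is general in the corresponding $(1, d/\deg\theta)$-polarized moduli, so it is enough to prove irreducibility of the primitive stratum for every general polarized abelian surface in the relevant moduli. For the primitive case, I would degenerate $(A,{\sh L})$ inside its moduli to a boundary point (for example a type II degeneration to a chain or cycle of simpler, more combinatorial surfaces), run a limit-and-smoothing analysis on $V_g(A,\beta;\mathrm{id})$, enumerate the combinatorial limit types of primitive unramified genus-$g$ stable maps, and verify that all such limits belong to a single irreducible family, forcing $V_g(A,\beta;\mathrm{id})$ to be irreducible on the generic fiber. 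A viable alternative is to bypass the explicit enumeration with a monodromy argument showing that the monodromy of the universal $V_g(A,\beta;\mathrm{id})$ over the $(1,d)$-moduli acts transitively on the set of components of a fiber.

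The hardest step will be controlling the primitive (i.e.\ sharp-factorization) condition through the degeneration. A priori, limits of primitive curves may factor through nontrivial isogenies of components of the central fiber, producing spurious combinatorial strata, or the limit of a primitive curve may acquire a subtle special-fiber-only factorization. The very generality of $(A,{\sh L})$ is what should rule out these pathologies, but making this precise --- either via a direct smoothability criterion detecting which combinatorial types extend to the generic fiber, or via the transitive monodromy mentioned above --- is the delicate heart of the proof. A secondary challenge is to arrange the nonemptiness construction so that the resulting curves factor sharply through $\theta$ and not through any proper factor, which may require checking that the nodal reducible model used for smoothing itself has no spurious isogeny-factorization structure.
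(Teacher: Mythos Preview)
Your high-level architecture is sound and broadly matches the paper: the open-and-closed decomposition is established essentially as you say, and the reduction to the primitive stratum via $V_g(A,\beta;\theta) \cong V_g(A',\beta';\mathrm{id})$ is valid (the paper's Proposition \ref{prop: total decomposition} in fact constructs the relevant \'etale morphism $\tau(\theta)$, though it does not explicitly perform this reduction). Nonemptiness is also handled by degeneration, much as you suggest.

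However, there is a genuine gap in your plan for irreducibility, and it lies exactly where you expect the difficulty but not in the form you anticipate. First, the degeneration the paper uses is \emph{not} a type II (semi-abelic or chain/cycle) degeneration, but a specialization within smooth abelian surfaces to a product $E \times F$ of elliptic curves, with $\beta_o = [E] + d[F]$. Second, and more importantly, your statement ``verify that all such limits belong to a single irreducible family'' is precisely what fails: the central-fiber Severi space ${\modu V}_o$ is \emph{highly reducible}. Its components are indexed by length-$(g-1)$, degree-$d$ ``lattice partitions'' $\varpi = (\Lambda_1,\ldots,\Lambda_{g-1})$ of $\Lambda[\theta] \subset \hh_1(F,\zz)$, each parametrizing comb-like stable maps with backbone a fiber of $E\times F \to F$ and teeth the isogenies $F_{\Lambda_i} \to F$. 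So the argument cannot conclude by showing the central fiber is irreducible; instead one must show that all these central-fiber components lie in a \emph{single} component of the total family ${\modu V}$ over the base curve~$\Delta$.

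The paper does this in two nontrivial steps that your proposal does not anticipate. First (Proposition \ref{prop: flexibility}), a dimension/evaluation argument forces every irreducible component of ${\modu V}$ to contain some ${\modu V}[\varpi]$. Second (Claim \ref{claim: key claim}), adjacent partitions $\varpi_1,\varpi_2$ connected by a ``roof'' $\varpi_1 \succ \varpi_3 \prec \varpi_2$ always lie in the same component of ${\modu V}$, and the combinatorial graph on partitions is connected (Proposition \ref{prop: graph is connected}). The proof of Claim \ref{claim: key claim} is the technical heart: it requires a deformability criterion (Theorem \ref{thm: necessary condition to deform}) stating that a compact-type stable map to $E\times F$ can deform to nearby fibers only if its positive-genus teeth are \emph{quasi-traceless} covers of $F$ in the sense of Definition \ref{defn: quasi-traceless}. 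This is proved by an iterated blow-up (``bubbling'') of $F$-fibers producing tree-like expansions whose leaf components are Atiyah ruled surfaces over $F$; Lemma \ref{lem: connection to Atiyah surface} then extracts the quasi-traceless condition. None of this apparatus is visible from your plan, and neither a direct enumeration of limit types nor a monodromy argument will produce it: the difficulty is not that primitive curves acquire spurious isogeny factorizations at the limit, but that one must identify, among the many reducible limits, enough \emph{non-simple} deformable stable maps (the ``transitional'' and ``qsqt'' maps of \S\ref{ssec: proof of main claim}) to connect the simple loci ${\modu V}[\varpi]$ inside ${\modu V}$.
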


The omission of the case $g=2$ is due to the fact that it behaves differently, and in fact it is quite trivial: specifying a map from a smooth genus $2$ curve into an abelian surface is equivalent up to translation to specifying an isogeny into the abelian surface, by the Albanese universal property of Jacobians. For the subtle question of the singularities of (not necessarily primitive class) genus $2$ curves on abelian surfaces, please see \cite{[KL19]}.  

The number of components can be computed to be $ \smash{ |\Sigma_g| = \sum_{h|e|d, \mathop{} e \leq \frac{d}{g-1}} h }$. 
We return for a moment to the earlier discussion regarding the status of irreducibility depending on the position of $g$ in the range $[0,p_a(\beta)]$. In our situation, the Severi varieties are always irreducible for $g$ roughly in the top $\nicefrac{1}{2}$ of the range, but in the lower half they may well be reducible. As in the case of K3 surfaces, the result was previously known in roughly the top $\nicefrac{1}{4}$ of the range, by \cite[Theorem 1]{[De20]} and \cite[Propositions 3.2 and 3.3]{[BS91]} or \cite[Proposition 1.5]{[AM16]}.

Theorem \ref{thm: main theorem} has an `existence' part, that $V_g(A,\beta;\theta)$ is nonempty, and an `irreducibility' part, that $V_g(A,\beta;\theta)$ is irreducible. Both parts are proved by specializing to a product $E \times F$ of two elliptic curves, as in \cite{[BL99]}. (The most relevant feature of the product surface $E \times F$ is that all effective divisors $D \subset E \times F$ of algebraic class $\beta_{E \times F} = E \times \{[pt]\} + d \{[pt]\} \times F $ are sums of one fiber of $E \times F \to F$ and $d$ fibers of $E \times F \to E$.) The proof of the existence part is easy, but relies implicitly on nontrivial folklore ideas on the reduction of obstruction spaces by the semiregularity map. We will use the formulation in \cite{[KT14]}. The general principle to prove irreducibility was suggested in \cite{[Za19b]}: first degenerate to a highly reducible moduli space, then argue that its components coalesce when regenerating. However, the geometry is completely different from that in \cite{[Za19b]} and we don't rely on the theory of stable maps to degenerate targets \cite{[Li01]}. Instead, will use Theorem \ref{thm: necessary condition to deform}, which gives a fairly accurate criterion to determine which stable maps to $E \times F$ deform to nearby fibers, going well beyond the stable maps that come up in the calculation of the (reduced) Gromov-Witten invariants. Its proof is inspired by X. Chen's proof that primitive class rational curves on general K3 surfaces are nodal \cite{[Ch02]}. What Theorem \ref{thm: necessary condition to deform} does not provide in its current form is a fully satisfactory picture of how these `limit stable maps' vary in moduli. This can be done in ad hoc ways on sufficiently simple strata of the space of stable maps, and this is how we will proceed in this paper, although it would be highly desirable (and probably extremely useful) to have a more moduli-friendly version of this criterion, which would hopefully also apply to K3 surfaces; please see Question \ref{Q: main question for future}.

If, as in \cite{[KLM19]}, $V_{\{\sh L\},\delta}$ is the Severi variety of class $\beta$ curves $Y \subset A$ with $\delta = d+1-g$ nodes and no other singularities, then Theorem \ref{thm: main theorem} implies that two curves $Y_1,Y_2$ in $V_{\{\sh L\},\delta}$ belong to the same irreducible component of $\smash{ V_{\{\sh L\},\delta} }$ if and only if the maps from their normalizations $\smash{ \widetilde{Y}_i \to A }$ factor through the same maximal isogenies $A' \to A$ (using the `simultaneous resolution' of \cite[Th\'{e}or\`{e}me 1]{[Te80]} to construct the morphism $V_{\{\sh L\},\delta} \to V_g(A,\beta)$, which can even be shown to be an open immersion with a bit of work). What is not clear a priori is whether each $V_g(A,\beta;\theta)$ contains maps with nodal images. Fortunately, for $g \geq 5$, this follows from \cite[Theorem 1.3]{[KLM19]}, so, at least for $g \geq 5$, a decomposition completely analogous to \eqref{eqn:main theorem decomposition} holds for $V_{\{\sh L\},\delta}$ as well. The decomposition into irreducible components of the `universal' Severi variety can be deduced quite easily from the case of individual surfaces, and will not be written explicitly here. 

\subsection*{Acknowledgements} I would like to thank Brian Osserman for encouragement, and many useful discussions directly or indirectly related to this paper. In the final stages of writing this paper, I was supported by an NSERC Discovery Grant. 

\section{Families of Severi varieties}\label{sec: families of severi varieties}

\subsection{Factorization through isogenies}\label{ssec: factorization through isogenies} In this subsection, we will explain why the decomposition \eqref{eqn:main theorem decomposition} makes sense. Although factorizations through isogenies can be discussed simply in terms of fundamental groups, we adopt slightly more algebraic language, which handles degenerations and other future technicalities easier. In particular, we need to work in families in order to handle the specialization to $E \times F$ later in the paper. 

\begin{lem}\label{lem: pure isogeny calculation}
Let $V$ be a $(1,d)$-polarized abelian surface over an algebraically closed field of characteristic $0$, $\beta \in \mathrm{NS}(V)$ corresponding to the polarization, and $\mu:V^\vee \to V$ the induced $(1,d)$-polarization on the dual abelian surface. 

If $\theta:V' \to V$ is an isogeny of abelian surfaces, the following are equivalent:
\begin{enumerate}
\item there exists $\beta' \in \mathrm{NS}(V')$ such that $\theta_* \beta' = \beta$;
\item $\deg \theta$ divides $d$, and $\mu$ factors through $\theta$.
\end{enumerate}
Moreover, if these conditions are satisfied, $\beta'$ induces a polarization on $V'$ of type $(1,d')$, such that $d = d' \deg \theta$.
\end{lem}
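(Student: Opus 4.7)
The plan is to translate both conditions into statements about alternating integer forms on the lattice $\Lambda := H_1(V,\mathbb{Z})$. Let $E$ be the alternating form on $\Lambda$ associated to $\beta$, and choose a symplectic basis $e_1, e_2, f_1, f_2$ of $\Lambda$ with $E(e_1,f_1)=1$, $E(e_2,f_2)=d$, other pairings zero. An isogeny $\theta: V'\to V$ corresponds to a full-rank sublattice $\Lambda' \subset \Lambda$ of index $n := \deg\theta$, and $n\Lambda \subset \Lambda'$ automatically.

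The easy translations go as follows. Since $\theta^*\theta_* = n\cdot\mathrm{id}$ on the torsion-free group $\mathrm{NS}$, condition (1) is equivalent to ``$\theta^*\beta$ is divisible by $n$ in $\mathrm{NS}(V')$,'' i.e., $E(\Lambda',\Lambda') \subset n\mathbb{Z}$. A direct calculation in the chosen basis gives $\mu_*(\Lambda^*) = d\mathbb{Z}e_1 \oplus \mathbb{Z}e_2 \oplus d\mathbb{Z}f_1 \oplus \mathbb{Z}f_2$, and ``$\mu$ factors through $\theta$'' translates into the lattice inclusion $\mu_*(\Lambda^*) \subset \Lambda'$.

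The core of the proof is a reduction modulo $n$. Setting $\bar E := E \bmod n$ on $M := \Lambda/n\Lambda \cong (\mathbb{Z}/n)^4$, condition (1) becomes ``$\bar\Lambda' := \Lambda'/n\Lambda$ is $\bar E$-isotropic,'' with $|\bar\Lambda'| = n^3$. A short calculation shows $\mathrm{rad}(\bar E)$ has order $\gcd(n,d)^2$ and maximal isotropic subgroups of $M$ have order $\gcd(n,d)\cdot n^2$. Comparing these forces $n \mid d$, and in that case isotropy of $\bar\Lambda'$ (now of maximal size) is equivalent to $\bar\Lambda' \supset \mathrm{rad}(\bar E) = \langle \bar e_2,\bar f_2\rangle$, i.e., to $\Lambda' \supset \mu_*(\Lambda^*)$. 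The converse reduces to showing that any order-$n$ subgroup of $(\mathbb{Z}/n)^2$ is isotropic for the standard symplectic form, a short Bezout argument exploiting that such a subgroup is the kernel of a surjection $(\mathbb{Z}/n)^2 \to \mathbb{Z}/n$. For the ``moreover,'' $E|_{\Lambda'} = nE'$ gives $\mathrm{Pf}(E') = d/n$, so $(\beta')^2 = 2d/n$ and $\deg\lambda' = (d/n)^2$; $\beta'$ is ample because $n\beta' = \theta^*\beta$ pulls back ample to ample, and primitivity of $\beta$ forces primitivity of $\beta'$ (from $\beta = \theta_*\beta'$ by $\mathbb{Z}$-linearity of $\theta_*$), yielding polarization type $(1, d/n)$. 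The main obstacle is the mod-$n$ bookkeeping around $\bar E$—identifying its radical and the sizes of its maximal isotropics—together with the small lemma about symplectically isotropic subgroups of $(\mathbb{Z}/n)^2$.
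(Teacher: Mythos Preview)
Your reduction-mod-$n$ strategy is sound and genuinely different from the paper's argument. The paper works directly with the lattices: for $(2')\Rightarrow(1')$ it observes that $\Lambda'\supset B=\langle e_2,f_2\rangle$ forces $\Lambda'=A'\oplus B$ for some $A'\subset\langle e_1,f_1\rangle$, after which the claim is immediate; for $(1')\Rightarrow(2')$ it proves and exploits the general inequality $[L:L']\,\phi(L\times L)\subseteq\phi(L'\times L')$ for alternating forms, together with its equality case, to rule out $\Lambda'\subset A\oplus B'$ for any proper $B'\subsetneq B$. Your approach via the radical and the size of maximal isotropics in $M=\Lambda/n\Lambda$ is cleaner conceptually and makes the appearance of the divisibility $n\mid d$ completely transparent, at the cost of invoking the (standard but nontrivial) fact that maximal isotropics for a nondegenerate alternating form on a finite abelian group have order the square root.

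There is, however, a concrete gap in your converse step. You assert that every order-$n$ subgroup of $(\zz/n)^2$ is the kernel of a surjection $(\zz/n)^2\to\zz/n$; this is false. For $n=4$, the subgroup $H=\langle(2,0),(0,2)\rangle$ has order $4$, but any surjection $(\zz/4)^2\to\zz/4$ is $(a,b)\mapsto\alpha a+\beta b$ with at least one of $\alpha,\beta$ a unit, and $H$ is not such a kernel. The conclusion you need is nonetheless true: since $\bar\Lambda'\supset\mathrm{rad}(\bar E)=\langle\bar e_2,\bar f_2\rangle$ and $\Lambda'\supset n\Lambda$, you have $\Lambda'\supset\langle e_2,f_2\rangle$, so $L:=\Lambda'/\langle e_2,f_2\rangle$ is a sublattice of index $n$ in $\zz^2=\langle e_1,f_1\rangle$; choosing a basis $v_1,v_2$ of $L$ gives $E(v_1,v_2)=\pm n$, whence $E(L,L)=n\zz$ and $\bar\Lambda'/\mathrm{rad}$ is isotropic. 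This is essentially the $\zz^2$-case of the paper's inequality (with equality), and replaces your Bezout sentence.
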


\begin{proof} 
The first condition is equivalent to $\smash{ \theta^*\beta \in (\deg \theta) \mathrm{NS}(V') }$ because the composition $\smash{ \mathrm{NS}(V) \xrightarrow{\theta^*} \mathrm{NS}(V') \xrightarrow{\theta_*} \mathrm{NS}(V)}$ is $(\deg \theta) \mathrm{id}_{\mathrm{NS}(V)}$, and $\theta_* \otimes \mathrm{id}_{\mathbb Q}$ is an isomorphism. Thanks to the Lefschetz principle, we may safely assume that the ground field is $\cc$. Let us uniformize our abelian surfaces $V = \cc^2/\Lambda$ and $V' = \cc^2/ \Lambda'$, let $k:= \deg \theta = [\Lambda:\Lambda']$, and let the polarization on $V$ correspond to the type $(1,d)$ alternating bilinear form $\alpha : \Lambda \times \Lambda \to {\mathbb Z}$. Conditions (1) and (2) are equivalent to
\begin{enumerate} 
\item[$(1')$] $\alpha(\Lambda' \times \Lambda') \subseteq k {\mathbb Z}$; and respectively 
\item[$(2')$] $k|d$ and $\{x \in \Lambda: \alpha(x,y) \in d{\mathbb Z},\text{ for all } y \in \Lambda\} \subseteq \Lambda'$. 
\end{enumerate}
$(1) \Leftrightarrow (1')$ follows from the first sentence in this proof and the fact that the canonical identifications
$\smash{ \hh^2(V,\zz) \cong \mathrm{Hom}({\textstyle \bigwedge^2}\Lambda,\zz ) }$ and $\smash{ \hh^2(V',\zz) \cong \mathrm{Hom}({\textstyle \bigwedge^2}\Lambda',\zz ) }$ are compatible with pullbacks. $(2) \Leftrightarrow (2')$ is straightforward from the definition of $\mu$. 

The proof that $(1') \Leftrightarrow (2')$ is an elementary, yet not utterly trivial, exercise in lattice theory. Let $b_1,b_2,b_3,b_4$ be a symplectic basis of $\Lambda$, that is, a basis in which 
$\alpha$ is given by the block matrix $ \begin{bmatrix} 0 & D \\ -D & 0 \end{bmatrix}$, where $ D=\begin{bmatrix} 1 & 0 \\ 0 & d \end{bmatrix}$, 
and let $A = \langle b_1,b_3 \rangle$ and $B = \langle b_2,b_4 \rangle$. Note that $ \{x \in \Lambda: \alpha(x,y) \in d{\mathbb Z},\text{ for all } y \in \Lambda\} = dA \oplus B$. 
The implication $(2') \Rightarrow (1')$ is easy because $\Lambda' \supset B$ implies that $\Lambda' = A' \oplus B$ for some $A' \subseteq A$, and then everything clear. The key to $(1') \Rightarrow (2')$ is that the reverse inclusion of $(1')$ holds regardless. In general, if $\phi:L \times L \to \zz$ is an alternating bilinear form on a lattice $L$, and $L' \subset L$ is a sublattice of finite index, then 
\begin{equation}\label{eqn: lattices divisibility} [L:L'] \phi(L \times L) \subseteq \phi(L' \times L'), \end{equation}
and if equality occurs, then equality also occurs in the analogues of \eqref{eqn: lattices divisibility} for all lattices $L''$ such that $L' \subset L'' \subset L$.
This can be proved inductively using `Jordan-H\"{o}lder' filtrations $L' = L_0 \subset L_1 \subset \cdots \subset L_m = L$. Assuming that $(1')$ holds, the general observation above shows that there exists no $B' \subsetneq B$ such that $\Lambda' \subseteq A \oplus B'$. Hence $\Lambda' \supset B$, and again everything is clear. 

For the final claim, $\beta'$ is ample by the Nakai-Moishezon criterion and the push-pull formula, $\beta'$ is primitive since $\beta$ is primitive and $\theta_* \beta' = \beta$, and the type can be deduced by computing $\beta'^2 = 2d/\deg \theta$. (To shed some light on this rather opaque calculation, we mention that the polarization $\mu$ is the is the $\theta^\vee$-pullback of the polarization $\mu':V'^\vee \to V'$ on $V'^\vee$ induced by the one on $V'$. Then it follows from \cite[last formula on p. 143]{[Mu85]} that $\mu = \theta \mu' \theta^\vee$. Remark \ref{rmk: isogenies on product surfaces} should further demystify Lemma \ref{lem: pure isogeny calculation}.)
\end{proof}

Let $M$ be a smooth connected complex quasi-projective variety (in \S\ref{ssec: Severi stacks} we will make a more specific choice for $M$, but for now this is unimportant), and $V_M \to M$ an abelian scheme of relative dimension $2$ (a family of abelian surfaces), with a type $(1,d)$ polarization $\lambda_M:V_M \to V_M^\vee$ induced by an invertible sheaf ${\sh L}_M \in \mathrm{Pic}(V_M)$, and a canonical level structure given by $d^2$ sections $M \to V_M$, which thus give in each geometric fiber an identification of the kernel of $\lambda_M$ with $({\mathbb Z}/d)^2$. Let $\mu_M:V_M^\vee \to V_M$ be the $(1,d)$-polarization induced on the dual. As usual, we write $V_S = S \times_M V_M$, $\lambda_S = \mathrm{id}_S \times_M \lambda$, etc.

The set of all factorizations through isogenies $\smash{ V_M^\vee \to V' \xrightarrow{\theta} V_M }$ of $\mu_M$ up to the obvious notion of isomorphism is canonically identified with the set of subgroups of $({\mathbb Z}/d)^2$, and naturally ordered by reverse inclusion. For instance, $\smash{ V_M^\vee = V_M^\vee \to V_M }$ is the maximal factorization. Let $g>1$. It is convenient to consider the functors
$$ \underline{\Sigma}:\mathrm{Sch}^\mathrm{op}_M \longrightarrow \mathrm{Set} \quad \text{and} \quad  \underline{\Sigma}_g:\mathrm{Sch}^\mathrm{op}_M \longrightarrow \mathrm{Set} $$
which respectively associate to any $S \to M$ the sets of factorizations $\smash{ V_S^\vee \to V' \xrightarrow{\theta} V_S }$ of $\mu_S$ such that $\deg \theta$ divides $d$ (at all points of $S$), respectively $\deg \theta$ divides $d$ and $\deg \theta \leq \frac{d}{g-1}$. By abuse of language, we will often refer to $\smash{ V_S^\vee \to V' \xrightarrow{\theta} V_S }$ as simply $\theta$. Let $\smash{ \Sigma = \underline{\Sigma}(M) }$ and $\smash{\Sigma_g = \underline{\Sigma}_g(M) }$. 

\begin{rmk}\label{rmk: connect to intro isogenies}
Since $V_M$ has a canonical level structure and $M$ is connected, we have natural identifications $\smash{\underline{\Sigma}(\overline{s}) = \underline{\Sigma}(M) }$ and $\smash{\underline{\Sigma}_g(\overline{s}) = \underline{\Sigma}_g(M) }$, for all geometric points $\overline{s} \to M$. In particular, the definition $\smash{\Sigma_g = \underline{\Sigma}_g(M) }$ is consistent with the one used in the introduction.
\end{rmk}
 
Fix $3 \leq g \leq d+1$, and let
$\smash{ \overline{\modu M}_{g,n}(V_M/M,\beta) }$
be the relative moduli stack of genus $g$, $n$-marked stable maps of primitive class.  A stable map $(C,f,x_1,\ldots,x_n)$ over $S \to M$ is `of primitive class' if, for any geometric point $\overline{s} \to S$ and any ${\sh J} \in \mathrm{Pic}(V_{\overline{s}})$, we have $\deg f_{\overline{s}}^* {\sh J} = ({\sh J} \cdot {\sh L}_{\overline{s}})_{V_{\overline{s}}}$. We write $\smash{ \overline{\modu M}_{g,n}(V_M/M,\beta)(S) }$ for the groupoid of stable maps over $S \to M$. By Lemma \ref{lem: pure isogeny calculation}, a similar discussion holds on $V'$, for any $\smash{ (V_M^\vee \to V' \xrightarrow{\theta} V_M) \in \Sigma }$. We denote the primitive class on $V'$ by $\beta' = \beta'(\theta)$, and we thus have $\theta_*\beta' = \beta$. 

\begin{lem}\label{lem: possible factorizations}
Let $V = V_{\overline{s}}$ be a geometric fiber of $V_M \to M$, and $(C,f,x_1,\ldots,x_n)$ a stable map in $\overline{\modu M}_{g,n}(V,\beta_{\overline{s}})(\overline{s})$. 
\begin{enumerate}
\item If $f$ factors through an isogeny $\theta: V' \to V$, then $\smash{ \theta \in \underline{\Sigma}(\overline{s}) }$. If, in addition, $C$ is irreducible, then $\smash{ \theta \in \underline{\Sigma}_g(\overline{s}) }$.
\item The subset $\smash{ \{\theta \in \underline{\Sigma}(\overline{s}) = \Sigma: \text{ $f$ factors through $\theta$} \} }$ of $\Sigma$ has a maximum. 
\end{enumerate}
We call the maximum above the \emph{maximal factorization of $f$}. 
\end{lem}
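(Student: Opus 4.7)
My plan for part (1) is to reduce directly to Lemma \ref{lem: pure isogeny calculation}. Writing $f = \theta \circ f'$ with $f' \colon C \to V'$, the class $\beta' := f'_*[C] \in \mathrm{NS}(V')$ satisfies $\theta_* \beta' = f_*[C] = \beta$, so the implication $(1) \Rightarrow (2)$ of Lemma \ref{lem: pure isogeny calculation} gives $\deg \theta \mid d$ and $\mu$ factoring through $\theta$; hence $\theta \in \underline{\Sigma}(\overline{s}) = \Sigma$. If in addition $C$ is irreducible, the final sentence of Lemma \ref{lem: pure isogeny calculation} furnishes that $\beta'$ is primitive of type $(1,d')$ with $d = d' \deg \theta$, so $f'$ is non-constant and in fact birational onto its image $Y' \subset V'$ (as $f'_*[C] = \deg(f'|_{Y'}) \cdot [Y']$ is primitive). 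Adjunction on $V'$ gives $p_a(Y') = 1 + d'$, whereas the standard short exact sequence $0 \to \mathcal O_{Y'} \to f'_* \mathcal O_C \to \mathcal Q \to 0$ (with $\mathcal Q$ supported on the nonisomorphism locus) yields $p_a(Y') \geq p_a(C) = g$. Thus $d' \geq g - 1$, i.e., $\deg \theta \leq d/(g-1)$, placing $\theta$ in $\Sigma_g$.

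For part (2), I plan to prove that the set $S \subset \Sigma$ of factorizations through which $f$ factors is finite, nonempty (it contains $\theta = \mathrm{id}_V$), and closed under the natural binary join $\vee$; then $\bigvee_{\theta \in S} \theta$ is the maximum. Using the identification between factorizations and subgroups $K \subset \ker \mu \cong (\mathbb Z/d)^2$ --- under which $\vee$ corresponds to $K_1 \cap K_2$ --- given factorizations of $f$ through $\theta_{K_1}, \theta_{K_2}$ with lifts $f_i \colon C \to V^\vee/K_i$, I would form the fiber product group scheme $W := V^\vee/K_1 \times_V V^\vee/K_2$ and identify its identity component $W^0$ with $V^\vee/(K_1 \cap K_2)$ via the diagonal embedding. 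Since $C$ is connected, the morphism $(f_1, f_2) \colon C \to W$ lands in a single connected component $W^0 + (a,b)$.

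The main subtlety I expect is that after translating by $-(a,b)$, the resulting morphism $g \colon C \to V^\vee/(K_1 \cap K_2)$ only satisfies $\theta_{K_1 \cap K_2} \circ g = f - \theta_{K_1}(a)$, rather than lifting $f$ itself. To promote this to an honest factorization of $f$, I would use surjectivity of $\theta_{K_1 \cap K_2}$ to choose $w \in V^\vee/(K_1 \cap K_2)$ with $\theta_{K_1 \cap K_2}(w) = \theta_{K_1}(a)$, and verify that $g + w$ factors $f$ through $\theta_{K_1 \cap K_2}$. Part (1) then certifies automatically that $\theta_{K_1 \cap K_2}$ lies in $\Sigma$, so $S$ is closed under $\vee$, completing both parts of the lemma.
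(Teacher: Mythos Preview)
Your proof is correct and follows essentially the same approach as the paper for both parts: part (1) is identical (reduce to Lemma~\ref{lem: pure isogeny calculation}, then genus--degree), and part (2) uses the same fiber-product idea. One simplification worth noting for part (2): your translation subtlety disappears if, instead of translating back to the identity component $W^0$, you work directly with the connected component $W_0$ containing the image of $(f_1,f_2)$---equip $W_0$ with any origin lying over $0_V$, so that $W_0 \to V$ is an isogeny through which $f$ factors via $(f_1,f_2)$ with no correction needed; part (1) then places it in $\Sigma$. This is the paper's one-line version of the argument.
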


\begin{proof}
Let $f':C \to V'$ such that $f = \theta f'$. Since $\theta_* f'_*[C] = f_*[C]$, the first part of (1) follows from Lemma \ref{lem: pure isogeny calculation}. If $C$ is irreducible, then $f$ must be birational onto its image because the polarization is primitive, and hence so must $f'$. Then
$$ g = p_a(C) \leq p_a(\beta') = d'+1 = \frac{d}{\deg \theta} + 1, \quad \text{so} \quad \deg \theta \leq \frac{d}{g-1} $$
by the genus-degree formula and Lemma \ref{lem: pure isogeny calculation}. If $f$ factors through isogenies $V' \to V$ and $V'' \to V$, then it also factors through (the restriction to a connected component of the source of) $V' \times_V V'' \to V$, which must also be an element of $\smash{ \underline{\Sigma}(\overline{s}) }$ by part (1), so the maximum in part (2) exists. 
\end{proof}

We may now state the main result of \S\ref{ssec: factorization through isogenies}.

\begin{prop}\label{prop: total decomposition}
There exists a decomposition into open and closed substacks
\begin{equation}\label{eqn: total decomposition}
 \overline{\modu M}_{g,n}(V_M/M,\beta) = \bigsqcup_{\theta \in \Sigma}  \overline{\modu M}_{g,n}(V_M/M,\beta;\theta),
\end{equation} 
such that a stable map $(C,f,x_1,\ldots,x_n)$ in $\overline{\modu M}_{g,n}(V_M/M,\beta)(S)$ is an object of $\overline{\modu M}_{g,n}(V_M/M,\beta;\theta)(S)$ if and only if, for any geometric point $\overline{s} \to S$, the maximal factorization of $f_{\overline{s}}$ in the sense of Lemma \ref{lem: possible factorizations} is $\theta \in \underline{\Sigma}(\overline{s})$. In this case, $f$ also factors through $\smash{ \theta \in \underline{\Sigma}(S) \supseteq \Sigma }$. 
\end{prop}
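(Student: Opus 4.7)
My plan is to first define, for each $\theta \in \Sigma$, the substack $Z_\theta \subset \overline{\modu M}_{g,n}(V_M/M,\beta)$ whose $S$-points are stable maps admitting a lift $g: C \to V'_S$ with $\theta_S g = f$, prove each $Z_\theta$ is clopen, and then set
\[
\overline{\modu M}_{g,n}(V_M/M,\beta;\theta) := Z_\theta \setminus \bigcup_{\theta' > \theta} Z_{\theta'}.
\]
Since $\Sigma$ is finite and every stable map over a geometric point has a unique maximal factorization (Lemma \ref{lem: possible factorizations}(2)), these strata are themselves clopen and partition $\overline{\modu M}_{g,n}(V_M/M,\beta)$ on geometric points, yielding \eqref{eqn: total decomposition}.

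The key object is the finite étale cover $\pi: \tilde C := C \times_{V_S,\theta} V'_S \to C$; a lift $g$ of $f$ corresponds bijectively to a section of $\pi$. To test clopenness I fix a stable map $S \to \overline{\modu M}_{g,n}(V_M/M,\beta)$ and analyze the preimage of $Z_\theta$ in $S$. For openness: since $\theta$ is étale we have $\Omega^1_{V'/V} = 0$, so sections lift uniquely to infinitesimal thickenings, giving formal openness and — by algebraicity of the moduli stack — Zariski openness. For closedness: $\tilde C \to S$ is proper and flat with geometrically reduced fibers, so its Stein factorization $\tilde C \to T \to S$ has $T \to S$ finite; the locus $T^{(1)} \subset T$ parametrizing degree-one connected components of $\tilde C$ over $C$ is clopen in $T$ and finite over $S$, and its image in $S$ — precisely the preimage of $Z_\theta$ — is closed as the image of a proper morphism. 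Equivalently, a valuative-criterion argument works: in a DVR family, the closure in $\tilde C$ of a generic-fiber section is clopen (a connected component of $\tilde C$ extending the section), and projects isomorphically onto $C_R$ by étaleness of $\pi$.

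For the final clause — that an object of $\overline{\modu M}_{g,n}(V_M/M,\beta;\theta)(S)$ admits a factorization through $\theta$ over all of $S$ — I would use that $T^{(1)} \to Z_\theta$ is a $\ker(\theta)_S$-torsor, with $\ker(\theta)$ trivialized as a constant group scheme by the canonical level structure on $V_M$, and argue that this torsor splits over $S$; this should follow from a pullback-of-$K$-torsor analysis for $f:C \to V$ combined with the rigidity imposed by primitivity of $\beta$ (and hence $\beta'(\theta)$ on $V'$). The main obstacle I anticipate is the closedness step when source curves carry persistent nodes making $C_R$ non-smooth in codimension one; but since $\pi$ is étale, $\tilde C$ inherits exactly the singularity structure of $C$, so the Stein factorization and closure-of-section arguments go through without modification, the only genuine point being that $\tilde C \to S$ remains cohomologically flat and its relative $\pi_0$ is represented by a finite $S$-scheme.
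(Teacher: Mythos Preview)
Your overall strategy matches the paper's: define, for each $\theta$, the locus of maps factoring through $\theta$, show it is open and closed, then stratify by taking successive differences. The difference is in how you establish clopenness.

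The paper does not work with the fibre product $\tilde C = C\times_{V_S}V'_S$ directly. Instead it introduces the $1$-morphism
\[
\tau = \tau(\theta): \overline{\modu M}_{g,n}(V'/M,\beta') \longrightarrow \overline{\modu M}_{g,n}(V_M/M,\beta)
\]
given by postcomposition with $\theta$, and proves that $\tau$ is proper and \'etale. Properness is free, since both source and target are proper over $M$; \'etaleness is checked formally, by exactly the infinitesimal-lifting argument you sketch (your observation that $\Omega^1_{V'/V}=0$ is the content of the paper's diagram chase). The image of a proper \'etale morphism of Deligne--Mumford stacks is open and closed, and this image is precisely your $Z_\theta$. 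So the paper gets closedness without Stein factorization, cohomological flatness, or representability of relative $\pi_0$; these are genuine technical savings.

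What the paper's formulation also buys is a cleaner route to the final clause. Because ${\modu F}_\theta$ is by construction the image of $\tau$, the ``latter description'' in the paper's proof characterizes $S$-points of the stratum directly in terms of a factorization of $f$ over $S$, so the final assertion is immediate from that description. Your torsor argument is aimed at the same target but, as you acknowledge, is only a sketch; in particular, the claim that the $\ker(\theta)$-torsor of lifts splits over $S$ does not follow from primitivity of $\beta$ alone, and you would need to use more carefully that the torsor $\tilde C \to C$ is pulled back from $V'_S\to V_S$ together with the fibrewise triviality hypothesis.

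In summary: your proof is correct in outline and close in spirit, but the paper's device of working with $\tau$ between moduli stacks, rather than with $\tilde C$ over a test base $S$, makes both the closedness step and the final factorization-over-$S$ claim substantially shorter.
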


(If $S$ is of finite type over $\cc$, then it obviously suffices to impose the condition above only on $\cc$-points.)

\begin{proof}
Let $\theta \in \Sigma$. Note that the composition of any stable map to $V'$ with $\theta$ is a stable map into $V_M$, so we obtain a $1$-morphism of Deligne-Mumford stacks 
$$ \tau = \tau(\theta):\overline{\modu M}_{g,n}(V'/M,\beta') \longrightarrow \overline{\modu M}_{g,n}(V_M/M,\beta), $$
where $\beta' = \beta'(\theta)$. We claim that $\tau$ is proper and \'{e}tale. It suffices to check that it is formally \'{e}tale -- everything else is clear, as both the source and the target are proper over $M$. Let $j:T \hookrightarrow S$ be a closed immersion of $M$-schemes induced by a nilpotent sheaf of ideals on $S$, and consider a (solid arrow) commutative diagram
\begin{center}
\begin{tikzpicture}
\matrix [column sep  = 10mm, row sep = 5mm] {
	\node (nw) {$T$}; &
	\node (ne) {$\overline{\modu M}_{g,n}(V'/M,\beta')$};  \\
	\node (sw) {$S$}; &
	\node (se) {$\overline{\modu M}_{g,n}(V_M/M,\beta)$.}; \\
};
\draw[->, thin] (nw) -- (ne);
\draw[->, thin] (ne) -- (se);
\draw[right hook->, thin] (nw) -- (sw);
\draw[->, thin] (sw) -- (se);
\draw[->, dashed, thin] (sw) -- (ne);
\node at (1,0) {$\tau$};
\end{tikzpicture}
\end{center}
We have to show that there exists a unique dotted arrow which makes the diagram commute. We have stable maps $f':C_T \to V'_T$ and $f:C \to V_S$ over $T$ and $S$ respectively, with markings $x_i:S \to C$ and $x_i':T \to C_T$, $i=1,\ldots,n$, such that 
$$ (\theta \times_M j) \circ f'= f|{C_T} \quad \text{and} \quad x_i|T=x'_i. $$ 
Then our task is to prove that there exists a unique dotted arrow such that
\begin{center}
\begin{tikzpicture}
\matrix [column sep  = 10mm, row sep = 5mm] {
	\node (n) {$V'_T$}; &
	\node (ne) {$V'_S$}; &
	\node (se) {$V_S$}; \\
	\node (nw) {$C_T$}; & &
	\node (sw) {$C$}; \\
};

\draw[->, thin] (nw) -- (n);
\draw[right hook->, thin] (n) -- (ne);
\draw[->, thin] (ne) -- (se);
\draw[right hook->, thin] (nw) -- (sw);
\draw[->, thin] (sw) -- (se);
\draw[->, dashed, thin] (sw) -- (ne);
\node at (0.95,0.75) {$\theta_S$};
\node at (1.9,0) {$f$};
\node at (-1.95,0) {$f'$};
\end{tikzpicture}
\end{center}
commutes (the part involving the markings is a tautology thus skipped), which follows from the fact that $\theta_S: V'_S \to V_S$ is formally \'{e}tale. We explain the reformulation of the task in more detail. Trivially, the bottom right triangle in the former diagram commutes if and only if the top right triangle in the latter diagram commutes. Slightly less trivially, the top left triangle in the former diagram commutes if and only if the square (trapezoid) in the latter diagram commutes. A priori, the equivalence is with the square being cartesian; however, if the square commutes, then it is a fortiori cartesian. Indeed, the square induces a morphism $C_T \to V'_T \times_{V'_S} C \cong C_T$ compatible with $C_T \hookrightarrow C$, but there are no nontrivial endomorphisms of $C_T$ over $C$, so $C_T \to C_T$ must have been the identity.

It follows that the map $|\overline{\modu M}_{g,n}(V'/M,\beta')| \to |\overline{\modu M}_{g,n}(V_M/M,\beta)|$ induced by $\tau$ on the underlying topological spaces is both open and closed. (It is well-known that \'{e}tale morphisms of schemes are open \cite[\href{https://stacks.math.columbia.edu/tag/03WT}{Tag 03WT}]{[stacks]}; the reductions needed to use this for our Deligne-Mumford stacks require only the definition of the associated topological space \cite[\href{https://stacks.math.columbia.edu/tag/04XI}{Tag 04XI}]{[stacks]}.) In particular, the image of this map is open and closed, thus corresponds to an open and closed substack of $\overline{\modu M}_{g,n}(V_M/M,\beta)$, which we denote $\smash{{\modu F}_{\theta}}$. Trivially, $\smash{ |{\modu F}_{\theta'}| \subseteq |{\modu F}_{\theta}| }$ if $\theta' > \theta$. There exists a unique open and closed substack $\overline{\modu M}_{g,n}(V_M/M,\beta;\theta)$ of $\overline{\modu M}_{g,n}(V_M/M,\beta)$ such that 
$$  |\overline{\modu M}_{g,n}(V_M/M,\beta;\theta)| = |{\modu F}_{\theta}| \backslash \bigcup_{\theta'>\theta} |{\modu F}_{\theta'}|, $$
or equivalently, $(C,f,x_1,\ldots,x_n)$ is in $\overline{\modu M}_{g,n}(V_M/M,\beta;\theta)(S)$ if and only if $f$ factors through $\theta$, seen as an element of $\smash{\underline{\Sigma}(S) }$, and $f$ does not factor through $\theta'$, for any $\theta' > \theta$, and neither does any pullback of $f$ along any morphism $S' \to S$. The former description and Lemma \ref{lem: possible factorizations} prove the defining property of $\overline{\modu M}_{g,n}(V_M/M,\beta;\theta)$ required in the statement of the proposition, while the latter description trivially implies the latter claim. 
\end{proof}

\begin{rmk}\label{rmk: compatible with forgetful} The case $n>0$ will be used only once, in the proof of Proposition \ref{prop: flexibility}. We will implicitly use the fact that the decompositions \eqref{eqn: total decomposition} are compatible with the $1$-morphisms $\overline{\modu M}_{g,n}(V_M/M,\beta) \to \overline{\modu M}_{g,m}(V_M/M,\beta)$ which forget some or all of the marked points, $n > m$. (Recall that these forgetful morphisms can be constructed based on \cite[Corollary 4.6]{[BM96]}, and no substantial modifications are required in a relative setting.) \end{rmk}

\subsubsection{Factorizations through isogenies of elliptic curves}\label{sss: factorizations through isogenies of elliptic curves} An analogous, but much less involved discussion holds if we replace the abelian surfaces with elliptic curves. Let $E$ be a complex projective smooth genus $1$ curve, and $\smash{ \overline{\modu M}_{g,n}(E,d) }$ the moduli space of $n$-marked degree $d$ genus $g$ stable maps. By arguments similar to those in the proof of Proposition \ref{prop: total decomposition}, we have a decomposition
\begin{equation}\label{eqn: total decomposition elliptic curve}
 \overline{\modu M}_{g,n}(E,d) = \bigsqcup_{\deg \theta | d} \overline{\modu M}_{g,n}(E,d;\theta) 
\end{equation}
over isogenies $\theta:E' \to E$ such that $\deg \theta | d$, such that a stable map in $\smash{\overline{\modu M}_{g,n}(E,d)}$ belongs to $\overline{\modu M}_{g,n}(E,d;\theta)$ if and only if all its geometric fibers factor through $\theta_{\overline{K}}$, but not through any higher degree isogeny. Note that isogenies $\theta:E' \to E$ such that $\deg \theta | d$ correspond to sublattices $\Lambda \subseteq \mathrm{H}_1(E,\zz)$ such that $[\mathrm{H}_1(E,\zz): \Lambda] | d$, and we will often write $\smash{\overline{\modu M}_{g,n}(E,d;\Lambda) }$ instead of $\smash{\overline{\modu M}_{g,n}(E,d;\theta) }$. The analogue of Remark \ref{rmk: compatible with forgetful} regarding forgetting the markings still applies. 

\subsection{The moduli stacks}\label{ssec: Severi stacks} We begin by making a more convenient choice for the base $M$ from \S\ref{ssec: factorization through isogenies}. Let ${\modu A}^{\mathrm{lev}}_{1,d}$ be moduli space of $(1,d)$-polarized abelian surfaces with a canonical level structure. Let $M$ be a smooth, connected, quasi-projective threefold, with a (smooth, nonempty) divisor $\partial M \subset M$, such that the $1$-morphism
$ M \to {\modu A}^{\mathrm{lev}}_{1,d} $
induced by $V_M \to M$ is \'{e}tale, and moreover
\begin{enumerate} 
\item the points $p \in M\backslash \partial M(\cc)$ correspond to abelian surfaces $V_p$ for which the polarization is not the sum of two nonzero effective classes in ${\mathrm{NS}}(V_p)$.
\item the points $p \in \partial M(\cc)$ correspond to products of elliptic curves, that is, $V_p = E \times F$ where $E$ and $F$ are elliptic curves, and $\lambda_p = (d\mathrm{id}_E,\mathrm{id}_F)$ or equivalently $\beta_p = E \times \{[pt]\} + d \{[pt]\} \times F \in \hh_2(E \times F,\zz)$. 
\end{enumerate} 
The fact that such $M$ and $\partial M$ exist follows from standard abelian surface theory, namely, from the fact that ${\modu A}^{\mathrm{lev}}_{1,d}$ is a smooth, separated, finite type Deligne-Mumford stack over $\cc$ of dimension $3$, and from standard facts on Noether-Lefschetz loci \cite{[Vo13]} of abelian surfaces, which also follow from the complex analytic theory. 

\begin{rmk}\label{rmk: isogenies on product surfaces}
There is a very concrete description of $\smash{\underline{\Sigma}(p)}$ and $\smash{\underline{\Sigma}_g(p)}$ at points $p \in \partial M(\cc)$. Let $V_p = E \times F$, such that
$\lambda_p = (d\mathrm{id}_E,\mathrm{id}_F)$ and $\mu_p = (\mathrm{id}_E,d\mathrm{id}_F)$. Then $\smash{\underline{\Sigma}(p)}$ (respectively $\smash{\underline{\Sigma}_g(p)}$) is the set of isogenies of the form $\theta =(\mathrm{id}_E,\theta_F) : E \times F' \to E \times F$ such that $\deg \theta_F | d$ (respectively $\deg \theta_F | d$ and $\smash{ \deg \theta_F \leq \frac{d}{g-1} }$), and is naturally identified with the set of sublattices of $\mathrm{H}_1(F,\zz) \approx \zz^2$ whose index divides $d$ (respectively divides $d$ and doesn't exceed $\smash{ \frac{d}{g-1} }$). Indeed, any sublattice whose index divides $d$ necessarily contains $d\mathrm{H}_1(F,\zz)$. 
\end{rmk}

We now introduce two open substacks of $\overline{\modu M}_{g,n}(V_M/M,\beta)$. Let ${\modu T}_{g,n}(V_M/M,\beta)$ be the open substack of $\overline{\modu M}_{g,n}(V_M/M,\beta)$ consisting of stable maps with sources of compact type. Let ${\modu V}_{g,n}(V_M/M,\beta)$ be the open substack of ${\modu T}_{g,n}(V_M/M,\beta)$ consisting of unramified stable maps with sources of compact type. As customary, when $n=0$, we suppress this index. By \eqref{eqn: total decomposition}, we have decompositions
\begin{equation}\label{eqn: total decomposition X}
{\modu X}_{g,n}(V_M/M,\beta) = \bigsqcup_{\theta \in \Sigma}  {\modu X}_{g,n}(V_M/M,\beta;\theta), \quad \text{for all symbols ${\modu X}={\modu T},{\modu V},\overline{\modu M}$.}
\end{equation} 
We will see later \eqref{eqn: total decomposition V} that for ${\modu X} = {\modu V}$, $\Sigma$ can (and should) be replaced with $\Sigma_g$. 

\begin{rmk}\label{rmk: connect to intro}
All stable maps in ${\modu V}_g(V_M/M,\beta)_{M \backslash \partial M}$ have smooth sources and (their geometric fibers) are birational onto their images. In particular, the fibers of the moduli spaces ${\modu V}_g(V_M/M,\beta)$ and ${\modu V}_g(V_M/M,\beta;\theta)$ over points $p \in M \backslash \partial M({\mathbb C})$ are precisely the spaces in Theorem \ref{thm: main theorem}, by Remark \ref{rmk: connect to intro isogenies}.
\end{rmk}

Throughout much of the paper, we will be working in the following situation.

\begin{sit}\label{sit: main situation}
Fix $3 \leq g \leq d+1$, and $\theta \in \Sigma_g$. Let $\tau:\Delta \to M$ be a morphism from a nonsingular connected quasi-projective curve $\Delta$, such that $\tau^{-1}(\partial M)$ is \emph{set-theoretically} a single point $o \in \Delta(\cc)$. Let $\Delta^* = \Delta \backslash \{o\}$.
\end{sit}

In Situation \ref{sit: main situation}, let $\Delta^* = \Delta \backslash \{o\}$, $V = \Delta \times_M V_M$, ${\sh L}$ the pullback of ${\sh L}_M$ to $V$, and $V_o = E \times F$, with ${\sh L}_o = {\sh J}_E \boxtimes {\sh J}_F$ with $\deg {\sh J}_E = d$ and $\deg {\sh J}_F = 1$. Let 
$$ {\modu X} = \Delta \times_M {\modu X}_g(V_M/M,\beta;\theta) \quad \text{and} \quad {\modu X}^*=\Delta^* \times_\Delta {\modu X}, \quad \text{for all ${\modu X}={\modu T},{\modu V},\overline{\modu M}$.} $$
By Remarks \ref{rmk: connect to intro isogenies} and \ref{rmk: isogenies on product surfaces}, $\theta \in \Sigma_g$ corresponds to a sublattice $\Lambda[\theta] \subset \mathrm{H}_1(F,\zz)$. 

\begin{prop}\label{prop: relatively smooth}
The projection ${\modu V} \to \Delta$ is smooth of relative dimension $g$. In particular, ${\modu V}$ is smooth of dimension $g+1$. 
\end{prop}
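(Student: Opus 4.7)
The plan is to prove that ${\modu V} \to \Delta$ is smooth of relative dimension $g$ at each geometric point $(C, f) \in {\modu V}(\cc)$ by analyzing the deformation theory of the stable map. Since $V_s$ is an abelian surface, $T_{V_s}$ is trivial of rank $2$, so $f^*T_{V_s} \cong {\modu O}_C^{\oplus 2}$. Because $f$ is unramified and $C$ is Gorenstein (being of compact type), the differential $df: T_C \to f^*T_{V_s}$ is injective with locally free rank-$1$ cokernel $N_f$; taking determinants in the short exact sequence
$$0 \to T_C \to f^* T_{V_s} \to N_f \to 0$$
identifies $N_f \cong \omega_C$. Hence $h^0(N_f) = g$ matches the expected dimension of the relative tangent space at $(C,f)$, while $h^1(N_f) = 1$ is, a priori, the relative obstruction space.

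The crux is to kill this $1$-dimensional obstruction, i.e., to show that ${\modu V} \to \Delta$ is unobstructed. I would invoke the semiregularity-based reduction of obstructions in the Kool-Thomas formulation \cite{[KT14]} cited in the introduction. The decisive input is that the family $V \to \Delta$ carries the pullback of the polarization ${\sh L}_M$, so the class $\beta$ is a flat $(1,1)$-section over all of $\Delta$. Under a first-order deformation of $V_s$ coming from $\Delta$, the semiregularity map
$$H^1(C, N_f) \longrightarrow H^2(V_s, {\modu O}_{V_s})$$
sends the obstruction for deforming $f$ to the obstruction for $\beta$ to remain of type $(1,1)$, which vanishes by the existence of ${\sh L}_M$ on the total family. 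Both sides of this map are $1$-dimensional and the map is nonzero (essentially because $f_*[C]$ is a nonzero primitive class pairing nontrivially with the $(2,0)$-form of $V_s$), hence it is an isomorphism, and the obstruction itself vanishes.

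Having verified unobstructedness and the relative tangent dimension $g$, I conclude that ${\modu V} \to \Delta$ is smooth of relative dimension $g$ at every geometric point, hence smooth. The second assertion that ${\modu V}$ is smooth of dimension $g+1$ follows from smoothness of $\Delta$ of dimension $1$. The main obstacle is step two: formally invoking the Kool-Thomas / semiregularity machinery in the context of unramified stable maps from (possibly reducible) compact-type sources into families of polarized abelian surfaces, so that the presence of the relative polarization ${\sh L}_M$ genuinely translates into the vanishing of the a priori $1$-dimensional obstruction. The normal-sheaf and tangent-space computations, and the passage from pointwise to global smoothness, are routine.
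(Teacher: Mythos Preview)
Your proposal is correct and follows essentially the same approach as the paper: compute the normal sheaf $N_f \cong \omega_C$ to get a $g$-dimensional relative tangent space, then invoke the Kool--Thomas reduced obstruction theory \cite{[KT14]} to kill the remaining $1$-dimensional obstruction via semiregularity. The only cosmetic difference is that the paper works on the cotangent side (the exact sequence $0 \to {\sh H}\!om(\omega_C,f^*\omega_X) \to f^*\Omega_X \to \Omega_C \to 0$) rather than with $T_C$, which is slightly cleaner at the nodes of $C$, and phrases the final step as ``tangent dimension equals virtual dimension, hence reduced obstruction vanishes'' rather than arguing directly that the semiregularity map is an isomorphism; these amount to the same thing.
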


\begin{proof}
We first review some generalities. Let $f:C \to X$ be an unramified stable map (over $\spec \cc$) to a smooth projective surface $X$. We review the fact that the space of first order deformations of $f$ is $\mathrm{Hom}(f^*\omega_X,\omega_C)$. In general, that is, without the `unramified' hypothesis, this space is $\smash{ \mathbb{E}\mathrm{xt}^0(f^*\Omega_X \to \Omega_C,{\sh O}_C) }$ with $f^*\Omega_X$ in degree $0$ \cite{[Be97], [BF97]}. However, if $f$ is unramified, it simplifies to $\mathrm{Hom}(f^*\omega_X,\omega_C)$, in light of the short exact sequence
\begin{equation}\label{eqn: mysterious ses} 
0 \longrightarrow {\sh H}\!om(\omega_C,f^*\omega_X) \longrightarrow f^*\Omega_X \longrightarrow \Omega_C \longrightarrow 0. 
\end{equation}
To justify \eqref{eqn: mysterious ses}, we first note that $f^*\Omega_X \to \Omega_C$ is surjective with invertible kernel. Indeed, \cite[\href{https://stacks.math.columbia.edu/tag/04HI}{Tag 04HI}]{[stacks]} reduces this to the obvious fact that $\{xy=0\} \hookrightarrow {\mathbb A}^2$ is lci of codimension $1$. However, if ${\sh K}$ is the kernel of the surjective map $f^*\Omega_X \to \Omega_C$, then $\bigwedge^2 f^*\Omega_X \otimes {\sh K}^\vee \cong \omega_C$ by one of the definitions of the dualizing sheaf \cite[p. 163]{[Kn83]}, and \eqref{eqn: mysterious ses} follows.

Returning to the proof of the proposition, let $f:C \to V_t$ be a stable map in ${\modu V}_t(\mathbb C)$, $t \in \Delta(\cc)$. The discussion above shows the space of first order deformations of $f$ as a stable map to $V_t$ is $g$-dimensional. Thus ${\modu V}/\Delta$ has relative dimension at most $g$ and it is smooth if equality occurs. In fact, it follows by well-known ideas related to the `semi-regularity map' that what we've proved so far suffices to deduce the proposition. Indeed, by \cite[Theorem 2.4 and Remark 3.1]{[KT14]}, $\smash{\overline{\modu M} \to \Delta}$ admits a relative perfect obstruction theory $\smash{ E_\mathrm{red}^\bullet \to {\mathbb L}_{\overline{\modu M}/\Delta} }$ of relative dimension $g$. The transversality condition \cite[condition (3)]{[KT14]} is trivial to check \cite[Remark 1, p. 8]{[Mu85]}. Hence $\smash{{\modu V} \to \Delta}$ also admits a relative perfect obstruction theory of dimension $g$ since ${\modu V}$ is open in $\smash{\overline{\modu M}}$. However, we've just shown that the spaces of (relative) first order deformations at all $\cc$-points of ${\modu V}$ are $g$-dimensional, hence all \emph{reduced} obstruction spaces vanish, and the conclusion follows \cite[Proposition 7.3]{[BF97]}. 
\end{proof}

In particular, Proposition \ref{prop: relatively smooth} shows that there is no distinction between connected and irreducible components of ${\modu V}$, by \cite[Proposition 4.16]{[DM69]}. 

We also note that ${\modu V}_g(V_M/M,\beta;\theta) = \emptyset$ if $\theta \notin \Sigma_g$, and so
\begin{equation}\label{eqn: total decomposition V}
{\modu V}_{g,n}(V_M/M,\beta) = \bigsqcup_{\theta \in \Sigma_g}  {\modu V}_{g,n}(V_M/M,\beta;\theta).
\end{equation} 
Indeed, for $p \in M \backslash \partial M(\cc)$, we have ${\modu V}_g(V_M/M,\beta;\theta)_p = \emptyset$ if $\theta \notin \Sigma_g$ by Lemma \ref{lem: possible factorizations} and Remark \ref{rmk: connect to intro}, and (although this is somewhat of a cheat) the claim can then be deduced from Proposition \ref{prop: relatively smooth}. Alternatively, a direct elementary argument that ${\modu V}_g(V_M/M,\beta;\theta)_p = \emptyset$ for $\theta \notin \Sigma_g$, $p \in \partial M(\cc)$ is also possible, and involves analyzing stable maps similar to those in Definition \ref{defn: simple maps} below, but we are free to skip this. 

It will be convenient to have an explicit way to compute the maximal factorization (Lemma \ref{lem: possible factorizations}) of stable maps in the central fiber. We will only need this in very concrete situations, so we state a trivial criterion which will suffice for our needs.

\begin{rmk}\label{rmk: factorizations on product surfaces}
Let $f:C \to V_o$ be a genus $g$ primitive class stable map (over the base $\spec \cc$, i.e. a `single' stable map), and assume that $C$ is of compact type and decomposes into irreducible components as $\smash{ C = B \cup \bigcup_{k=1}^r G_k \cup \bigcup_{\alpha=1}^\ell T_\alpha }$, such that $B \cong E$ is the root of the dual tree, and maps isomorphically onto a fiber of $V_o \to F$, $G_k$ is a ghost (contracted) rational component, and $T_\alpha$ is a positive genus (say, non-contracted) component mapping onto a fiber of $V_o \to E$, and $T_\alpha$ corresponds to a leaf of the dual tree. Then we may trivially compute the maximal factorization of $f$ as follows: if $\Lambda_\alpha \subseteq \mathrm{H}_1(F,\zz)$ is the image of the map on $\hh_1$ induced by $T_\alpha \to F$, then the maximal factorization of $f$ is the isogeny that corresponds to $\smash{ \sum_{\alpha = 1}^\ell \Lambda_\alpha }$ cf. Remark \ref{rmk: isogenies on product surfaces}. Indeed, $f$ factors through some $\theta =(\mathrm{id}_E,\theta_F) : E \times F' \to E \times F$ if and only if all maps $T_\alpha \to F$ factor through $\theta_F$, which happens if and only if $\Lambda_\alpha$ is contained in the image of $\hh_1(\theta_F)$. 
\end{rmk}

\subsection{Combinatorial preliminaries}\label{ssec: partitions combinatorics} This subsection is concerned with some elementary combinatorial considerations. Although nothing here is too deep or difficult, the motivation for our definitions will only gradually become clear. Let $\zz^2$ be a lattice of rank two. 

\begin{defn}\label{defn: partitions}
A \emph{partition} of the lattice $\zz^2$ is an unordered collection $\Lambda_1,\ldots,\Lambda_k$ of rank $2$ sublattices that collectively span $\zz^2$, that is, $\zz^2 = \Lambda_1 + \cdots + \Lambda_k$.  
The \emph{length} of the partition is $k$, the number of summands, and the \emph{degree} of the partition $\varpi$ is $\deg \varpi = [\zz^2:\Lambda_1] + \cdots + [\zz^2:\Lambda_k]$. 
\end{defn}

Let $\Pi^k$ be the set of partitions of $\zz^2$ of length $k$, and $\Pi_d^k$ the set of partitions of $\zz^2$ of degree $d$ and length $k$.

\begin{rmk}\label{rmk: partitions exist}
If $2 \leq k \leq d$, then $\Pi_d^k \neq \emptyset$.
\end{rmk}

We draw an arrow $\varpi_1 \succ \varpi_2$ pointing from $\varpi_1 \in \Pi^k$ to $\varpi_2 \in \Pi^{k-1}$ if $\varpi_2$ is obtained from $\varpi_1$ by replacing two summands $\Lambda_i$ and $\Lambda_j$ of $\varpi_1$ with their sum $\Lambda_i + \Lambda_j$. We define an unoriented graph structure on $\Pi^k$ by drawing an edge between two elements $\varpi_1,\varpi_2 \in \Pi^k$ if:
\begin{enumerate}
\item there exists some $\varpi_3 \in \Pi^{k-1}$ with arrows $\varpi_1 \succ \varpi_3$ and $\varpi_2 \succ \varpi_3$, and
\item the new term of $\varpi_3$ coming from $\varpi_1 \succ \varpi_3$ is also the new term of $\varpi_3$ coming from $\varpi_2 \succ \varpi_3$.
\end{enumerate} 
Under these circumstances, we say that $\varpi_1 \succ \varpi_3 \prec \varpi_2$ is a \emph{roof}. We take the induced graph structure on $\Pi_d^k$. Equivalently, the graph structure on $\Pi_d^k$ can be described as follows: $\varpi=(\Lambda_1,\ldots, \Lambda_k)$ and $\varpi'=(\Lambda_1,\ldots, \Lambda_k)$ are connected by an edge if and only if there exist $i<j$ and $i'<j'$ such that
$$ \Lambda_i+\Lambda_j = \Lambda'_{i'}+\Lambda'_{j'} \quad \text{and} \quad [\zz^2:\Lambda_i] + [\zz^2:\Lambda_j] = [\zz^2:\Lambda'_{i'}] + [\zz^2:\Lambda'_{j'}]. $$ 

\begin{prop}\label{prop: graph is connected}
The graph structure on $\Pi_d^k$ above is connected if $2 \leq k \leq d$. 
\end{prop}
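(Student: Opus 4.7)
My plan is to show that every partition is connected by edges to a distinguished partition
$\varpi_0=(\zz^2,\ldots,\zz^2,\Lambda_0)$
with $k-1$ copies of $\zz^2$ and one fixed term $\Lambda_0$ of index $d-k+1$; pairwise connectivity then follows from having a common target. The base case $k=2$ is immediate: any two elements of $\Pi^2_d$ automatically satisfy $\Lambda_1+\Lambda_2=\Lambda'_1+\Lambda'_2=\zz^2$ and have index sums both equal to $d$, so $\Pi^2_d$ is a complete graph. For $k\geq 3$ the argument proceeds in three phases.

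\emph{Phase 1 (create a $\zz^2$ term).} If $\zz^2$ already occurs in $\varpi$, skip; otherwise pick $I\subseteq\{1,\ldots,k\}$ of minimal size with $\sum_{i\in I}\Lambda_i=\zz^2$ (so $|I|\geq 2$). If $|I|=2$, one edge replaces the pair $(\Lambda_{i_1},\Lambda_{i_2})$ by $(\zz^2,M)$ with $M$ absorbing the compensating index $n_{i_1}+n_{i_2}-1$. If $|I|\geq 3$, choose $i_1,i_2\in I$ with $\Lambda_{i_1}\neq\Lambda_{i_2}$ -- these exist, since otherwise all $\Lambda_i$, $i\in I$, would equal a common $L\subsetneq\zz^2$, giving $\sum_{i\in I}\Lambda_i=L\neq\zz^2$ -- and replace the pair by $(T,T^\#)$ where $T:=\Lambda_{i_1}+\Lambda_{i_2}\subsetneq\zz^2$ (proper by minimality of $I$) and $T^\#\subsetneq T$ has the required index in $\zz^2$. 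This is a legitimate edge: writing $n_{i_j}=[\zz^2:T]\cdot m_j$, a direct computation gives $[T:T^\#]=m_1+m_2-1\geq 2$, where the strict inequality uses exactly that $\Lambda_{i_1}\neq\Lambda_{i_2}$. The new partition still avoids $\zz^2$, but the subset $I\setminus\{i_2\}$ (at position $i_1$ now holding $T$) sums to $\zz^2$, so the minimum $|I|$ strictly drops; iterating terminates at $|I|=2$.

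\emph{Phase 2 (raise the $\zz^2$ count).} Suppose $\varpi$ contains a $\zz^2$ term and at least two non-$\zz^2$ terms $\Lambda_j,\Lambda_{j'}$ of indices $n_j,n_{j'}\geq 2$. Two pair-moves $(\zz^2,\Lambda_j)\to(\zz^2,\Lambda'_j)$ and $(\zz^2,\Lambda_{j'})\to(\zz^2,\Lambda'_{j'})$ (both trivially valid since each pair's sum is $\zz^2$) replace $\Lambda_j,\Lambda_{j'}$ by lattices of the same respective indices, chosen so that $\Lambda'_j+\Lambda'_{j'}=\zz^2$; one may explicitly take $\Lambda'_j=n_j\zz\oplus\zz$ and $\Lambda'_{j'}=\zz\oplus n_{j'}\zz$. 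A third edge on the pair $(\Lambda'_j,\Lambda'_{j'})$ then replaces it with $(\zz^2,M)$, $[\zz^2:M]=n_j+n_{j'}-1\geq 3$, raising the $\zz^2$-count by one while preserving the preexisting $\zz^2$. Iterating reaches $k-1$ copies of $\zz^2$, and \emph{Phase 3} is a single edge on a pair $(\zz^2,\Lambda)$ adjusting $\Lambda$ to $\Lambda_0$.

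The main obstacle is Phase 1. A tempting shortcut would be to argue that some pair always sums to $\zz^2$ so that one edge produces a $\zz^2$ term -- but this fails in general: for example the three sublattices $\langle(3,0),(0,2)\rangle$, $\langle(1,0),(0,10)\rangle$, $\langle(3,0),(0,5)\rangle$ (of indices $6,10,15$) form a valid partition in which no pair sums to $\zz^2$. The iterative merging of a pair inside a minimum $I$, shrinking $|I|$ step by step, is the essential device that bypasses this; the one delicate point is ensuring the merge is an edge, which reduces to the inequality $m_1+m_2\geq 3$ and is precisely where the choice $\Lambda_{i_1}\neq\Lambda_{i_2}$ (rather than an arbitrary pair from $I$) becomes essential.
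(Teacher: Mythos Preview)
Your proof is correct. It reaches the same conclusion as the paper but by a noticeably different route.

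The paper argues by induction on $k$: first it shows that any $\varpi$ is equivalent to one containing $\zz^2$ by repeatedly applying the basic pair move $(\Lambda_i,\Lambda_j)\mapsto(\Lambda_i+\Lambda_j,\Lambda')$ so as to \emph{strictly decrease the smallest index} among the terms; then, using the $\zz^2$ term to freely replace any other term by one of the same index, it arranges for the remaining $k-1$ terms to also span $\zz^2$ and invokes the inductive hypothesis on them. Your argument dispenses with induction entirely and instead connects every $\varpi$ directly to a fixed canonical partition $\varpi_0=(\zz^2,\ldots,\zz^2,\Lambda_0)$. Your Phase~1 has the same goal as the paper's first step but uses a different potential function---the size of a minimal spanning subset $I$ rather than the minimal index---and your Phase~2 increases the $\zz^2$-count one at a time rather than reducing to a smaller $k$. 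Both approaches rest on the same elementary pair move; the paper's minimal-index descent is slightly slicker, while your argument is a bit more explicit and self-contained, and your observation about the failure of the ``some pair sums to $\zz^2$'' shortcut (with the index-$6,10,15$ example) is a nice touch that the paper does not spell out.
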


\begin{proof}
We proceed by induction on $k$. The graph is complete for $k=2$. Assume that $k \geq 3$ and that the statement holds for all $(d',k')$ with $k'<k$. Let $\varpi \in \Pi_d^k$. We say that $\varpi$ is equivalent to $\varpi'$ if it belongs to the same connected component.

The first step is to prove that $\varpi$ is equivalent to a partition containing $\zz^2$ among its terms. Note that any pair of summands $(\Lambda_i,\Lambda_j)$ can be replaced with any pair $(\Lambda_i+\Lambda_j,\Lambda')$, where $\Lambda' \subset \Lambda_i+\Lambda_j$ has the suitable index $ [\Lambda_i+\Lambda_j: \Lambda'] = [\Lambda_i+\Lambda_j: \Lambda_i] + [\Lambda_i+\Lambda_j: \Lambda_j]-1$. 
The only way the value of the smallest index cannot be decreased is if a sublattice realizing this minimum contains all the other summands. This lattice must be $\zz^2$.

The second step is to prove that  $\varpi$ is equivalent to a partition containing $\smash{ \zz^2 }$ such that the remaining lattices also add up to $\smash{ \zz^2 }$. This will complete the proof by the inductive hypothesis since we can play the game only with the remaining $k-1$ lattices. Replace $\varpi$ with the partition found in the first step. Now any lattice can be replaced with any other lattice of the same index. It is not hard to see that given any sequence $i_1,\ldots,i_{k-1}$ of natural numbers, there exist $k-1$ lattices of these indices which add up to $\zz^2$. For instance, we can choose the first lattice to be $\zz \times i_1 \zz$, the second lattice to be  $i_2\zz \times \zz$ and the others arbitrarily. \end{proof}

\subsection{Reduction of Theorem \ref{thm: main theorem} to Claim \ref{claim: key claim}}\label{ss: reduction ss} Let $\Pi = \Pi_d^{g-1}$ be the set of partitions of $\Lambda[\theta]$ of length $g-1$ and degree $d$, cf. Definition \ref{defn: partitions}, and with notation as in \S\ref{ssec: partitions combinatorics} and before Proposition \ref{prop: relatively smooth}. Keeping Remark \ref{rmk: factorizations on product surfaces} in mind, we make the following definition. 

\begin{defn}\label{defn: simple maps}
Let $\varpi = (\Lambda_1 + \cdots + \Lambda_{g-1} = \Lambda[\theta]) \in \Pi$. A stable map $(C,f) \in {\modu T}_o(\cc)$ is \emph{simple of type $\varpi$} if
\begin{enumerate}
\item $C = B \cup T_1 \cup \cdots \cup T_{g-1}$ such that all irreducible components $B,T_1,\ldots,T_{g-1}$ are smooth of genus $1$, and the dual graph is a star centered at $B$; and
\item the restriction of $f$ to the `backbone' $B$ is an isomorphism onto a fiber of $V_o \to F$, and the restriction of $f$ to the `tooth' $T_i$ is the isogeny onto a fiber of $V_o \to E$ corresponding to $\Lambda_i$. 
\end{enumerate}
\end{defn}

These are essentially the stable maps that come up in \cite[\S4]{[BL99]}, without the marked points on the source.

\begin{rmk}\label{rmk: simple maps are unramified}
All simple stable maps of type $\omega$ belong to ${\modu V}(\cc)$. 
\end{rmk}

Definition \ref{defn: simple maps} trivially extends to any algebraically closed extension of $\cc$, and then it extends to families by requiring the geometric fibers to be simple of type $\varpi$. The category of simple stable maps of type $\varpi$ is an open substack of ${\modu V}_o$, which we will denote by ${\modu V}[\varpi]$. We sketch the construction and leave the details to the reader. We have a `clutching' $1$-morphism 
\begin{equation}\label{eqn: clutching for simple} \left\{(x_1,\ldots,x_{g-1}) \in E^{g-1}:x_i \neq x_j \text{ for all $i \neq j$}\right\}  \times F \longrightarrow {\modu V}_o, \end{equation}
which, on $\cc$-points, glues a fiber of $V_o \to F$ to $g-1$ isogenies corresponding to $\Lambda_1,\ldots,\Lambda_{g-1}$ onto distinct fibers of $V_o \to E$ in the natural way required to obtain a type $\varpi$ simple stable map. This can be formally defined simply by constructing the corresponding stable map over its source. It can be checked that \eqref{eqn: clutching for simple} is \'{e}tale (some technical details are left to the reader here), and we take ${\modu V}[\varpi]$ to be the open substack corresponding to its image, keeping \cite[\href{https://stacks.math.columbia.edu/tag/04XI}{Tag 04XI}]{[stacks]} in mind. In particular, ${\modu V}[\varpi]$ is nonempty and irreducible of dimension $g$.

\begin{rmk}\label{rmk: remark about closures} Let ${\modu Z}$ be an irreducible, or equivalently, connected, component of ${\modu V}$, and $\smash{ \overline{{\modu Z}}}$ its closure relative to either ${\modu T}$ or $\smash{\overline{\modu M}}$ with, say, the reduced structure. We claim that if $\smash{ |\overline{{\modu Z}}| \supset |{\modu V}[\varpi]|}$ for some $\varpi \in \Pi$, then $\smash{ |{\modu Z}| \supset |{\modu V}[\varpi]|}$. (This is elementary, but rather confusing because $\overline{{\modu Z}_o} \neq \overline{\modu Z}_o$, so we write a careful proof.) Let $v \in |{\modu V}[\varpi]|$, arbitrary. Let ${\modu Z}'$ be the connected component of ${\modu V}$ which contains $v$, so $|{\modu Z}'| \cap |\overline{\modu Z}| \neq \emptyset$. Since $|\overline{\modu Z}|$ is irreducible, and $|{\modu Z}'| \cap |\overline{\modu Z}|$ and $|{\modu Z}|$ are both nonempty and open in it, we must have $\emptyset \neq (|{\modu Z}'| \cap |\overline{\modu Z}|) \cap |{\modu Z}| = |{\modu Z}| \cap |{\modu Z}'|$. Hence ${\modu Z} = {\modu Z}'$ and $v \in |{\modu Z}|$.
\end{rmk}

\begin{prop}\label{prop: flexibility}
Any irreducible component of ${\modu V}$ contains some ${\modu V}[\varpi]$, $\varpi \in \Pi$. 
\end{prop}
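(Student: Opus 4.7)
The plan is to describe $\modu V_o$ completely as $\bigsqcup_{\varpi \in \Pi} \modu V[\varpi]$ and then show that any irreducible component $\modu Z$ of $\modu V$ meets some $\modu V[\varpi]$ in at least one point. Since $\modu Z$ is both open and closed in $\modu V$ by smoothness (Proposition \ref{prop: relatively smooth}) and $\modu V[\varpi]$ is irreducible, a nonempty intersection $\modu Z \cap \modu V[\varpi]$ is automatically open-closed in $\modu V[\varpi]$, hence equal to $\modu V[\varpi]$.

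For the classification of $\modu V_o$, I would take $(C, f) \in \modu V_o(\cc)$ and argue as follows. Unramifiedness rules out contracted components (the differential $df$ would vanish on one, obstructing the surjectivity of $f^*\Omega_{V_o} \to \Omega_C$), so every irreducible component of the compact-type curve $C$ maps nontrivially and, being a smooth curve mapping unramifiedly onto an elliptic curve, is a smooth genus-$1$ curve mapping as an isogeny onto a fiber of $V_o = E \times F$ along one of the two projections. The class $\beta_o = [E \times \{\mathrm{pt}\}] + d[\{\mathrm{pt}\} \times F]$ then forces exactly one backbone $B$ mapping isomorphically onto an $E$-fiber together with $g - 1$ teeth $T_\alpha$ mapping as isogenies onto $F$-fibers. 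Parallel fibers of each projection are disjoint, so nodes can only occur where a tooth meets the backbone; compact type (the nodal plus tree condition) forbids two teeth from meeting $B$ at the same point, since that would produce coincident nodes, a non-nodal singularity. Hence distinct teeth map to distinct $F$-fibers and the dual graph is the star centered at $B$. By Remark \ref{rmk: factorizations on product surfaces}, the sum of the lattices associated to the teeth is exactly $\Lambda[\theta]$, and a degree count places $\varpi = (\Lambda_1, \ldots, \Lambda_{g-1})$ inside $\Pi$. Thus $\modu V_o = \bigsqcup_{\varpi \in \Pi} \modu V[\varpi]$.

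With the classification in hand, the proposition is equivalent to $\modu Z \cap \modu V_o \neq \emptyset$. A dimension count ($\dim \modu Z = g + 1$ versus fiber dimension $g$) forces $\modu Z \to \Delta$ to be dominant, and properness of $\overline{\modu M}_g(V_M/M, \beta; \theta) \to \Delta$ then yields a limit in $\overline{\modu Z}_o \subseteq \overline{\modu M}_{g, o}$. The main obstacle is to produce such a limit that actually lies in the open substack $\modu V_o$, rather than in a boundary stratum of ramified or non-compact-type stable maps.

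To overcome this I would lift $\modu Z$ to a component of $\modu V_{g, g}$ via the compatibility of the decomposition with the forgetful map (Remark \ref{rmk: compatible with forgetful}) and pin down $g$ markings by imposing evaluation conditions against general sections $\sigma_1, \ldots, \sigma_g \colon \Delta \to V$ whose values at $o$ are spread over one $E$-fiber and $g - 1$ pairwise-distinct generic $F$-fibers of $V_o$. A dimension count makes the resulting fiber-product scheme a proper quasi-finite family over $\Delta$; the generality of the $\sigma_i(o)$ should force every geometric point of its central fiber to correspond to an unramified compact-type stable map with its $g$ markings distributed one per component of a simple stable map, producing the desired point of $\modu V_o$ in $\modu Z$. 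The hardest aspect of this last step is verifying that the incidence conditions genuinely exclude every possible limit in $\overline{\modu M}_{g, g, o} \setminus \modu V_{g, g, o}$ and that the resulting cycle is actually nonempty in the central fiber; this is where the marked-point formalism is genuinely needed, matching the paper's remark that the case $n > 0$ is used only once, in this proof.
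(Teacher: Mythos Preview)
Your high-level strategy matches the paper's: pass to the marked moduli space $\overline{\modu M}_{g,g}(V/\Delta,\beta;\theta)$ and use the evaluation map $\mathbf{ev}$ to $V^g_\Delta$ to pin down a limit over $o$ which is forced to be simple. Your preliminary classification $\modu V_o = \bigsqcup_{\varpi} \modu V[\varpi]$ is correct and pleasant (the unramifiedness at nodes is what rules out two teeth being adjacent, or sharing an $F$-fiber), though the paper does not need it in that form: it only uses that a general $g$-tuple in $V_o^g$ has preimage under $\mathbf{ev}$ consisting entirely of simple maps with markings.

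The genuine gap is the step you yourself flag at the end. You assert that ``a dimension count makes the resulting fiber-product scheme a proper quasi-finite family over $\Delta$'', but a dimension count alone does not show that the evaluation map from a component ${\modu W}$ lying over ${\modu Z}$ to $V^g_\Delta$ is dominant. Both have dimension $2g+1$, yet $\mathbf{ev}|_{\modu W}$ could a priori land in a proper closed substack, in which case your fiber product over general sections $\sigma_i$ is empty over $\Delta^*$ and you never produce a point in the central fiber. The paper fills exactly this gap with a tangent-space computation: for $(C,f)\in {\modu V}^*(\cc)$ the source $C$ is smooth of genus $g$ (Remark~\ref{rmk: connect to intro}), and for a \emph{general} degree-$g$ divisor $D=x_1+\cdots+x_g$ on $C$ the restriction $\Gamma(\Omega_C)\to \Gamma(\Omega_C\otimes{\sh O}_D)$ is an isomorphism, because $\Omega_C(-D)$ is a general class of degree $g-2$ and hence ineffective. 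Via the identification ${\sh N}_f\cong\Omega_C$ and a short diagram chase, this restriction map \emph{is} the relative differential $d_\Delta(\mathbf{ev})$ at $(C,\mathbf{x},f)$; so $\mathbf{ev}|_{\modu W}$ is submersive there, hence dominant, hence surjective by properness of the closure. That is the one substantive computation in the proof, and your proposal does not supply it.
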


\begin{proof}
First, we claim that on the source $C$ of any stable map $(C,f) \in {\modu V}^*({\cc})$, we may add $g$ markings ${\mathbf x}=(x_1,\ldots,x_g)$ such that the $\Delta$-relative differential of the evaluation $1$-morphism
\begin{equation} \mathbf{ev}: \overline{\modu M}_{g,g}(V/\Delta,\beta) \longrightarrow V_\Delta^g := \underbrace{V \times_\Delta \cdots \times_\Delta V}_{\text{$g$ copies of $V$}} \end{equation}
is an isomorphism at the point $(C,{\mathbf x},f)$. By Remark \ref{rmk: connect to intro}, $C$ is smooth of genus $g$. Let $D= x_1 + \cdots + x_g$ be a general degree $g$ divisor on $C$. The classical fact that $\smash{ \mathrm{Sym}^g C \to \mathrm{Pic}^g C }$ is birational implies that the restriction map $\Omega_C \to \Omega_C \otimes {\sh O}_D$ is an isomorphism on global sections. Indeed, $\Omega_C(-D)$ is a \emph{general} degree $g-2$ linear equivalence class, hence ineffective since $\smash{ \mathrm{Sym}^{g-2} C \to \mathrm{Pic}^{g-2} C }$ is not surjective for trivial dimension reasons, so we are done thanks to the exact sequence 
\begin{equation} 0 = \Gamma(\Omega_C(-D)) \longrightarrow \Gamma(\Omega_C) \longrightarrow \Gamma(\Omega_C \otimes {\sh O}_D) \end{equation}
because the last two terms have the same dimension $g$.

We now reinterpret the isomorphism $ \Gamma(\Omega_C) \to \Gamma(\Omega_C \otimes {\sh O}_D)$ above in deformation theoretic language. Let ${\sh N}_f$ be the normal sheaf of $f$, and consider the following commutative diagram with exact rows
\begin{center}
\begin{tikzpicture}
\matrix [column sep  = 6mm, row sep = 7mm] {
	\node (nww) {$0$}; &
	\node (nw) {$ T_{\mathbf{x}}C^g$}; &
	\node (nc) {$T_{\left( C,{\mathbf x},f \right)} \overline{\modu M}_{g,g}(V_t,\beta_t)$}; &
	\node (ne) {$T_{(C,f)} \overline{\modu M}_g(V_t,\beta_t)$};  &
	\node (nee) {$0$}; \\
	\node (sww) {$0$}; &
	\node (sw) {$ T_{\mathbf{x}}C^g $}; &
	\node (sc) {$T_{f({\mathbf x})}V^g_t $}; &
	\node (se) {$\Gamma(C,{\sh N}_f \otimes {\sh O}_D)$}; &
	\node (see) {$0$}; \\
};

\draw[->, thin] (nww) -- (nw);
\draw[->, thin] (nw) -- (nc);
\draw[->, thin] (nc) -- (ne);
\draw[->, thin] (ne) -- (nee);

\draw[->, thin] (sww) -- (sw);
\draw[->, thin] (sw) -- (sc);
\draw[->, thin] (sc) -- (se);
\draw[->, thin] (se) -- (see);

\draw[double equal sign distance, thin] (nw) -- (sw);
\draw[->, thin] (nc) -- (sc);
\draw[->, thin] (ne) -- (se);

\node at (-0.2,0) {$d_\Delta(\mathbf{ev})$};

\end{tikzpicture}
\end{center}
where $f({\mathbf x}) = (f(x_1),\ldots,f(x_g))$, and $V_t$ is of course the fiber of $V \to \Delta$ that $f$ maps to. As in the proof of Proposition \ref{prop: relatively smooth}, ${\sh N}_f \cong \Omega_C$ and the upper right term is naturally $\Gamma({\sh N}_f) = \Gamma(\Omega_C)$. Up to natural identifications, the right vertical map \emph{is} the map $\Gamma(\Omega_C) \to \Gamma(\Omega_C \otimes {\sh O}_D)$, so it must be an isomorphism. Then the central vertical map must be an isomorphism as well, which completes the proof of the claim at the beginning of this proof. 

Let ${\modu Z}$ be an irreducible/connected component of ${\modu V}$. Consider the $1$-morphism
\begin{equation} \Phi_g: \overline{\modu M}_{g,g}(V/\Delta,\beta;\theta) \longrightarrow \overline{\modu M}_g(V/\Delta,\beta;\theta) = \overline{\modu M} \end{equation}
which forgets the markings, cf. Remark \ref{rmk: compatible with forgetful}. Choose an irreducible component ${\modu W}$ of $\overline{\modu M}_{g,g}(V/\Delta,\beta;\theta)$ whose generic point is mapped by $\Phi_g$ to the generic point of ${\modu Z}$, seen in $\smash{|\overline{\modu M}|}$, and put the reduced structure on ${\modu W}$. It follows from the discussion above that the restriction of $\mathbf{ev}$ to ${\modu W}$ is dominant onto $V_\Delta^g$, and hence surjective since ${\modu W}$ is proper. In particular, $\smash{ \mathbf{ev}_o : {\modu W}_o \to V_o^g }$ is surjective. Then it is clear (and also implicit in \cite[\S4]{[BL99]}) that there exists $z \in {\modu W}_o(\cc)$, which is a simple type $\varpi$ stable map into $V_o$ (Definition \ref{defn: simple maps}), with $g$ markings in addition, for some $\varpi \in \Pi$. Indeed, for a general closed point in $\smash{V_o^g}$, all stable maps in $\smash{ \overline{\modu M}_{g,g}(V_o,\beta_o;\theta) }$ that evaluate to this $g$-tuple are of this form by Remark \ref{rmk: simple maps are unramified} and some elementary arguments. Let $z_0 = \Phi_g(z)$. On one hand, $z_0 \in {\modu V}[\varpi]({\mathbb C})$ by construction. On the other hand, we claim that $z_0 \in {\modu Z}(\cc)$ as well. Indeed, (1) the generic point of ${\modu W}$ obviously specializes to (contains in its closure) $z$, hence (2) the generic point of ${\modu Z}$ specializes to $z_0$ since specializations of points are preserved under continuous mappings, but on the other hand, (3) if ${\modu Z}'$ is the connected component of ${\modu V}$ which contains $z_0$, then its generic point trivially specializes to $z_0$, and so ${\modu Z}' = {\modu Z}$. We've thus shown that $|{\modu V}[\varpi]| \cap |{\modu Z}| \neq \emptyset$. Since ${\modu V}[\varpi]$ is connected and ${\modu Z}$ is a connected component of ${\modu V}$, it follows that $|{\modu V}[\varpi]| \subset |{\modu Z}|$, as desired.
\end{proof}

In the remainder of this section, we boil down the proof of the main theorem to Claim \ref{claim: key claim} below. We will only be able to prove Claim \ref{claim: key claim} in \S\ref{ssec: proof of main claim}, after a substantial detour.  

\begin{claim}\label{claim: key claim}
In Situation \ref{sit: main situation}, let $[\varpi_1\varpi_2]$ be an edge in $\Pi$, and let ${\modu Z}$ be an irreducible component of ${\modu V}$. Then ${\modu Z}$ contains ${\modu V}[\varpi_1]$ if and only if it contains ${\modu V}[\varpi_2]$. 
\end{claim}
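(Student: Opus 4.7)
The plan is to produce an irreducible family $\widetilde{\modu F} \subset \modu T$ whose generic point lies in $\modu V^*$ and whose central fiber meets both $\modu V[\varpi_1]$ and $\modu V[\varpi_2]$. Since the irreducible components of $\modu V$ are simultaneously open and closed (Proposition \ref{prop: relatively smooth}), such an $\widetilde{\modu F}$ would force both $\modu V[\varpi_1]$ and $\modu V[\varpi_2]$ to lie in a single component $\modu Z'$ of $\modu V$; combined with the hypothesis $\modu V[\varpi_1] \subset \modu Z$ and Remark \ref{rmk: remark about closures}, this yields $\modu Z = \modu Z'$ and $\modu V[\varpi_2] \subset \modu Z$, with the reverse implication following by symmetry. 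Unpacking the edge condition: $\varpi_1$ and $\varpi_2$ agree outside two summands, and the differing pairs $\{\Lambda_i,\Lambda_j\}$ and $\{\Lambda'_{i'},\Lambda'_{j'}\}$ satisfy $\Lambda_i+\Lambda_j = \Lambda'_{i'}+\Lambda'_{j'} =: \Lambda$ with matching index sums.

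My candidate common degeneration is a stable map $f_0 \colon C_0 \to V_o$ in $\modu T_o$ with source $C_0 = B \cup G \cup \bigcup_{k \ne i,j} T_k \cup T' \cup T''$, where $B$ is the elliptic backbone mapping isomorphically onto a fiber of $V_o \to F$, the $T_k$ are the elliptic teeth carrying the ``common'' $\Lambda_k$-isogenies onto the corresponding fibers of $V_o \to E$, $G \cong \mathbb{P}^1$ is a ghost component attached to $B$ at the merged attachment point, and $T', T''$ are elliptic components attached to $G$ that together map onto a single fiber $\{e_\star\}\times F$ of $V_o \to E$ via the isogenies associated to $\Lambda_i, \Lambda_j$. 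Collapsing the attachment points of $T_i, T_j$ on $B$ in a one-parameter family of type-$\varpi_1$ simple maps produces precisely $f_0$ by semistable reduction (with $G$ appearing to keep the limit nodal), so $f_0 \in \overline{\modu V[\varpi_1]}$, closure in $\modu T$. The analogous construction with $(\Lambda'_{i'}, \Lambda'_{j'})$ yields a variant $f_0' \in \overline{\modu V[\varpi_2]}$.

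The technical heart of the argument is to apply Theorem \ref{thm: necessary condition to deform} to smooth $f_0$ and $f_0'$ into $\modu V^*$ and to assemble these smoothings into a single irreducible family $\widetilde{\modu F} \subset \modu T$. Concretely, I would arrange $\widetilde{\modu F}$ so that its central fiber $\widetilde{\modu F}_o$ is a connected family in $\modu T_o$ which contains simple stable maps of type $\varpi_1$ near the smoothing of $f_0$, simple maps of type $\varpi_2$ near the smoothing of $f_0'$, and the degenerations $f_0, f_0'$ themselves as intermediate limits; the parameter space on the ghost-chain side is that of pairs $(M_1, M_2)$ of sublattices of $\Lambda$ with $M_1+M_2=\Lambda$ and fixed index sum, together with gluing data on $G$ and moduli of $T', T''$. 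Once $\widetilde{\modu F}$ is constructed, the conclusion is immediate from the first paragraph: irreducibility forces $\widetilde{\modu F} \subset \overline{\modu Z'}$ for a unique component $\modu Z'$ of $\modu V$, and $\widetilde{\modu F}_o \cap \modu V$ meets both $\modu V[\varpi_1]$ and $\modu V[\varpi_2]$.

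The main obstacle is the smoothing step. One must verify not only that $f_0$ and $f_0'$ individually deform to $\modu V^*$ — which is where Theorem \ref{thm: necessary condition to deform} enters essentially — but also that their smoothings can be organized into a single irreducible family in $\modu T$ whose central fiber links the two types. This is the kind of moduli-theoretic bookkeeping that the introduction refers to as being performed ``in ad hoc ways on sufficiently simple strata'' of $\overline{\modu M}_g$, and it is where the bulk of the work is expected to lie. Everything else — constructing $C_0$, the semistable-reduction step from $\modu V[\varpi_i]$, and the final open-and-closed component argument — should be routine given the setup.
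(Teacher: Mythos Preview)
Your transitional map $f_0$ is exactly the paper's pivot point $y$ (Definition \ref{defn: transitional map}), and the fact that $f_0 \in \overline{{\modu V}[\varpi_1]}$ is correct and used (Lemma \ref{lem: figure B lemma}). The genuine gap is the proposed family $\widetilde{\modu F}$: you want its central fiber to interpolate between $f_0$ and $f_0'$ by letting the sublattice pair $(M_1,M_2)$ vary inside $\Lambda$ with fixed index sum. But sublattices are discrete invariants of the covers $T',T'' \to F$; the maps $f_0$ and $f_0'$ have different topological type and lie in \emph{different} connected components of the locus of transitional maps. No irreducible family in ${\modu T}_o$ of this shape can connect them. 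Relatedly, Theorem \ref{thm: necessary condition to deform} is a \emph{necessary} condition for deformability, not a smoothing criterion; it cannot be invoked to push $f_0$ into ${\modu V}^*$.

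The paper's fix is to introduce a genuinely new stratum that forgets the individual $\Lambda_i,\Lambda_j$: the locus ${\modu Q}[\varpi \succ \tilde\varpi]$ of \emph{quasi-simple quasi-traceless} maps (Definition \ref{defn: qs and qsqt}), where $T_i,T_j$ are replaced by a single smooth genus $2$ component $T_{ij}$ whose map to $F$ is quasi-traceless relative to the node. This locus is irreducible of dimension $g$ (via \S\ref{ssec: moduli of quasi-traceless covers for genus 2}, Theorem \ref{thm: quasi-traceless final theorem}) and depends only on $\Lambda_i+\Lambda_j$ and the index sum, so ${\modu Q}[\varpi_1 \succ \varpi_3] = {\modu Q}[\varpi_2 \succ \varpi_3]$ for any roof. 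Both your $f_0$ and $f_0'$ lie in its closure. The role of Theorem \ref{thm: necessary condition to deform} is then the opposite of what you suggest: in Proposition \ref{prop: key proposition} one takes ${\modu S}=\overline{\modu Z}$, cuts out locally at $y$ the Cartier divisor where the node $B\cap G$ survives, and observes that every ${\mathbb C}$-point of this divisor is deformable, hence by Theorem \ref{thm: necessary condition to deform} its genus $2$ tooth must be quasi-traceless. A dimension count then forces this divisor to contain the generic point of ${\modu Q}[\varpi \succ \tilde\varpi]$, giving $|{\modu Q}[\varpi \succ \tilde\varpi]| \subset |\overline{\modu Z}|$. The equivalence ${\modu V}[\varpi] \subset {\modu Z} \Leftrightarrow {\modu Q}[\varpi \succ \tilde\varpi] \subset \overline{\modu Z}$ (Corollary \ref{cor: final corollary}) then yields the claim immediately.
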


\begin{proof}[Proof of Theorem \ref{thm: main theorem} assuming Claim \ref{claim: key claim}] By \eqref{eqn: total decomposition V}, it suffices to show that for general $p \in M(\cc)$, ${\modu V}_g(V_p,\beta;\theta)$ defined as in Remark \ref{rmk: connect to intro} is nonempty and irreducible. First, we prove non-emptiness. In Situation \ref{sit: main situation}, we have $\Pi \neq \emptyset$ by Remark \ref{rmk: partitions exist}, and hence ${\modu V} \neq \emptyset$ by Remark \ref{rmk: simple maps are unramified}. By Proposition \ref{prop: relatively smooth}, ${\modu V}_t \neq \emptyset$ for general $t \in \Delta$, and ${\modu V}_g(V_p,\beta;\theta) \neq \emptyset$ for general $p \in M$ follows easily, since $M$ is quasi-projective, so curves $\tau:\Delta \to M$ as is Situation \ref{sit: main situation} sweep out $M$ birationally. 

For irreducibility, we will rely on Claim \ref{claim: key claim}. First, we claim that ${\modu V}$ is irreducible in Situation \ref{sit: main situation}. Fix $\varpi_0 \in \Pi$, and let ${\modu Z}$ be any irreducible/connected component of ${\modu V}$. By Proposition \ref{prop: flexibility}, there exists $\varpi \in \Pi$ such that ${\modu Z}$ contains ${\modu V}[\varpi]$. By Proposition \ref{prop: graph is connected}, there exists a chain connecting $\gamma_0$ and $\gamma$ in $\Pi$. Applying Claim \ref{claim: key claim} inductively, we deduce that ${\modu Z}$ contains ${\modu V}[\varpi_0]$.  However, if ${\modu Z}'$ is another irreducible component of ${\modu V}$, then it must also contain ${\modu V}[\varpi_0]$ by the same token. By Proposition \ref{prop: relatively smooth}, ${\modu Z}={\modu Z}'$, so ${\modu V}$ is irreducible. 

Second, we claim that the fibers of ${\modu V} \to \Delta$ over all points in a dense open subset of $\Delta$ are irreducible. Of course, the rough idea is to eliminate the monodromy on the irreducible/connected components of the fibers. By \cite[Theorem 1]{[FP95]}, ${\modu V}$ has a quasi-projective coarse moduli space (using that $V \to \Delta$ is projective), so there certainly exists a $1$-morphism $\Delta' \to {\modu V}$ from a quasi-projective curve $\Delta'$ which intersects the central fiber set theoretically at a single point $o' \in \Delta'$. Pulling back along $\Delta' \to \Delta$, we are again in Situation \ref{sit: main situation}. The family ${\modu V}_{\Delta'} \to \Delta'$ now admits a section, and it follows from \cite[Proposition 2.2.1]{[Ro11]} that all fibers of ${\modu V}_{\Delta'} \to \Delta'$ are connected. Indeed, in the language of \cite{[Ro11]}, the `c.c.o.' along the section is an open connected substack of ${\modu V}_{\Delta'}$ with connected fibers, and it has to be all of ${\modu V}_{\Delta'}$ by the previous step. In conclusion, the fibers of ${\modu V}_{\Delta'} \to \Delta'$ are irreducible by Proposition \ref{prop: relatively smooth} and \cite[Proposition 4.16]{[DM69]}.  

Finally, we note that, since $M$ is quasi-projective, curves $\tau:\Delta \to M$ as is Situation \ref{sit: main situation} sweep out $M$ birationally, so we're done. To be very precise, pick such a sweeping family and consider the pullback of ${\modu V}_g(V_M/M,\beta;\theta)$ to the total space of this family, and argue using e.g. \cite[Theorem 4.17.(i)]{[DM69]}.
\end{proof}

The plan for the rest of the paper is the following. In \S\ref{sec: quasi-traceless covers section}, we discuss a property of (ramified) covers of elliptic curves which will come up in our criterion that characterizes primitive class stable maps to $E \times F$ that deform to nearby surfaces (Theorem \ref{thm: necessary condition to deform}). In \S\ref{ssec: bubbling up} we carry out the rest of the work needed to prove this criterion, and in \S\ref{ssec: proof of main claim} we use it to prove Claim \ref{claim: key claim}. 

\section{Quasi-traceless covers of elliptic curves}\label{sec: quasi-traceless covers section}

\subsection{Definition and the relation to the Atiyah surface}\label{ssec: quasi-traceless section intro and Atiyah surface} Let $E$ be a (smooth connected projective) genus $1$ curve over $\cc$ and $f:C \to E$ a nonconstant map from a (smooth connected projective) curve $C$. We have a pullback map $\hh^1({\sh O}_E) \to \hh^1({\sh O}_C)$ which we may construct as either the composition of $\hh^1({\sh O}_E) \to \hh^1(f_*{\sh O}_C)$ with the map $\hh^1(f_*{\sh O}_C) \to \hh^1({\sh O}_C)$ (which is an isomorphism in our case) coming from the Leray spectral sequence, or equivalently, as the differential at the origin of the pullback map $f^*:\mathrm{Pic}(E) \to \mathrm{Pic}(C)$. The \emph{trace map} on $1$-forms
$$ \mathrm{Tr}: \Gamma(\Omega_C) \longrightarrow \Gamma(\Omega_E) $$
is, by definition, the Serre dual of $\hh^1({\sh O}_E) \to \hh^1({\sh O}_C)$. Let $r \in C(\cc)$. 

\begin{defn}\label{defn: quasi-traceless}
We say that $f:C \to E$ is \emph{quasi-traceless} (relative to the point $r \in C$) if $\mathrm{Tr}(\eta) = 0$, for all regular $1$-forms $\eta \in \Gamma(\Omega_C)$ such that $\eta(r) = 0$. 
\end{defn}

By the classical results of \cite{[At57]}, there exists a unique rank $2$ bundle ${\sh V}$ on $E$ that can be obtained as a nonsplit extension
\begin{equation}\label{eqn: Atiyah sequence}
0 \longrightarrow {\sh O}_E \longrightarrow {\sh V} \longrightarrow {\sh O}_E \longrightarrow 0.
\end{equation} 
Recall that ${\sh V}$ is self-dual. Let $\varsigma:S = \mathrm{Proj}_E\mathrm{Sym} \mathop{ }{\sh V} \to E$ be the corresponding ruled surface, which we will sometimes call the Atiyah surface, and let $E_\infty \subset S$ be the distinguished section corresponding to the unique copy of ${\sh O}_E$ inside ${\sh V}$.

\begin{rmk}\label{rmk: no complete curves on open Atiyah surface}
The quasiprojective surface $S \backslash E_\infty$ contains no complete curves \cite[\S2]{[Za19a]}. (However, this is false in positive characteristic.)
\end{rmk}

We introduce some jargon (applicable in any abelian category), which will be convenient to use in the proof of Lemma \ref{lem: connection to Atiyah surface} below. 

\begin{defn}\label{defn: homomorphism annihilates extension}
We say that a homomorphism $A \to A'$ \emph{annihilates} an extension $0 \to A \to B \to C \to 0$ if the pushout of the extension along the homomorphism is a split extension. Equivalently, the homomorphism $\mathrm{Ext}^1(C,A) \to \mathrm{Ext}^1(C,A')$ induced by $A \to A'$ annihilates the extension. 
\end{defn}

\begin{lem}\label{lem: connection to Atiyah surface}
Let $C$ be a (smooth connected complex projective) curve, $r \in C(\cc)$, and $f:C \to E$ a nonconstant map. If there exists a morphism $\smash{ \widetilde{f}:C \to S }$ such that $\smash{ f = \varsigma \widetilde{f} }$ and $\smash{ \widetilde{f}^{-1}(E_\infty) = \{r\} }$ scheme-theoretically, then $f:C \to E$ is quasi-traceless relative to $r$.
\end{lem}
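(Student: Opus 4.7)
My plan is to rewrite the quasi-traceless condition, via Serre duality, as an annihilation statement about the pullback of the Atiyah extension class, and then to construct the annihilating morphism directly from $\widetilde{f}$.

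First, a form $\eta \in \Gamma(\Omega_C)$ with $\eta(r) = 0$ is exactly a section of $\Omega_C(-r) \subset \Omega_C$, so quasi-tracelessness amounts to $\mathrm{Tr}|_{\Gamma(\Omega_C(-r))} = 0$. By the definition of $\mathrm{Tr}$ and Serre duality, which identifies the dual of the inclusion $\Omega_C(-r) \hookrightarrow \Omega_C$ with the map on $\hh^1$ induced by the canonical inclusion ${\sh O}_C \hookrightarrow {\sh O}_C(r)$ (check by dualizing the map of line bundles and tensoring with $\omega_C^{-1}$), this is equivalent to the vanishing of the composition
$$ \hh^1({\sh O}_E) \xrightarrow{f^*} \hh^1({\sh O}_C) \longrightarrow \hh^1({\sh O}_C(r)). $$
Since $\hh^1({\sh O}_E) \cong \cc$ is spanned by the class $[{\sh V}]$ of the Atiyah extension and $f^*[{\sh V}]$ is the class of the pulled-back extension $0 \to {\sh O}_C \to f^*{\sh V} \to {\sh O}_C \to 0$, the lemma reduces to showing that the canonical inclusion ${\sh O}_C \hookrightarrow {\sh O}_C(r)$ annihilates this pullback extension, cf. Definition \ref{defn: homomorphism annihilates extension}. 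Equivalently, I need to produce a morphism $q: f^*{\sh V} \to {\sh O}_C(r)$ whose restriction to the sub ${\sh O}_C$ is the canonical inclusion ${\sh O}_C \hookrightarrow {\sh O}_C(r)$.

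The key step is to manufacture this $q$ from $\widetilde{f}$. Under the convention $S = \mathrm{Proj}_E \mathrm{Sym}\, {\sh V}$, sections of $\varsigma$ correspond to line bundle quotients of ${\sh V}$, and $E_\infty$ is the section associated to the quotient ${\sh V} \twoheadrightarrow {\sh O}_E$ of the Atiyah sequence \eqref{eqn: Atiyah sequence} (whose kernel is the distinguished sub). Consequently, $\widetilde{f}$ is encoded as a surjection $q: f^*{\sh V} \twoheadrightarrow L$ with $L = \widetilde{f}^*{\sh O}_S(1)$, and $\widetilde{f}^{-1}(E_\infty)$ is the scheme-theoretic zero locus of the composition $\iota: {\sh O}_C \hookrightarrow f^*{\sh V} \xrightarrow{q} L$, where the first arrow is the pullback of the Atiyah sub. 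The hypothesis $\widetilde{f}^{-1}(E_\infty) = \{r\}$ then forces $L \cong {\sh O}_C(r)$ and, after rescaling this isomorphism, makes $\iota$ agree with the canonical inclusion. This $q$ is exactly the annihilating morphism required above, completing the proof.

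I expect the main delicate point to be in the third paragraph: one must pin down that $E_\infty$ corresponds to the correct sub/quotient in the Atiyah sequence, and translate the scheme-theoretic equality $\widetilde{f}^{-1}(E_\infty) = \{r\}$ into the statement that $L \cong {\sh O}_C(r)$ with $\iota$ vanishing to order exactly $1$ at $r$ (so that $L$ cannot be ${\sh O}_C(nr)$ for $n > 1$, in which case $\iota$ would still have the same set-theoretic zero locus but would fail to be the canonical inclusion). Once that is nailed down, what remains is a formal Serre duality and pushout manipulation.
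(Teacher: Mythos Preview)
Your argument is correct, and after the common Serre-duality reduction it diverges from the paper's route. The paper proves the annihilation statement \emph{on $S$}: it uses the exact sequence $0 \to {\sh O}_S \to {\sh O}_S(E_\infty) \to \iota_*{\sh O}_{E_\infty} \to 0$ together with the numerical inputs $h^1({\sh O}_S)=1$ and $h^0({\sh O}_S(E_\infty))=1$ (the latter quoted from \cite{[Za19a]}) to force $\hh^1({\sh O}_S) \to \hh^1({\sh O}_S(E_\infty))$ to vanish, and then pulls this back along $\widetilde{f}$. You instead bypass the computation on $S$ entirely by exhibiting the annihilating map on $C$ explicitly as the tautological surjection $f^*{\sh V}\twoheadrightarrow \widetilde{f}^*{\sh O}_S(1)$, and then reading off $\widetilde{f}^*{\sh O}_S(1)\cong{\sh O}_C(r)$ from the hypothesis. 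This is more elementary and self-contained (no appeal to \cite{[Za19a]}), and it makes transparent \emph{why} the annihilation holds: the paper's cohomological vanishing on $S$ is, unwound, precisely the existence of the tautological quotient $\varsigma^*{\sh V}\to{\sh O}_S(1)\cong{\sh O}_S(E_\infty)$ restricting to the canonical inclusion on the distinguished sub. The paper's approach does have the minor conceptual benefit of isolating a universal fact on $S$ independent of $C$; yours trades that for a shorter, reference-free argument. Your identification of $E_\infty$ with the quotient in the Atiyah sequence is consistent with the paper's ``unique copy of ${\sh O}_E$ inside ${\sh V}$'' under Grothendieck's $\mathrm{Proj}$ convention, and your care with the scheme-theoretic hypothesis (ruling out $L\cong{\sh O}_C(nr)$, $n>1$) is exactly the point that makes the argument go through.
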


\begin{proof} Note that $\iota_*{\sh O}_{E_\infty} \otimes {\sh O}_S(E_\infty) \cong \iota_*{\sh O}_{E_\infty}$, where $\iota:E_\infty \hookrightarrow S$, since the normal bundle of $E_\infty$ in $S$ is trivial \cite[\S2]{[Za19a]}. Then we have a short exact sequence
\begin{equation}\label{eqn: some ses on Atiyah surface}
0 \longrightarrow {\sh O}_S \longrightarrow {\sh O}_S(E_\infty) \longrightarrow \iota_*{\sh O}_{E_\infty} \longrightarrow 0.
\end{equation}
We have $h^1({\sh O}_S) = 1$ since $S$ is birational to $E \times {\mathbb P}^1$, and $h^0({\sh O}_S(E_\infty) ) = 1$ by \cite[Proposition 2.2]{[Za19a]}. Then the first 4 (nonzero) terms in the long exact sequence
$$ 0 \to \Gamma({\sh O}_S) \to \Gamma({\sh O}_S(E_\infty)) \to \Gamma({\sh O}_{E_\infty}) \to \mathrm{H}^1({\sh O}_S) \xrightarrow{\psi} \mathrm{H}^1({\sh O}_S(E_\infty)) \to \cdots $$ 
have dimension $1$, so $\psi = 0$. Equivalently, $\mathrm{Ext}^1({\sh O}_S,{\sh O}_S) \to \mathrm{Ext}^1({\sh O}_S,{\sh O}_S(E_\infty))$ induced by ${\sh O}_S \to {\sh O}_S(E_\infty)$ is identically $0$. Therefore, in the language of Definition \ref{defn: homomorphism annihilates extension}, ${\sh O}_S \to {\sh O}_S(E_\infty)$ annihilates the pullback of \eqref{eqn: Atiyah sequence} along $\varsigma$. Pulling back to $C$, we deduce that ${\sh O}_C \to {\sh O}_C(r)$ annihilates the pullback of \eqref{eqn: Atiyah sequence} along $f$. Unwinding the definitions, the composition $\hh^1({\sh O}_E) \to \hh^1({\sh O}_C) \to \hh^1({\sh O}_C(r))$ is identically zero, and hence $f:C \to E$ is quasi-traceless relative to $r \in C$ by Serre duality. 
\end{proof}

The converse of Lemma \ref{lem: connection to Atiyah surface} is true, but we will not need it. 

\subsection{Moduli of quasi-traceless covers of genus $2$}\label{ssec: moduli of quasi-traceless covers for genus 2} Let $E$ be a (smooth, connected, projective) genus $1$ curve over $\cc$. In this section, we construct a partial compactification of the moduli space of quasi-traceless covers of $E$ with sources of genus $2$, and prove that, after fixing the maximal factorization through isogenies (\S\ref{sss: factorizations through isogenies of elliptic curves}), the moduli space is irreducible. Although the construction makes sense even for higher genus, we strongly believe that a better compactification exists in general, so we will limit ourselves to the the genus $2$ case, and hopefully discuss the generalization in future work. For this reason, this subsection is rather ad hoc and technical. 

Let $\smash{ \overline{\modu M}_2(E,d) }$ be the moduli space of genus $2$ degree $d$ stable maps to $E$. Let $\smash{ {\modu K} \subset \overline{\modu M}_2(E,d) }$ be the open substack consisting of stable maps $f:C \to E \times S$ which are finite and have sources of compact type. Let $\smash{ {\modu K}_1 \subset \overline{\modu M}_{2,1}(E,d) }$ be the preimage of ${\modu K}$ under the forgetful $1$-morphism $\smash{ \overline{\modu M}_{2,1}(E,d) \to \overline{\modu M}_2(E,d) }$. (Note that it is not true that all stable maps in ${\modu K}_1$ are finite.) The upshot of the construction is that our partial compactification $\smash{ \widetilde{\modu Q} \subset {\modu K}_1 }$ will be defined as the degeneracy locus of the natural ${\sh O}_{{\modu K}_1}$-module homomorphism
\begin{equation}\label{eqn: trace + eval} {\mathbb E} \xrightarrow{\gamma \oplus \mathrm{tr}}  {\sh L} \oplus {\sh O}_{{\modu K}_1}, \end{equation}
where ${\mathbb E}$ and ${\sh L}$ are the Hodge bundle and the tautological cotangent line bundle at the marking respectively. The definitions of the bundles and maps involved are briefly reviewed below. 

First, we discuss the trace map on ${\modu K}$. Let $f:C \to E \times S$ be a stable map in ${\modu K}(S)$. In this paragraph, we assume that $S$ is (at least locally) noetherian, and we will explain later why the assumption was harmless. Let $\pi:C \to S$ be the projection map, and $\eta:E \times S \to S$ the projection to the second factor. Consider the ${\modu O}_S$-module homomorphism
\begin{equation}\label{eqn: relative dual trace} {\sh O}_S \cong  \hh^1({\sh O}_E) \otimes {\sh O}_S \cong {\mathrm R}^1\eta_*{\sh O}_{S \times E} \longrightarrow {\mathrm R}^1\pi_*{\sh O}_C \cong (\pi_*\omega_{C/S})^\vee, \end{equation}
obtained as the composition of ${\mathrm R}^1\eta_*{\sh O}_{S \times E} \to {\mathrm R}^1\eta_*(f_*{\sh O}_C)$ induced by $f^\#:{\sh O}_{S \times E} \to f_*{\sh O}_C$ with the `edge map' ${\mathrm R}^1\eta_*(f_*{\sh O}_C) \to {\mathrm R}^1\pi_*{\sh O}_C$ from the Grothendieck spectral sequence \cite[Ch. 0, (12.2.4) and (12.2.5.1)]{[EGAIII1]}. Then ${\mathrm R}^1\eta_*(f_*{\sh O}_C) \to {\mathrm R}^1\pi_*{\sh O}_C$ is in fact an isomorphism because $f$ is finite by assumption, so ${\mathrm R}^1f_*{\sh O}_C = 0$. The last canonical isomorphism in \eqref{eqn: relative dual trace} is a fairly elementary case of coherent duality, please see \cite[Theorem (21)]{[Kl80]}. We sketch the proof that the formation of \eqref{eqn: relative dual trace} commutes with base change. The formation of $f_*{\sh O}_C$, as well as the formation of $\mathrm{R}^1\eta_*f_*{\sh O}_C$ commutes with base change \cite[(6.9.9.2)]{[EGAIII2]}. (We are using a very common version of the cohomology and base change theorem, namely the stronger version of \cite[II, \S5, Corollary 3]{[Mu85]} which states that cohomology commutes with base change, not just with passing to fibers. This version still follows from \cite[(6.9.9.2)]{[EGAIII2]}.) Hence all terms involved commute with base change. The map ${\mathrm R}^1\eta_*{\sh O}_{S \times E} \to {\mathrm R}^1\eta_*(f_*{\sh O}_C)$ commutes with base change because the `cohomology and base change' morphism is always functorial in the sheaf -- regardless of whether it is an isomorphism or not. The edge map also commutes with base change, and we assume this must be well-known to experts, although we do not know a general statement or a reference. In our situation, it is possible to proceed as follows. If the base change map $S' \to S$ is affine and $S$ and $S'$ are separated, then, choosing an affine open cover of $E \times S$, and taking advantage of the fact that both $f$ and $E \times S' \to E \times S$ are affine, it is possible to check the claim by brute force using \v{C}ech cohomology in the form of \cite[Ch. III, Proposition 8.7]{[Ha77]}. However, the claim is local on both $S$ and $S'$, so we could have even assumed that $S$ and $S'$ are affine schemes. (A more conceptual reason why \eqref{eqn: relative dual trace} ought to commute with base change is given by \cite[Proposition 1.3.(b) and (c)]{[LLR04]}.) 

In conclusion, we obtain an ${\modu O}_{\modu K}$-module homomorphism from ${\modu O}_{\modu K}$ to the dual of the Hodge bundle over ${\modu K}$; dualizing, we obtain the trace map on ${\modu K}$, and pulling back we obtain the trace map on ${\modu K}_1$
\begin{equation}\label{eqn: relative trace}
\mathrm{tr}: {\mathbb E} \longrightarrow {\mathscr O}_{{\modu K}_1}.
\end{equation}
(We are implicitly using the well-known fact that forgetting the marked points and stabilizing does not change the Hodge bundles \cite[Exercise 25.3.1]{[mirror-symmetry]}.) We should also clarify that the conceptual issues caused by the noetherian hypothesis in the cohomology and base change theorem can be circumvented using the alternate definition of quasi-coherent sheaves on Deligne-Mumford stacks in \cite[Ch. XIII, \S2, starting on p. 336]{[ACG11]}. However, knowing that \eqref{eqn: relative dual trace} commutes with more than just \'{e}tale/flat base change will be necessary later in this section. 

Second, if $(f:C \to E \times S,\rho: E \times S \to C)$ is an object in ${\modu K}_1(S)$, with projection $\pi:C \to S$, the adjoint of the identity map $\omega_{C/S} \to \pi^* \rho^* \omega_{C/S}$ is 
\begin{equation}\label{eqn: hodge to taut over S}
\pi_* \omega_{C/S} \longrightarrow \rho^* \omega_{C/S}.
\end{equation}
It is not hard to check that the formation of \eqref{eqn: hodge to taut over S} commutes with base change. Then \eqref{eqn: hodge to taut over S} induces the `evaluation at the marking' ${\modu O}_{{\modu K}_1}$-module homomorphism 
\begin{equation}\label{eqn: hodge to tautological}
\gamma: {\mathbb E} \longrightarrow {\sh L}.
\end{equation}
In conclusion, \eqref{eqn: relative trace} and \eqref{eqn: hodge to tautological} complete the construction of \eqref{eqn: trace + eval}. We take $\widetilde{\modu Q}$ to be the degeneracy locus of \eqref{eqn: trace + eval}, that is, the closed substack defined by the induced ideal sheaf ${\sh H}\!om({\sh L}, \det {\mathbb E}) \hookrightarrow {\sh O}_{{\modu K}_1}$. This completes our rather ad hoc construction of the partially compactified moduli space. 

If $\smash{ {\modu M}_{2,1}^{\mathrm{sm}}(E,d) \subset {\modu K}_1}$ is the open substack of stable maps with smooth sources, we take $\smash{ {\modu Q} }$ to be the restriction of $\smash{\widetilde{\modu Q} }$ to $\smash{ {\modu M}_{2,1}^{\mathrm{sm}}(E,d) }$, and we have a closed immersion
\begin{equation}\label{eqn: Q closed immersion}
{\modu Q} \hookrightarrow  {\modu M}_{2,1}^{\mathrm{sm}}(E,d).
\end{equation}

\begin{rmk}\label{rmk: testing quasi-traceless on complex points}
If $S$ is reduced and of finite type over $\cc$, and $(f:C \to E \times S, \rho:E \times S \to C)$ is a stable map in $\smash{ {\modu M}_{2,1}^{\mathrm{sm}}(E,d) (S)}$, then, in order to show that this is an object of ${\modu Q}(S)$, it suffices to check that all fibers over $\cc$-points of $S$ are quasi-traceless in the sense of Definition \ref{defn: quasi-traceless}. 
\end{rmk}

Throughout much of this section, we will be working locally at the stable map in $\smash{\widetilde{\modu Q}(\cc)}$ described below. Let $q \in E(\cc)$ and $\Lambda_1,\Lambda_2 \subseteq \mathrm{H}_1(E,\zz)$ two sublattices whose indices add up to $d$. The \emph{degenerate quasi-traceless cover} corresponding to $q,\Lambda_1,\Lambda_2$ is the $1$-marked stable map $h: E_1 \cup {\mathbb P}^1 \cup E_2 \to E $ (the rational component contains the marking $x$, and is in the middle of the chain of 3 components) which contracts the rational component to $q \in E (\cc)$ and restricts on $E_i$ to the isogeny $E_i \to E$ corresponding to $\Lambda_i$, for $i=1,2$. Let $\smash{ y \in {\modu K}_1(\cc) }$ be the corresponding point; it is clear that $\smash{ y \in \widetilde{\modu Q}(\cc) }$ because $\gamma(y) = 0$. We begin with some local calculations. 

\begin{lem}\label{lem: quasi-traceless specific local calculation}
If $\mathrm{ev}:\overline{\modu M}_{2,1}(E,d) \to E$ is the evaluation map, then:
\begin{enumerate}
\item $\mathrm{ev}$ is smooth of relative dimension $2$ at $y$; and 
\item $y$ is an \emph{infinitesimally isolated} vanishing point of $\gamma|\mathrm{ev}^{-1}(q)$.
\end{enumerate}
\end{lem}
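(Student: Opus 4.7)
My plan is to analyze the stable map $y$ via the clutching description of its boundary stratum. Write $h\colon C_0 = E_1 \cup {\mathbb P}^1 \cup E_2 \to E$ for this map, with nodes $p_1, p_2$. The stratum containing $y$ is the image of the natural morphism from $\overline{\modu M}_{1,1}(E,d_1) \times_E \overline{\modu M}_{0,3}(E,0) \times_E \overline{\modu M}_{1,1}(E,d_2)$ into $\overline{\modu M}_{2,1}(E,d)$, where the fiber products impose agreement between the evaluations at the attaching points on each side. The three factors have dimensions $1,1,1$ and the two matching conditions impose two constraints, so the stratum is one-dimensional, parameterized by $q' := f({\mathbb P}^1) \in E$. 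Adjoining the two node-smoothing directions $\partial_{s_1}, \partial_{s_2}$ yields a natural splitting $T_y = T_y(\text{stratum}) \oplus \langle \partial_{s_1}, \partial_{s_2}\rangle$ of dimension $3$, which matches the expected dimension $2g-2+n = 3$.

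For part (1), smoothness of $\overline{\modu M}_{2,1}(E,d)$ at $y$ follows from the reduced obstruction theory of \cite{[KT14]} already used in the proof of Proposition \ref{prop: relatively smooth}: the reduced obstructions vanish for stable maps to an abelian variety, so smoothness is forced once the tangent space has the expected dimension. Surjectivity of $d\mathrm{ev}_y \colon T_y \to T_q E$ is witnessed by translating the target $E$: the resulting one-parameter family maps to an arbitrary element of $T_q E$. Combined, $\mathrm{ev}$ is smooth of relative dimension $2$ at $y$.

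For part (2), the decomposition of $T_y$ together with the fact that smoothing a node does not move the image of the marking $x$ at first order (as $x$ is a smooth point of the rational component away from the two nodes) gives $T_y \mathrm{ev}^{-1}(q) = \langle \partial_{s_1}, \partial_{s_2}\rangle$. Moreover $\gamma$ vanishes identically on the boundary stratum: every global section of $\omega_{C_0}$ has the form $(\eta_1,\eta_2)$ with $\eta_i \in H^0(\omega_{E_i})$, and the residue matching condition forces the middle-component restriction to be zero, so its value at $x$ is zero. Hence $d\gamma_y$ annihilates $T_y(\text{stratum})$, and the claim reduces to showing that the $2 \times 2$ matrix with $(i,j)$-entry $d\gamma_y(\partial_{s_i})(\eta_j)$ is non-degenerate.

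To compute this matrix I use the local model $uv = s_1$ of the smoothing at $p_1$ (with $u$ a local coordinate on ${\mathbb P}^1$ and $v$ on $E_1$). A section of the relative dualizing sheaf extending $(\eta_1,0)$ near the node has the local form $\eta_1(p_1)\,dv + O(v)$; substituting $v = s_1/u$ on the ${\mathbb P}^1$-branch converts this to $-\eta_1(p_1)\,s_1\,u^{-2}\,du + O(s_1^2)$, so evaluating at the marking $x$ with local coordinate $u_0 \neq 0$ yields a first-order contribution proportional to $\eta_1(p_1)$. The same extension of $(0,\eta_2)$ is identically zero near $p_1$, so the $(1,2)$-entry vanishes; a symmetric calculation at $p_2$ handles $\partial_{s_2}$. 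The matrix is therefore diagonal with entries proportional to $\eta_1(p_1)$ and $\eta_2(p_2)$, both nonzero since $\eta_i$ is a nowhere-vanishing regular $1$-form on the elliptic curve $E_i$, and non-degeneracy follows. The main obstacle is this first-order computation on the nodal smoothing, which requires pinning down the correct local model of $\omega_{C/S}$ and extracting the leading coefficient; once this is done, the Hodge-theoretic fact that $1$-forms on elliptic curves are nowhere vanishing delivers the conclusion.
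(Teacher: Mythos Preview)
Your argument for part~(1) has a genuine gap. The reduced obstruction theory of \cite{[KT14]} is set up for maps to \emph{surfaces} via the semiregularity map into $\hh^2({\sh O})$; for an elliptic curve this group vanishes and there is no reduction, so the relevant obstruction theory is the ordinary one. In any case, neither the ordinary nor the reduced theory asserts that obstructions vanish for all stable maps to an abelian variety---what Proposition~\ref{prop: relatively smooth} actually does is first verify that the tangent spaces on the \emph{unramified} locus ${\modu V}$ have the expected dimension, and then deduce vanishing of the (reduced) obstruction space. Your map $h$ contracts a rational component, so that argument does not apply. Concretely, you have only exhibited three tangent directions, establishing $\dim T_y \geq 3$; you have not shown $\dim T_y = 3$. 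Relatedly, your asserted ``natural splitting'' $T_y = T_y(\text{stratum}) \oplus \langle \partial_{s_1},\partial_{s_2}\rangle$ presupposes that the node-smoothing directions in $\hh^1(T_H^{\log}(-x))$ lift to $\mathrm{Def}^1$, which is exactly what is at stake. The paper instead writes down the standard long exact sequence
\[
0 \to \Gamma({\sh O}_H) \to \mathrm{Def}^1 \to \hh^1(T_H^{\log}(-x)) \to \hh^1({\sh O}_H) \to \mathrm{Obs} \to 0
\]
and checks by Serre duality that the penultimate map is surjective, forcing $\mathrm{Obs}=0$ and $\dim\mathrm{Def}^1=3$.

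Your approach to part~(2) via an explicit local computation of $d\gamma_y(\partial_{s_i})(\eta_j)$ on the nodal smoothing is a legitimate alternative to the paper's more abstract transversality argument, and the diagonal-matrix computation is correct in spirit. But it rests on the identification $T_y\mathrm{ev}^{-1}(q) = \langle \partial_{s_1},\partial_{s_2}\rangle$, which in turn rests on the unjustified splitting above. What actually holds (and what the paper proves) is that the image of $\mathrm{Def}^1_q$ in $\hh^1(T_H^{\log}(-x))$ is a $2$-dimensional subspace \emph{transverse} to the equisingular locus $W$, not that it equals the span of any particular smoothing directions. Once that transversality is in hand, your computation---which really shows that $d\gamma$ vanishes on $W$ and is nondegenerate on the quotient $\hh^1(T_H^{\log}(-x))/W$---does finish the job; but you need to supply the transversality statement, and it does not follow from the clutching description alone.
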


To clarify, the second part means that $y^*\gamma: y^*{\mathbb E} \to y^*{\sh L}$ is identically zero, and if $\widetilde{y}:\spec \cc[\epsilon]/(\epsilon^2) \to \mathrm{ev}^{-1}(q)$ restricts to $y$ on $\spec \cc \hookrightarrow \spec \cc[\epsilon]/(\epsilon^2)$ and $\widetilde{y}^*\gamma: \widetilde{y}^*{\mathbb E} \to \widetilde{y}^*{\sh L}$ is identically zero, then $\widetilde{y} = y \circ (\spec \cc[\epsilon]/(\epsilon^2) \to \spec \cc)$.

\begin{proof} Let $\smash{H = E_1 \cup {\mathbb P}^1 \cup E_2}$ be the source of $h$. The space of first order deformations of the stable marked curve $(H,x)$ is $\smash{\mathrm{Ext}^1(\Omega_H(x),{\sh O}_H) =\hh^1(T_H^\mathrm{log}(-x))}$, where $\smash{ T_H^\mathrm{log} = \omega_H^\vee }$. Let $\smash{ \mathrm{Def}^1 }$ and $\mathrm{Obs}$ be the spaces of first order deformations and respectively obstructions of $h$ (with the marking $x$), that is,
$$ \mathbb{E}\mathrm{xt}^i(h^*\Omega_E \to \Omega_H(x), {\sh O}_H) =
\begin{cases}
\mathrm{Def}^1, & i=0 \\
\mathrm{Obs}, & i=1 \\
\end{cases}$$ with $h^*\Omega_E \to \Omega_H(x)$ in degrees $0,1$ \cite{[Be97], [BF97]}. We have an exact sequence
\begin{equation}\label{eqn: deformation theoretic long exact sequence in qt section}
0 \to \Gamma({\sh O}_H) \to \mathrm{Def}^1 \to \hh^1(T_H^\mathrm{log}(-x)) \to \hh^1({\sh O}_H) \to \mathrm{Obs} \to 0,
\end{equation}
in which the third nonzero map is the map on $\hh^1$ induced by the composition of $T_H^\mathrm{log}(-x) \hookrightarrow T_H^\mathrm{log}$ with the differential $\mathrm{d}h: T_H^\mathrm{log} \to h^* T_E = {\sh O}_H$. This composition also corresponds to an element of $\Gamma(\omega_H(x))$. The third nonzero map in \eqref{eqn: deformation theoretic long exact sequence in qt section} is surjective because it is Serre dual to the map $\Gamma(\omega_H) \to \Gamma(\omega_H^{\otimes 2}(x))$, which can be proved injective directly, using that $\omega_H|{\mathbb P}^1 \approx {\sh O}_{{\mathbb P}^1}$ and $\omega_H|E_i \approx {\sh O}_{E_i}(E_i \cap {\mathbb P}^1)$. It follows that $\mathrm{Obs} = 0$ and $\dim_\cc \mathrm{Def}^1 = 3$, so $\overline{\modu M}_{2,1}(E,d)$ is smooth of dimension $3$ at $y$. Moreover, since $\Gamma({\sh O}_H) = \Gamma(h^*T_E)$ appears in \eqref{eqn: deformation theoretic long exact sequence in qt section} as the space of first order deformations of $h$ that don't change the source $H$, the first nonzero map in \eqref{eqn: deformation theoretic long exact sequence in qt section} composed with $\mathrm{d}(\mathrm{ev}):\mathrm{Def}^1 \to T_qE$ is the canonical identification $\Gamma({\sh O}_H) = \Gamma(h^*T_E) = T_qE$, so $\mathrm{d}(\mathrm{ev})$ is injective, which completes the proof of the first part. 

Let $\smash{ \mathrm{Def}^1_q \subset \mathrm{Def}^1 }$ be the subspace of first order deformations contained in $\smash{ \mathrm{ev}^{-1}(q) }$. The paragraph above shows that $\smash{ \mathrm{Def}^1_q }$ can also be identified with the cokernel of the first nonzero map in \eqref{eqn: deformation theoretic long exact sequence in qt section} and that $\smash{ \mathrm{ev}^{-1}(q) \hookrightarrow \overline{\modu M}_{2,1}(E,d) \to {\mathfrak M}_{2,1} }$ is unramified at $y$, that is, the corresponding map $\smash{ \mathrm{Def}^1_q \to \hh^1(T_H^\mathrm{log}(-x)) }$ on first order deformations at $y$ is injective. Let $\smash{ W \subset \hh^1(T_H^\mathrm{log}(-x)) }$ be the ($2$-dimensional) subspace of first order deformations of $(H,x)$ that preserve the two singularities. Then the second part of the lemma follows from the next two claims, whose verification is left to the reader. First, the image of the map $\smash{ \mathrm{Def}^1_q \to \hh^1(T_H^\mathrm{log}(-x)) }$ is transverse to $W$ (this follows from some more calculations on \eqref{eqn: deformation theoretic long exact sequence in qt section}). Second, if $\gamma$ (now considered on $\smash{ \overline{\modu M}_{2,1} }$ by abuse of language) vanishes identically on a first order deformation $\smash{ v \in \hh^1(T_H^\mathrm{log}(-x)) }$ of $(H,x)$, then $v \in W$. 
\end{proof}

\begin{prop}\label{prop: quasi-traceless local calculation}
The moduli stack $\smash{\widetilde {\modu Q}}$ is smooth of local dimension $2$ at $y$. 
\end{prop}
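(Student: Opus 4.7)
The target sheaf ${\sh L} \oplus {\sh O}_{{\modu K}_1}$ has the same rank as the Hodge bundle ${\mathbb E}$ (both rank $2$), so $\widetilde{\modu Q}$ is the Cartier-type subscheme cut out by the determinantal section $\det(\gamma \oplus \mathrm{tr})$ of the line bundle ${\sh H}\!om(\det {\mathbb E},{\sh L})$ on ${\modu K}_1$. By Lemma \ref{lem: quasi-traceless specific local calculation}(1), ${\modu K}_1$ is smooth of dimension $3$ at $y$, so the proposition reduces to showing that this section vanishes at $y$, is not identically zero near $y$, and has non-vanishing differential at $y$.

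First I would compute $\gamma(y)$ and $\mathrm{tr}(y)$ explicitly. The stabilization of $H = E_1 \cup {\mathbb P}^1 \cup E_2$ (obtained by contracting the rational bridge) is $\bar H = E_1 \cup_n E_2$, and ${\mathbb E}|_y \cong \Gamma(\omega_{\bar H})$ is $2$-dimensional. Since $H^0(E_i,\omega_{E_i}(n_i)) = H^0(E_i,\omega_{E_i})$ --- an elliptic curve admits no meromorphic function with a single simple pole --- a section of $\omega_{\bar H}$ is the same thing as a pair of regular differentials $(\eta_1,\eta_2)$ on $(E_1,E_2)$. Transported back to $H$, such a section necessarily vanishes identically on the contracted ${\mathbb P}^1$, and in particular evaluates to zero at the marked point $x$. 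Hence $\gamma(y) = 0$. On the other hand, writing $\eta_i = \alpha_i h_i^*\omega$ for a chosen regular differential $\omega$ on $E$, the standard trace formula along the étale isogeny $h_i:E_i \to E$ of degree $d_i = [\mathrm{H}_1(E,\zz):\Lambda_i]$ gives $\mathrm{tr}(y)(\eta_1,\eta_2) = (d_1 \alpha_1 + d_2 \alpha_2)\omega$; since $d_1,d_2 > 0$, $\mathrm{tr}(y)$ is surjective.

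Surjectivity of $\mathrm{tr}$ is an open condition, so near $y$ the subsheaf $K := \ker \mathrm{tr} \subset {\mathbb E}$ is a line subbundle. Picking a local frame $\{e_1,e_2\}$ of ${\mathbb E}$ near $y$ with $e_2(y)$ spanning $K|_y$, together with local frames for ${\sh L}$ and ${\sh O}_{{\modu K}_1}$, the $2 \times 2$ matrix representing $\gamma \oplus \mathrm{tr}$ has all entries vanishing at $y$ except $\mathrm{tr}(e_1)$. An immediate product-rule computation gives
$$ d\bigl(\det(\gamma \oplus \mathrm{tr})\bigr)\bigm|_y \;=\; -\,\mathrm{tr}(e_1)(y) \cdot d\bigl(\gamma(e_2)\bigr)\bigm|_y, $$
so smoothness of $\widetilde{\modu Q}$ at $y$ of codimension $1$ is equivalent to the non-vanishing of $d(\gamma|_K)(y) \in T_y^*{\modu K}_1 \otimes (K^\vee \otimes {\sh L})|_y$; that is, we are reduced to showing that the section $\gamma|_K$ of the line bundle $K^\vee \otimes {\sh L}$ has nonzero derivative at $y$.

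To produce this derivative, I would feed in Lemma \ref{lem: quasi-traceless specific local calculation}(2). By that lemma, the map $d\gamma: T_y\mathrm{ev}^{-1}(q) \to \mathrm{Hom}({\mathbb E}|_y,{\sh L}|_y)$ on first-order deformations of $y$ inside the fiber $\mathrm{ev}^{-1}(q)$ is injective; since both source and target are $2$-dimensional (the source by part (1) of the same lemma, the target by direct rank count), $d\gamma$ is an isomorphism. Composing with the surjective restriction $\mathrm{Hom}({\mathbb E}|_y,{\sh L}|_y) \twoheadrightarrow \mathrm{Hom}(K|_y,{\sh L}|_y)$ yields a surjection $T_y\mathrm{ev}^{-1}(q) \twoheadrightarrow \mathrm{Hom}(K|_y,{\sh L}|_y)$, which is precisely the assertion that $d(\gamma|_K)(y) \neq 0$. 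The main delicate step here is the explicit analysis of $\gamma$ and $\mathrm{tr}$ on the reducible stabilized cover $\bar H$ --- once $\gamma(y) = 0$ and $\mathrm{tr}(y) \neq 0$ are in hand, the determinantal-to-linear reduction feeds cleanly into the infinitesimal-isolation input of Lemma \ref{lem: quasi-traceless specific local calculation}(2), and the dimension count from part (1) of the same lemma closes out the argument.
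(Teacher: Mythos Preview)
Your proof is correct and follows essentially the same approach as the paper: both reduce to showing that the determinantal section has nonzero differential at $y$ by combining $\gamma(y)=0$, $\mathrm{tr}(y)\neq 0$, and the infinitesimal isolation of Lemma~\ref{lem: quasi-traceless specific local calculation}(2) via a product-rule computation. The paper packages this into a separate general lemma (your matrix calculation in the adapted frame $e_1,e_2$ is exactly the paper's $2\times 2$ Jacobian identity in different coordinates) and phrases the conclusion slightly more strongly as smoothness of $\mathrm{ev}|\widetilde{\modu Q}$ relative to $E$, but the content is the same.
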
 

\begin{proof}
In light of Lemma \ref{lem: quasi-traceless specific local calculation} and the easily checked fact that $\mathrm{tr}(y) \neq 0$, the slightly stronger fact that $\smash{ \mathrm{ev}|\widetilde {\modu Q} }$ is smooth of relative local dimension $1$ at $y$ follows from Lemma \ref{lem: quasi-traceless general local calculation} below, stated separately (with independent notation) for clarity.

\begin{lem}\label{lem: quasi-traceless general local calculation}
Let ${\modu X}$ and ${\modu Y}$ be finite type Deligne-Mumford stacks over $\cc$, $p \in {\modu X}(\cc)$ a $\cc$-point, $f:{\modu X} \to {\modu Y}$ a $1$-morphism, and ${\sh E},{\sh L}_1,{\sh L}_2$ locally free quasi-coherent ${\sh O}_{\modu X}$-modules of ranks $2$, $1$, and $1$ respectively, with a ${\sh O}_{\modu X}$-module homomorphism 
$$ \phi = \phi_1 \oplus \phi_2: {\sh E} \longrightarrow {\sh L}_1 \oplus {\sh L}_2, $$
such that the following conditions are satisfied:
\begin{enumerate}
\item ${\modu Y}$ is smooth over $\spec \cc$, and $f$ is smooth of relative dimension $2$ at $p$;
\item $p$ is an infinitesimally isolated vanishing point of $\phi_1|f^{-1}(p)$;
\item $\phi_2$ doesn't vanish at $p$, that is, $p^*\phi_2: p^*{\sh E} \to p^*{\sh L}_2$ is nonzero.
\end{enumerate}
Then, if ${\modu Z} \subset {\modu X}$ is the degeneracy locus of $\phi$, the restriction $f|{\modu Z}$ is smooth of local relative dimension $1$ at $p$. 
\end{lem}

\begin{proof}
The statement is \'{e}tale local both on the target and the source, so we may assume that ${\modu X}$, ${\modu Y}$ are schemes (so we will call them $X$ and $Y$), and that ${\sh E}$, ${\sh L}_1$, and ${\sh L}_2$ are all trivial vector bundles. Let 
$$ \phi = \begin{bmatrix} \phi_1 \\ \phi_2 \end{bmatrix} = \begin{bmatrix} a & b \\ c & d \end{bmatrix}, \quad \text{where} \quad a,b,c,d \in \Gamma({\sh O}_X). $$ 
The first assumption implies that $\smash{ {\widehat{\sh O}}_{X,p} \cong  {\widehat{\sh O}}_{Y,f(p)}[\![x,y]\!] } $. The degeneracy locus is the vanishing locus of $\det \phi= ad-bc$, and the observation that
\begin{equation*}
\begin{bmatrix}
\partial_x \det \phi \\
\partial_y \det \phi \\
\end{bmatrix}
=
\underbrace{\begin{bmatrix}
\partial_x a & \partial_x b \\
\partial_y a & \partial_y b \\
\end{bmatrix}
\begin{bmatrix}
d \\
-c \\
\end{bmatrix} }_{\neq {0 \brack 0}\text{ at }p}
+
\underbrace{\begin{bmatrix}
\partial_x c & \partial_x d \\
\partial_y c & \partial_y d \\
\end{bmatrix}
\begin{bmatrix}
-b \\
a \\
\end{bmatrix}}_{= {0 \brack 0} \text{ at }p}
\neq
\begin{bmatrix}
0 \\
0 \\
\end{bmatrix} \text{ at } p
\end{equation*}
comples the proof. \end{proof}

As explained above, Lemma \ref{lem: quasi-traceless general local calculation} completes the proof of Proposition \ref{prop: quasi-traceless local calculation}. 
\end{proof}

Recall the discussion in \S\ref{sss: factorizations through isogenies of elliptic curves}. The decomposition \eqref{eqn: total decomposition elliptic curve} induces decompositions 
\begin{equation}\label{eqn: Q decomp} {\modu Q} = \bigsqcup_{[\hh_1(E,\zz):\Lambda] | d} {\modu Q}(\Lambda) \quad \text{and} \quad \widetilde{\modu Q} = \bigsqcup_{[\hh_1(E,\zz):\Lambda] | d} \widetilde{\modu Q}(\Lambda), \end{equation}
where ${\modu Q}(\Lambda) \subseteq \widetilde{\modu Q}(\Lambda) \subseteq \overline{\modu M}_{2,1}(E,d;\Lambda)$.

\begin{thm}\label{thm: quasi-traceless final theorem}
The moduli space ${\modu Q}(\Lambda_1+\Lambda_2)$ is irreducible of dimension $2$ and contains the point $y$ in its closure relative to $\smash{{\modu K}_1}$, namely $\smash{ \widetilde{\modu Q}(\Lambda_1+\Lambda_2) }$. 
\end{thm}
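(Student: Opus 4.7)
The plan has three stages: (1) verify that $y$ lies in $\widetilde{\modu Q}(\Lambda_1+\Lambda_2)$; (2) apply Proposition \ref{prop: quasi-traceless local calculation} to produce a unique $2$-dimensional irreducible component ${\modu Z}$ through $y$ whose intersection with ${\modu Q}(\Lambda_1+\Lambda_2)$ is open and dense in it; and (3) upgrade this local picture to global irreducibility via the same degeneration.

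For (1), the maximal factorization of the underlying stable map $h$ is exactly $\Lambda_1+\Lambda_2$, since each elliptic tail $E_i\to E$ factors through the isogeny corresponding to $\Lambda_i$ and $\Lambda_1+\Lambda_2$ is the smallest lattice containing both. A residue computation on $\omega_H$ (using that $\omega_H|_{{\mathbb P}^1}\cong{\sh O}_{{\mathbb P}^1}$ with both node residues forced to vanish, because the sections on the elliptic tails are the pole-free translation-invariant forms $a_i\omega_{E_i}$) shows that every section of $\omega_H$ vanishes identically on the contracted rational component. Hence the evaluation $\gamma$ is zero at $y$, placing $y$ in the degeneracy locus of $\gamma\oplus\mathrm{tr}$, i.e.\ in $\widetilde{\modu Q}(\Lambda_1+\Lambda_2)$. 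For (2), Proposition \ref{prop: quasi-traceless local calculation} and the decomposition \eqref{eqn: Q decomp} imply that $\widetilde{\modu Q}(\Lambda_1+\Lambda_2)$ is smooth of dimension $2$ at $y$, so there is a unique irreducible component ${\modu Z}$ through $y$, with $\dim{\modu Z}=2$. The $y$-type stratum (same dual graph as $y$) inside $\widetilde{\modu Q}(\Lambda_1+\Lambda_2)$ is only $1$-dimensional, parameterized by $q\in E$ (the other data being rigid once the marking $r$ on the bubble uses up the $3$-dimensional automorphism group of ${\mathbb P}^1$ together with the two nodes); a short direct check shows that the "one node smoothed" stratum generically fails the degeneracy condition and so does not contribute a $2$-dimensional piece. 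Therefore ${\modu Z}\cap{\modu Q}(\Lambda_1+\Lambda_2)$ is open and dense in ${\modu Z}$.

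The main obstacle is (3), showing $\widetilde{\modu Q}(\Lambda_1+\Lambda_2)={\modu Z}$. The plan is first to reduce to $\Lambda_1+\Lambda_2=\mathrm{H}_1(E,\zz)$ via the isogeny $E'\to E$ corresponding to $\Lambda_1+\Lambda_2$; this canonically equivalences $\widetilde{\modu Q}_E(\Lambda_1+\Lambda_2)$ with the analogous moduli space of sharply primitive quasi-traceless covers of $E'$, and the trace intertwines via functoriality $\mathrm{Tr}_f=\mathrm{Tr}_\pi\circ\mathrm{Tr}_{f'}$ with $\mathrm{Tr}_\pi$ an isomorphism. In the sharply primitive case, the plan is to combine (i) the irreducibility of the Hurwitz space of sharply primitive genus $2$ degree $d$ covers of $E$ (classical, via a monodromy/Hurwitz-move argument) with (ii) connectedness of the generically $2$-to-$1$ forgetful map ${\modu Q}(\mathrm{H}_1(E,\zz))\to{\modu K}$ whose fiber over a smooth cover $(C,f)$ is the pair of zeros of a generator of $\ker(\mathrm{Tr})\subseteq\Gamma(\Omega_C)$. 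The decisive input for (ii) is precisely the degeneration to $y$: over the image $h$ of $y$ in ${\modu K}$, the fiber of $\widetilde{\modu Q}$ expands from two points to the entire contracted ${\mathbb P}^1$ (because $\gamma\equiv 0$ on it), so the two generic sheets of the cover necessarily merge at $y$ and therefore, by the smoothness in (2), must lie in the single component ${\modu Z}$. Together with (i) this forces $\widetilde{\modu Q}(\Lambda_1+\Lambda_2)={\modu Z}$, whence by the density from (2) we conclude that ${\modu Q}(\Lambda_1+\Lambda_2)$ is irreducible of dimension $2$ with closure in ${\modu K}_1$ equal to $\widetilde{\modu Q}(\Lambda_1+\Lambda_2)\ni y$.
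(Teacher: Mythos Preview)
Your overall strategy matches the paper's: use irreducibility of the sharply primitive genus-$2$ Hurwitz space together with an analysis of the forgetful map $\phi:\widetilde{\modu Q}(\Lambda_1+\Lambda_2)\to\overline{\modu M}_2(E,d;\Lambda_1+\Lambda_2)$ near $\phi(y)$, then conclude via the smoothness at $y$ from Proposition~\ref{prop: quasi-traceless local calculation}. The paper's execution is a bit more direct: it shows that \emph{every} irreducible component $|{\modu Y}|$ of $\widetilde{\modu Q}(\Lambda_1+\Lambda_2)$ dominates the main (smooth-source) component of $\overline{\modu M}_2(E,d;\Lambda_1+\Lambda_2)$ --- a routine step you do not explicitly address --- then uses that the generic point $\mu$ of the latter specializes to $\phi(y)$ and that $\phi|_{\overline{\modu Y}}$ is proper, so $\overline{\modu Y}$ meets the fiber over $\phi(y)$; finally, that fiber in $\widetilde{\modu Q}$ (or its closure) consists of the single point $y$, so every component contains $y$ and smoothness finishes.

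One point to correct in your version: the fiber of $\widetilde{\modu Q}$ over $\phi(y)$ does not ``expand to the entire contracted ${\mathbb P}^1$''. That ${\mathbb P}^1$ lives in the universal curve, but in the moduli space $\overline{\modu M}_{2,1}$ all positions of the marking on it represent the \emph{same} point $y$, since the automorphisms of ${\mathbb P}^1$ fixing the two nodes act transitively on the rest. Markings on the $E_i$, on the other hand, give points \emph{not} in $\widetilde{\modu Q}$: there $\gamma\oplus\mathrm{tr}$ has full rank, as evaluating $(a_1\omega_1,a_2\omega_2)$ at a point of $E_1$ picks out $a_1$, while $\mathrm{tr}$ is nonzero on $a_2\omega_2$. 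Hence the fiber is the singleton $\{y\}$ --- and this, not positive-dimensionality, is precisely what forces both generic sheets of your $2$-to-$1$ cover to limit to $y$. (The paper summarizes this informally as: $\widetilde{\modu Q}$ is $2$-to-$1$ over the main component and \emph{totally ramified} at $y$.) Your step~(2) stratum analysis and the explicit reduction to the primitive case are fine but unnecessary in the paper's route.
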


Informally, the picture is the following: $\smash{ \widetilde{\modu Q}(\Lambda_1+\Lambda_2) }$ is $2$-to-$1$ over the `main component' of $\smash{ \overline{\modu M}_2(E,d;\Lambda_1+\Lambda_2) }$ via the forgetful morphism, and ramified at $y$. The proof below could be described as showing that it is `totally ramified' at $y$, with no reference to it being $2$-to-$1$.

\begin{proof}
By Proposition \ref{prop: quasi-traceless local calculation}, proving that all irreducible components of $\smash{ \widetilde{\modu Q}(\Lambda_1+\Lambda_2) }$ contain $y$ solves both claims at once, cf. \cite[Proposition 4.16]{[DM69]}. Let $|{\modu Y}|$ be an irreducible component of $\smash{ |\widetilde{\modu Q}(\Lambda_1+\Lambda_2)| }$, and $\smash{ \overline{\modu Y} }$ its closure in $\overline{\modu M}_2(E,d;\Lambda_1+\Lambda_2)$, say, with the reduced structure. Consider the forgetful morphism
\begin{equation}
\phi: \overline{\modu M}_{2,1}(E,d;\Lambda_1+\Lambda_2) \longrightarrow \overline{\modu M}_2(E,d;\Lambda_1+\Lambda_2),
\end{equation}
cf. \S\ref{sss: factorizations through isogenies of elliptic curves}, and Remark \ref{rmk: compatible with forgetful}. It is completely routine to show that $\phi$ maps the generic point of $|{\modu Y}|$ to the generic point of some irreducible component of $\overline{\modu M}_2(E,d;\Lambda_1+\Lambda_2)$ that generically parametrizes stable maps with smooth sources. However, it is well-known that $\overline{\modu M}_2(E,d;\Lambda_1+\Lambda_2)$ has a unique irreducible component that generically parametrizes stable maps with smooth sources, and let $\mu$ be its generic point. (All of \cite{[Ka03], [Bu15], [GK87]} contain much stronger results than the stated one. Strictly speaking, the references above treat the `primitive' case $\Lambda_1 + \Lambda_2 = \hh_1(E,\zz)$ and in slightly different language, but it is easy to reduce to this case. Alternatively, our claim also follows from elementary abelian surface theory.) Thus $\mu$ belongs to the image of $\smash{ \phi|{\overline{\modu Y}} }$. However, $\mu$ clearly specializes to $\phi(y)$, and $\smash{ \phi|{\overline{\modu Y}} }$ is proper, so there exists a $\cc$-point $\smash{ y_0 \in \overline{\modu Y}(\cc) }$ such that $\phi(y_0) = \phi(y)$. However, the only point in $\smash{ \widetilde{\modu Q}(\Lambda_1+\Lambda_2)(\cc) }$ (or its closure) with this property is $y_0 = y$, completing the proof.
\end{proof}

\section{Deformable stable maps to the product surface}\label{sec: deformable stable maps}

\subsection{Introduction}\label{ssec: introduction to deformable maps section} We return to the setup of \S\ref{sec: families of severi varieties}, and in particular Situation \ref{sit: main situation}. A \emph{geometric base change} $(\Delta',o') \to (\Delta,o)$ (or just `base change' if no confusion is possible) is a map $\delta:\Delta' \to \Delta$ from a smooth connected quasi-projective curve $\Delta'$, such that $\delta^{-1}(o) = \{o'\}$ set-theoretically. If $\delta^{-1}(o) = mo'$ scheme-theoretically, we say that $\delta$ has local degree $m$. 

A word on notation: to avoid overloading the letter `C', from now on, we will often drop the source of a stable map and specify only the map. For instance, we could write just $f$ instead of $(C,f)$, and $[f]$ to refer to the corresponding point in the moduli space. 

\begin{defn}\label{defn: deformable}
In Situation \ref{sit: main situation}, we say that a stable map $h$, $[h] \in \overline{\modu M}_g(V_o,\beta_o)(\cc)$, is \emph{deformable} if there exists a geometric base change $(\Delta',o') \to (\Delta,o)$ and a stable map $f$, $[f] \in \overline{\modu M}_g(V/\Delta,\beta)(\Delta')$ i.e. a stable map over $\Delta' \to \Delta$, such that $f_{o'} \approx h$.
\end{defn}

\begin{rmk}\label{rmk: deformable alternative}
Although Definition \ref{defn: deformable} looks rather naive, it is not different from alternate formulations. Note that if there exists a stable map $[\hat{f}] \in \overline{\modu M}_g(V/\Delta,\beta)(k)$ for some $\spec k \to \Delta$ mapping to the generic point of $\Delta$ such that the point $\smash{ [\hat{f}] }$ specializes to $[h]$ (i.e. contains $[h]$ in its closure in $|\overline{\modu M}_g(V/\Delta,\beta)|$), then $h$ is actually deformable in the sense of Definition \ref{defn: deformable} above. The only substantial ingredient needed to prove this is that $\overline{\modu M}_g(V/\Delta,\beta)$ is of finite type, and $\overline{M}_g(V/\Delta,\beta) \to \Delta$ is projective \cite[Theorem 1.1]{[FP95]}, since $V \to \Delta$ is projective. 
\end{rmk}

We can now state the deformability criterion mentioned in the introduction. 

\begin{thm}\label{thm: necessary condition to deform}
Consider a stable map $h$, $[h] \in {\modu T}_g(V/\Delta,\beta)_o(\mathbb C)$, with no contracted components of strictly positive genus. If $h$ is deformable (cf. Definition \ref{defn: deformable}), then
\begin{enumerate}
\item any positive genus irreducible component $C$ of the source other than the `backbone' is a leaf of the dual tree; and, moreover
\item for any $C$ as above, if $r \in C$ is the sole node of the source of $h$ on $C$, then the restriction of $h$ to $C$ composed with $V_o \to F$ is quasi-traceless onto $F$ relative to $r$, in the sense of Definition \ref{defn: quasi-traceless}.
\end{enumerate}
(The `backbone' is the unique irreducible component of the source that maps isomorphically onto a fiber of $V_o \to F$.) 
\end{thm}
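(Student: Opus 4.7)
The plan, inspired by X. Chen's K3 argument in \cite{[Ch02]}, is to identify a suitable projective normal bundle in $V$ with the Atiyah surface of Section \ref{sec: quasi-traceless covers section}, and then to extract the quasi-traceless condition by lifting the deformation through a blowup. By Remark \ref{rmk: deformable alternative}, the hypothesis furnishes an actual family $f: \mathcal{C} \to V$ over some base change $(\Delta', o') \to (\Delta, o)$ with $f_{o'} \cong h$; after further base change and semistable reduction, I may assume $\mathcal{C}$ is a smooth surface.

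Fix a non-backbone positive-genus component $T$ of $C_{o'}$. Since $h|_T$ is neither the backbone nor contracted, its image lies in some fiber $F_e := \{e\} \times F$ of $V_o \to E$. From
\[ 0 \to N_{F_e/V_o} \to N_{F_e/V} \to N_{V_o/V}|_{F_e} \to 0, \]
the bundle $N_{F_e/V}$ is an extension of ${\sh O}_F$ by ${\sh O}_F$ (trivial outer terms, since $V_o$ is a product and is a fiber of the smooth morphism $V \to \Delta$). The extension class is the off-diagonal component of the Kodaira--Spencer class of $V \to \Delta$ at $o$ restricted to $F_e$, and is nonzero for any $\Delta \to M$ meeting $\partial M$ transversally at $o$, by the construction of $M$ in \S\ref{ssec: Severi stacks}. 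Thus $N_{F_e/V}$ is the Atiyah bundle on $F$ (the analog of ${\sh V}$ in \eqref{eqn: Atiyah sequence}), and the exceptional divisor $S := \mathbb{P}(N_{F_e/V})$ of the blowup $X := \mathrm{Bl}_{F_e} V$ is the Atiyah surface over $F$, with distinguished section $E_\infty$ coming from the subbundle $N_{F_e/V_o}$.

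A local calculation at each node of $C_{o'}$ shows the pullback of the ideal of $F_e$ to $\mathcal{C}$ is locally principal, so $f$ lifts to $\tilde{f}: \mathcal{C} \to X$, and $\tilde{f}|_T: T \to S$ composes with the ruling $S \to F$ to recover $\pi_F \circ h|_T$. A node of $T$ with the backbone $B$ maps to $E_\infty$, since the tangent of $B$ at the image of such a node is transverse to $F_e$ within $V_o$ and hence sits in the subbundle $N_{F_e/V_o}$; by contrast, nodes of $T$ with ghost components or with other positive-genus non-backbone components whose image lies in $F_e$ do \emph{not} map to $E_\infty$, as the local ratio $(f^*z : f^*t)$ stays finite. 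Since $S \setminus E_\infty$ contains no complete curves by Remark \ref{rmk: no complete curves on open Atiyah surface}, the image $\tilde{f}|_T(T)$ must meet $E_\infty$, forcing $T$ to have at least one node with $B$ and, by the tree property of the dual graph, exactly one, call it $r$. I then plan to argue via a degree/multiplicity calculation on $S$, matching $\deg \tilde{f}|_T^* {\sh O}_S(E_\infty)$ against global invariants of $h|_T$, that $\tilde{f}|_T^{-1}(E_\infty) = \{r\}$ scheme-theoretically, at which point Lemma \ref{lem: connection to Atiyah surface} produces (2). For (1), any additional node $r'$ of $T$ adjacent to some component $C'$ would trigger a parallel Atiyah-lift obligation --- either on $C'$ over its own fiber $F_{e'}$, or on $T$ itself at $r'$ --- incompatible with the preimage calculation above, yielding a contradiction.

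The main obstacle is the scheme-theoretic identity $\tilde{f}|_T^{-1}(E_\infty) = \{r\}$: controlling both the multiplicity at $r$ and the absence of non-nodal preimages on $T$. I anticipate this needing a component-by-component analysis on $C_{o'}$ combined with further degenerations that smooth individual nodes, tracking how the Atiyah-surface picture evolves, in the spirit of Chen's bubbling argument; presumably this is the substance of Section \ref{ssec: bubbling up}.
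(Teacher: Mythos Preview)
Your overall mechanism --- lift to a blowup whose exceptional divisor is the Atiyah surface and invoke Lemma \ref{lem: connection to Atiyah surface} --- is indeed what the paper uses, but the single-blowup shortcut has a real gap. The claim that $N_{F_e/V}$ is the Atiyah bundle fails in general: Situation \ref{sit: main situation} only asks that $\tau^{-1}(\partial M) = \{o\}$ \emph{set-theoretically}, and the base change $(\Delta',o') \to (\Delta,o)$ furnished by deformability typically has local degree $>1$, so any transversality to $\partial M$ is destroyed. When the relevant extension class vanishes, the exceptional divisor of $\mathrm{Bl}_{F_e} V_{\Delta'}$ is the trivially ruled surface $T = {\mathbb P}^1 \times F$, not $S$; there is no distinguished $E_\infty$, Remark \ref{rmk: no complete curves on open Atiyah surface} says nothing, and your argument stops. (The paper explicitly allows both $T$ and $S$ as exceptional divisors.) Even when you do get $S$, a single blowup does not produce the scheme-theoretic identity $\tilde f|_T^{-1}(E_\infty) = \{r\}$: if $D_o$ contains $F_e$ with multiplicity $\geq 2$, the proper transform is still non-reduced over $o'$ and the lift can meet $E_\infty$ with excess multiplicity or at extra points. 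Your claim about which nodes land on $E_\infty$ is likewise sensitive to these multiplicities.

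The paper's remedy is to iterate: work with the image divisor $D \subset V$ rather than the source, and repeatedly blow up simple F-curves, building tree-like ``expansions'' (Definitions \ref{defn: drawer} and \ref{defn: expansion}) whose central fiber is a normal-crossings tree of copies of $T$ and $S$. Each step (Lemma \ref{lem: bubbling up step}) strictly reduces the non-reducedness of $D'_{o'}$, with termination governed by a Newton--Puiseux computation (Lemma \ref{lem: eventually lemma}). Once $D'_{o'}$ is reduced (Corollary \ref{cor: expansions conclusion}), one performs nodal reduction for the map into the expansion; by Remark \ref{rmk: addition to conclusion of expansions} the positive-genus non-backbone components of the resulting central fiber then land either on simple F-curves (isogenies, trivially quasi-traceless) or on Atiyah-surface leaves where they meet $E_\infty$ at a single reduced point, so Lemma \ref{lem: connection to Atiyah surface} applies cleanly and their descendants in the dual tree are forced to be rational and contracted. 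Both (1) and (2) fall out of this structure simultaneously. The ``main obstacle'' you flag is thus not a degree calculation on one blowup but precisely the bubbling machinery of \S\ref{ssec: bubbling up}.
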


It is quite remarkable that the criterion is nontrivial even in the case $g=d+1$, when the Severi varieties are completely trivial. 

The technique to prove Theorem \ref{thm: necessary condition to deform} is similar to the iterative blowup of the elliptic fibers in \cite{[Ch02]}. The main difference is that we will have to consider not only `chain-like expansions' \cite[Figure 2]{[Ch02]}, but also the more general `tree-like expansions' in Figure A below. The paragraph \S\ref{ssec: algebraic bubbling up} below contains the algebraic fact underlying the semistable reduction procedure in \S\ref{ssec: bubbling up}. It may be skipped and referred back to as necessary -- it will become much easier to read once its geometric meaning becomes clear in the proof of Lemma \ref{lem: bubbling up step}.

\subsubsection{Algebraic preliminaries}\label{ssec: algebraic bubbling up} Let $A$ be an integral domain of characteristic $0$ and $R = A [\![t ]\!]$ the ring of formal power series in $A$. Fix $m \in \zz_+$, and
$$ W_m(A) = \left\{ p(t,x) = x^m + \sum_{j=1}^m \alpha_j(t) x^{m-j} \in R[x]: \alpha_j(0)=0 \text{ for all } j \right\}. $$
First, for $p = p(t,x) \in W_m(A)$, let
\begin{equation}
\nu(p) =\begin{cases}
t^{-m}p(t,tx) & \text{ if } t^{-m}p(t,tx) \in R[x], \\
\text{[lose]} & \text{ otherwise.} \\
\end{cases}
\end{equation}
Second, for $q = q(t,x) \in R[x]$, let
\begin{equation}
\tau(q) =\begin{cases}
q(t,x+\alpha) & \text{ if there exists $\alpha \in A$ such that }q(t,x+\alpha) \in W_m(A), \\
\text{[win]} & \text{ otherwise. } \\
\end{cases}
\end{equation}
Note that the $\alpha$ in the definition of $\tau$ is unique if it exists. 

\begin{lem}\label{lem: eventually lemma}
Let $p(t,x) \in W_m(A)$ which is not a perfect $m$-th power. Then there exists a positive integer $\mu$ such that the sequence $ r, \nu(r), \tau(\nu(r)), \nu(\tau(\nu(r))), 
\ldots $ 
starting with $r(t,x) = p(t^\mu,x)$ eventually ends with $\mathrm{[win]}$.
\end{lem}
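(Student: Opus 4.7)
My plan is to prove the contrapositive: if the sequence $r, \nu(r), \tau(\nu(r)), \ldots$ never terminates with $\text{[win]}$, then $p$ must be a perfect $m$-th power in $R[x]$. The key will be to use the $\text{[win]}$-avoiding shifts performed by $\tau$ to build a common root $\sigma \in A[\![t]\!]$ of $r(t,x) := p(t^\mu, x)$. The first step is to fix $\mu$ using the Newton-Puiseux theorem over $\bar K$, where $K = \mathrm{Frac}(A)$: one chooses $\mu$ so that $r$ factors completely as $\prod_{k=1}^m(x-\rho_k)$ with $\rho_k \in \bar K[\![t]\!]$. Since $r \in W_m(A)$ forces $r(0,x)=x^m$, each root vanishes at $t=0$, so $v_t(\rho_k) \geq 1$ in the integer $t$-adic valuation.

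Next, assume for contradiction that the algorithm applied to $r$ never wins. Let $p_n \in W_m(A)$ denote the polynomial obtained after $n$ rounds of $\nu$ followed by $\tau$, and let $\alpha_n \in A$ be the shift performed by the $n$-th application of $\tau$. An easy induction unravelling the two operations yields
\begin{equation*}
  p_n(t,x) \,=\, t^{-nm}\,r\bigl(t,\,t^n x + \sigma_n\bigr) \,=\, \prod_{k=1}^m\bigl(x - t^{-n}(\rho_k - \sigma_n)\bigr), \qquad \sigma_n := \sum_{i=1}^n t^i\alpha_i,
\end{equation*}
and the membership $p_n \in W_m(A)$ is equivalent to the cumulative valuation estimate $v_t(\rho_k - \sigma_n) \geq n+1$ for every $k$. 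Under this estimate, the roots $t^{-(n+1)}(\rho_k-\sigma_n)$ of $\nu(p_n)$ lie in $\bar K[\![t]\!]$ with nonnegative valuation, so the $j$-th elementary symmetric polynomial in those roots has $t$-valuation $\geq j$; since these are (up to sign) the coefficients of $\nu(p_n)$, we automatically get $\nu(p_n) \in R[x]$. Thus the sequence never hits $\text{[lose]}$, and the $p_n$ are well-defined for every $n \geq 0$.

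Since $(\sigma_n)$ is Cauchy in the $t$-adic topology on $A[\![t]\!]$, it converges to some $\sigma \in A[\![t]\!]$, and passing $v_t(\rho_k-\sigma_n) \geq n+1$ to the limit yields $\rho_k = \sigma$ for every $k$, hence $r(t,x) = (x-\sigma(t))^m$ in $A[\![t]\!][x]$. Comparing the coefficient of $x^{m-1}$ gives $-m\sigma(t) = \alpha_1(t^\mu)$, the coefficient of $x^{m-1}$ in $r$; combined with $\sigma \in A[\![t]\!]$ and $\mathrm{char}\, A = 0$ (so $m \neq 0$ in $A$), this forces $\sigma(t) = \tilde\sigma(t^\mu)$ for a unique $\tilde\sigma \in A[\![s]\!]$. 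Pulling back through the injection $A[\![s]\!] \hookrightarrow A[\![t]\!]$, $s \mapsto t^\mu$, then yields $p(s,x) = (x - \tilde\sigma(s))^m$ in $R[x]$, contradicting the hypothesis. The main obstacle I anticipate is the inductive bookkeeping in the second paragraph: one has to simultaneously track how the roots transform under $\nu$ and $\tau$, identify $W_m(A)$-membership with the cumulative valuation bound $v_t(\rho_k - \sigma_n) \geq n+1$, and use that same bound to keep $\nu(p_n)$ inside $R[x]$ and rule out $\text{[lose]}$. Once this induction is set up, the limit step and the descent from $t^\mu$ back to $t$ are routine.
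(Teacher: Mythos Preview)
Your proof is correct and follows essentially the same route as the paper's: both factor $r(t,x)=p(t^\mu,x)$ over $\overline{K}[\![t]\!]$ via Newton--Puiseux and identify termination in $\mathrm{[win]}$ with the roots eventually diverging (the paper names the terminal step $N$ directly, you run the contrapositive and even fill in the $\mathrm{[lose]}$-avoidance that the paper leaves implicit). One small slip to fix: when you say the $j$-th elementary symmetric function in the roots of $\nu(p_n)$ has $t$-valuation $\geq j$, you mean the roots of $p_n$ (valuation $\geq 1$), not of $\nu(p_n)$ (valuation $\geq 0$); with that correction the argument that $\nu(p_n)\in R[x]$ goes through as intended.
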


\begin{proof} 
This is a variation on well-known ideas, so we'll be brief. First, note that if the lemma holds for some extension $A' \supset A$, then it also holds for $A$, so we may reduce to the case when $A = {\mathbb K} = \overline{\mathbb K}$ is an algebraically closed field. By the Newton-Puiseux Theorem and the fact that ${\mathbb K}[\![t]\!]$ is integrally closed, there exist a positive integer $\mu$ and $\phi_1,\ldots,\phi_m \in {\mathbb K}[\![t]\!]$ such that $$  p(t^\mu,x) = \left(x-\phi_1(t) \right) \cdots \left(x-\phi_m(t) \right). $$  
Then it can be shown that $(\tau \nu)^N(r) = \mathrm{[win]}$, where $N$ is minimal with the property that not all $\phi_1,\ldots,\phi_m$ have the same degree $N$ monomial. 
\end{proof}

\subsection{Bubbling up}\label{ssec: bubbling up} Let $T={\mathbb P}^1 \times F$ be the trivial ruled surface over $F$, and $S$ the Atiyah ruled surface over $F$ cf. \S\ref{ssec: quasi-traceless section intro and Atiyah surface}, respectively. ($T$ and $S$ are diffeomorphic, but not isomorphic.)

\begin{defn}\label{defn: drawer}
A \emph{drawer} is a (reduced but typically reducible) surface $X_o$ with simple normal crossing singularities whose dual simplicial complex is a tree, together with a morphism $\xi:X_o \to V_o = E \times F$ such that the following hold: 
\begin{enumerate}
\item [(i)] There exists a distinguished irreducible component $Y_\star$ of $X_o$ such that $\xi$ restricted to $Y_\star$ is an isomorphism onto $V_o$. 
\item [(ii)] For any irreducible component $Y \neq Y_\star$ of $X_o$, there exists $e \in E(\cc)$ such that $\xi(Y) = \{e\} \times F$ and the restriction $\xi|Y:Y \to F = \{e\} \times F$ is the fibration of either $T$ or $S$, so in particular $Y \in \{T,S\}$.
\item [(iii)] If $Y \neq Y'$ are incident irreducible components, then $Y \cap Y'$ is a section of $\xi|Y$ with trivial normal bundle if $Y \neq Y_\star$, or a fiber of $V_o \to E$ if $Y=Y_\star$. 
\end{enumerate}
\end{defn}

Less opaquely, the third condition says that, if $Y \approx S$, $Y \cap Y'$ is the distinguished section, while if $Y \approx T$, then $Y \cap Y'$ is a fiber of the projection to ${\mathbb P}^1$. All curves on $Y_o$ which are either fibers of $Y_\star \cong E \times F \to E$ on $Y_\star$ or fibers of the projection to ${\mathbb P}^1$ of trivially ruled components will be called \emph{F-curves}. The F-curves along which $X_o$ is singular are called \emph{special F-curves}, while the others are called \emph{simple F-curves}. By abuse of notation, F-curves will typically be denoted by $F$ even if they are (isomorphic to, but) different from the usual curve $F$ in Situation \ref{sit: main situation}; we hope this will not cause any confusion. It is automatic from Definition \ref{defn: drawer} and Remark \ref{rmk: no complete curves on open Atiyah surface} that the components isomorphic to $S$ must be leaves of the dual tree.

\begin{defn}\label{defn: expansion}
An \emph{expansion} is a variety $X$ (assumed irreducible) together with a commutative diagram
$ \vcenter{ \hbox{
\begin{tikzpicture}[scale = 0.9]
\node (a1) at (-1,2) {$X$};
\node (a2) at (0,2) {$V$};
\node (b1) at (-1,1) {$\Delta'$};
\node (b2) at (0,1) {$\Delta$};

\draw [->, thin] (a1)--(a2);
\draw [->, thin] (b1)--(b2);
\draw [->, thin] (a1)--(b1);
\draw [->, thin] (a2)--(b2);
\end{tikzpicture} }}$
(from now on abbreviated as $X \to V_{\Delta'}$) such that:
\begin{enumerate}
\item [(i)] The map $\delta:(\Delta',o') \to (\Delta,o)$ is a geometric base change, cf. \S\ref{ssec: introduction to deformable maps section}.
\item [(ii)] The fiber $X_{o'} \to V_o$ of $X \to V$ over $o'$ is a drawer, cf. Definition \ref{defn: drawer}.
\item [(iii)] With notation analogous to that in Definition \ref{defn: drawer}, there exists an isomorphism (denoted by `$=$' below) which makes the diagram below commute
\begin{center}
\begin{tikzpicture}[scale = 0.8]
\node (a1) at (0,1.3)  {$X$};
\node (a2) at (4,1.3)  {$V_{\Delta'}$};
\node (b1) at (0,0)  {$X \backslash \overline{\left(X_{o'} \backslash Y_\star \right)}$};
\node (b2) at (4,0)  {$V_{\Delta'} \backslash \xi(X_{o'} \backslash Y_\star). $};
\draw [->, thin] (a1)--(a2);
\draw [double equal sign distance, thin] (b1)--(b2);
\draw [right hook->, thin] (b1)--(a1);
\draw [right hook->, thin] (b2)--(a2);
\end{tikzpicture}
\end{center}
\end{enumerate}
\end{defn}

Note that we do not require $X$ to be nonsingular in Definition \ref{defn: expansion}.

\begin{lem}
If $X \to V_{\Delta'}$ is an expansion and $F \subset X_{o'}$ is a simple F-curve, then ${\sh N}_{F/X}$ is either the trivial rank two bundle over $F$, or the rank two Atiyah bundle over $F$.
\end{lem}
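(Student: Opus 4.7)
The plan is to set up the short exact sequence of normal bundles
$$ 0 \longrightarrow {\sh N}_{F/X_{o'}} \longrightarrow {\sh N}_{F/X} \longrightarrow {\sh N}_{X_{o'}/X}\big|_F \longrightarrow 0 $$
and show both outer terms are canonically trivial; the lemma will then follow at once from the Atiyah classification of rank two extensions of ${\sh O}_F$ by itself. The first task is to justify using this sequence, i.e. to show that $F$ lies in the smooth loci of both $X_{o'}$ and $X$. By definition, a \emph{simple} F-curve avoids the special F-curves, which are precisely where the SNC drawer $X_{o'}$ fails to be smooth; so $X_{o'}$ is smooth along $F$. Since $X$ is irreducible and $X \to \Delta'$ is dominant with $\Delta'$ a smooth curve, the morphism is flat, so $X_{o'}$ is a Cartier divisor cut out locally by a single regular element $t$. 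Since $t$ is a nonzero element in the domain ${\sh O}_{X,x}$ for each $x \in F$, the standard criterion for regularity in a flat family gives that $X$ is smooth along $F$ as well, and both embeddings are regularly embedded of codimension one.

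For the leftmost term, $F$ lies in a unique irreducible component $Y$ of $X_{o'}$, and as an F-curve it is a fiber of the ruling of that component. If $Y = Y_\star \cong E \times F$ or $Y \cong T = {\mathbb P}^1 \times F$, the surface is a product and the normal bundle of a fiber is trivial. If $Y \cong S$, the Atiyah surface, then $F$ is a fiber of $\varsigma:S \to F$ and its normal bundle is $\varsigma^*T_F|_{\varsigma(F)}$, trivial since $T_F$ is trivial. Thus in all cases ${\sh N}_{F/X_{o'}} = {\sh N}_{F/Y} \cong {\sh O}_F$. For the rightmost term, ${\sh O}_X(X_{o'}) = \pi^*{\sh O}_{\Delta'}(o')$ where $\pi:X \to \Delta'$, which upon restriction to $F \subset X_{o'}$ is pulled back from a single point, hence trivial.

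The sequence above therefore reduces to
$$ 0 \longrightarrow {\sh O}_F \longrightarrow {\sh N}_{F/X} \longrightarrow {\sh O}_F \longrightarrow 0, $$
an extension classified by $\mathrm{Ext}^1_F({\sh O}_F,{\sh O}_F) = \mathrm{H}^1(F,{\sh O}_F) \cong \cc$. If the extension class is zero, then ${\sh N}_{F/X} \cong {\sh O}_F^{\oplus 2}$, the trivial rank two bundle. If it is nonzero, then ${\sh N}_{F/X}$ is the unique (up to isomorphism) nonsplit such extension, i.e.\ the rank two Atiyah bundle on the elliptic curve $F$ as in \eqref{eqn: Atiyah sequence}.

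The only genuine obstacle is the verification that the normal bundle sequence applies, i.e.\ the smoothness of $X$ along $F$; once the definition of ``simple'' F-curve is unwound against the SNC drawer structure, this is routine, and the remainder of the argument is essentially the Atiyah dichotomy applied to the resulting extension.
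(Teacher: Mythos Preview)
Your proof is correct and follows essentially the same approach as the paper: set up the normal bundle sequence, show the outer terms are trivial line bundles, and invoke the Atiyah dichotomy for extensions of ${\sh O}_F$ by itself. The paper uses the chain $F \subset Y \subset X$ (with $Y$ the component containing $F$) rather than your $F \subset X_{o'} \subset X$, but since a simple F-curve lies in the smooth locus of $X_{o'}$ these agree in a neighborhood of $F$; your computation of the right-hand term via ${\sh O}_X(X_{o'}) = \pi^*{\sh O}_{\Delta'}(o')$ is in fact a shade more direct than the paper's route through ${\sh O}_X(Y)|_Y \cong {\sh O}_X(-Y^c)|_Y$.

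One small slip worth flagging: the case $Y \cong S$ in your analysis of ${\sh N}_{F/Y}$ is vacuous. By definition, F-curves are fibers of $Y_\star \to E$ or of the ${\mathbb P}^1$-projection on trivially ruled components; the Atiyah components $S$ carry no F-curves (and the fibers of $\varsigma:S \to E$ are rational, not copies of $F$). This does not affect your argument, since the case simply does not occur.
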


\begin{proof}
Let $Y$ be the irreducible component of $X_{o'}$ which contains $F$, so either $Y \approx T$ or $Y = Y_\star$. We have the short exact sequence
$$ 0 \longrightarrow {\sh N}_{F/Y} \longrightarrow {\sh N}_{F/X} \longrightarrow {\sh N}_{Y/X}|_F \longrightarrow 0. $$
We claim that the two line bundles in the extension above are trivial. It is trivial that ${\sh N}_{F/Y} \cong {\sh O}_F$. Let $Y^c \subset X_{o'}$ be the union of all irreducible components of $X_{o'}$ other than $Y$. Note that ${\sh O}_X(Y)|_Y \cong {\sh O}_X(-Y^c)|_Y$ because ${\sh O}_X(Y) \otimes {\sh O}_X(Y^c) = {\sh O}_X(X_{o'})$ is the pullback of ${\sh O}_{\Delta'}(o')$, which is invertible on $\Delta'$. It follows that ${\sh N}_{Y/X}|_F = {\sh O}_Y(-Y \cap Y^c)|_F \cong {\sh O}_F$, by the third condition in Definition \ref{defn: drawer}. 
\end{proof}

We will carry out most the calculations in this section in the following setup. 

\begin{sit}\label{sit: flat family of primitive divisors}
Let $D \subset V$ be a divisor defining a flat family $\{ D_z \}_{z \in \Delta}$ of divisors over $\Delta$ such that each $D_z$ is algebraically equivalent to $\beta_z$.
\end{sit}

Recall that $D_z$ is reduced for $z \neq o$. 

\begin{lem}\label{lem: divisor class on F}
In Situation \ref{sit: flat family of primitive divisors}, let $X \to V_{\Delta'}$ be an expansion and $D'$ the closure of $(D \backslash D_o) \times_\Delta \Delta'$ inside $X$. Assume that $D'$ is a Cartier divisor on $X$. Then ${\sh O}_X(D')|_F = {\sh O}_F(y)$, for any F-curve $F \subset X_{o'}$.
\end{lem}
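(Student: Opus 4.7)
The plan is to decompose the pullback Cartier divisor
$$ \pi^* \tilde D = D' + E \quad \text{in } \mathrm{Pic}(X), $$
where $\pi: X \to V_{\Delta'}$ is the expansion morphism, $\tilde D = D \times_\Delta \Delta'$ is flat over $\Delta'$ with central fiber $D_o$ (by flatness of $D \to \Delta$), and $E = \sum_Y n_Y Y$ is an ``exceptional'' divisor supported on $X_{o'}$. It then suffices to prove the two equalities $\deg \mathcal O_X(\pi^*\tilde D)|_F = 1$ and $\deg \mathcal O_X(E)|_F = 0$.

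The first equality is essentially formal. By construction of a drawer (Definition \ref{defn: drawer}), the map $\pi|_F$ is an isomorphism from $F$ onto an F-curve $\bar F \subset V_o$ (a fiber of $V_o \to E$), so the projection formula combined with the flatness identification $\mathcal O_V(D)|_{V_o} \cong \mathcal O_{V_o}(D_o)$ gives
$$ \deg \mathcal O_X(\pi^*\tilde D)|_F = \deg \mathcal O_{V_o}(D_o)|_{\bar F} = [\beta] \cdot [\bar F] = 1, $$
the last equality using the pairing $([E\times\{\mathrm{pt}\}] + d[\{\mathrm{pt}\}\times F])\cdot[\{\mathrm{pt}\}\times F] = 1$ on $V_o = E \times F$.

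For the second equality, note that $n_Y = 0$ unless $\pi(Y) \subseteq D_o$: in particular $n_{Y_\star} = 0$, and any contributing $Y$ is a ruled component ($T$- or $S$-type) projecting onto some F-curve $\bar F_Y$ which is a component of $D_o$. I claim $Y \cdot F = 0$ for each such $Y$. If $\bar F_Y \neq \pi(F)$, then $Y$ and $F$ lie in disjoint subtrees of the drawer and $Y \cap F = \emptyset$. If $\bar F_Y = \pi(F)$ but $F \not\subset Y$, then the component containing $F$ meets $Y$ (if at all) along a gluing section which, by condition (iii) in Definition \ref{defn: drawer}, is either equal to or disjoint from $F$. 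Finally, if $F \subset Y$, then $F$ is a constant section (when $Y \cong T$) or the distinguished section $E_\infty$ (when $Y \cong S$), both corresponding to the unique trivial line sub-bundle of the normal bundle $\mathcal N$ of the blown-up curve in its ambient smooth 3-fold; under the description $Y = \mathbb P(\mathcal N)$ with $N_{Y/X} = \mathcal O_Y(-1)$, this forces $N_{Y/X}|_F \cong \mathcal O_F$, giving $Y \cdot F = 0$.

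The main obstacle is the case $F \subset Y$ in the analysis above: verifying that $N_{Y/X}|_F$ is trivial relies crucially on the drawer axiom that component gluings take place along sections with trivial normal bundle in the ambient ruled surface. Once this is in place, the fact that the multiplicities $n_Y$ can be larger than $1$ for iterated expansions becomes immaterial, since the relevant contributions $n_Y (Y\cdot F)$ all vanish term by term.
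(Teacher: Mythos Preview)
Your approach via the decomposition $\pi^*\tilde D = D' + E$ is genuinely different from the paper's and is a natural idea, but there are two issues.

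First, a logical slip: you say it suffices to prove \emph{degree} equalities, but $F$ is an elliptic curve and the lemma asserts a specific class in $\mathrm{Pic}^1(F)$ (the point $y$ is used downstream, cf.\ Remark~\ref{rmk: what lemma "divisor class on F" means}). This is easy to repair, since your individual arguments actually establish line bundle isomorphisms, not merely degrees: $\pi|_F$ being an isomorphism gives $\mathcal O_X(\pi^*\tilde D)|_F \cong \mathcal O_{V_o}(D_o)|_{\bar F}$ as line bundles, and in your case analysis for $E$ you either exhibit disjointness or argue that $N_{Y/X}|_F$ is trivial.

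Second, and more seriously, the case $F \subset Y$ relies on the identification ``$Y = \mathbb P(\mathcal N)$ with $N_{Y/X} = \mathcal O_Y(-1)$.'' This is only valid when $Y$ is the exceptional divisor of a single blow-up of a \emph{smooth} threefold along a smooth curve. Expansions arise from iterated blow-ups interleaved with finite base changes, and the paper explicitly does not require $X$ to be smooth (see the sentence after Definition~\ref{defn: expansion}); after a degree-$\mu$ base change the total space typically acquires singularities along the double curves, and the projective-bundle description is no longer available. A correct replacement is the trick from the lemma immediately preceding this one: $\mathcal O_X(Y)|_Y \cong \mathcal O_X(-Y^c)|_Y$ because $X_{o'}$ is a fibre, and then one restricts to $F$.

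The paper takes an entirely different and much shorter route: the set of F-curves with a fixed image $\xi(F)$ in $V_o$ is naturally parametrized by a rational tree $Q$ (one point from $Y_\star$ and a copy of $\mathbb P^1$ from each $T$-component over $\xi(F)$), and $[F'] \mapsto \mathcal O_X(D')|_{F'}$ defines a morphism $Q \to \mathrm{Pic}(F)$. Since $\mathrm{Pic}(F)$ contains no rational curves, this morphism is constant, and one reads off the value on the unique F-curve lying in $Y_\star$. This avoids any case analysis, any question about Cartier-ness of individual components, and any computation of $N_{Y/X}$.
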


\begin{proof}
The set of F-curves with image $\xi(F)$ in $V_o$ is naturally in bijection with the set of $\cc$-points of a rational tree $Q$. Consider the morphism $Q \to \mathrm{Pic}(F)$ given by $[F'] \mapsto {\sh O}_X(D')|_{F'}$ on $\cc$-points. Since $\mathrm{Pic}(F)$ doesn't contain any rational curves, we conclude that the map is constant and the lemma follows since it trivially holds in the special case $F \subset V_\star$.
\end{proof}

\begin{rmk}\label{rmk: what lemma "divisor class on F" means}
Lemma \ref{lem: divisor class on F} and the rest of the discussion so far give a clear picture of what the central fiber of $D'$ can look like. Let $Y$ be an irreducible component of $X_{o'}$ and let $D'|Y$ be the restriction of $D'$ to $Y$.
\begin{enumerate}
\item if $Y = Y_\star$, then $D'|Y$ is the sum of one fiber of $Y_\star \to F$ and several fibers of $Y_\star \to E$;
\item if $Y \approx T$, then $D'|Y$ consists of one rational fiber of $T$ and several F-curves;
\item if $Y \approx S$, then $D'|Y$ is the sum of a multiple of the distinguished section and an integral curve intersecting the distinguished section at the point which is mapped to $y$ \cite[\S2]{[Za19a]}.
\end{enumerate}
\end{rmk}

We now discuss the semistable reduction process: we want to make $D_o$ reduced (without allowing it to contain any special F-curve) by a sequence of blowups and base changes.

\begin{lem}\label{lem: bubbling up step}
With the setup in Lemma \ref{lem: divisor class on F}, assume that $\mathrm{Supp}(D'_{o'})$ doesn't contain any special F-curve. If $D'_{o'}$ is nonreduced, there exists a diagram
\begin{center}
\begin{tikzpicture}
\node (a0) at (-4,2) {$\widetilde{X}$};
\node (a1) at (-2,2) {$X$};
\node (a2) at (0,2) {$V$};
\node (b0) at (-4,1) {$\Delta''$};
\node (b1) at (-2,1) {$\Delta'$};
\node (b2) at (0,1) {$\Delta$};

\draw [->, thin] (a0)--(a1);
\draw [->, thin] (a1)--(a2);
\draw [->, thin] (b0)--(b1);
\draw [->, thin] (b1)--(b2);
\draw [->, thin] (a1)--(b1);
\draw [->, thin] (a2)--(b2);
\draw [->, thin] (a0)--(b0);
\end{tikzpicture}
\end{center}
such that the convex hull is an expansion, and, if $D''$ is the closure of $(D' \backslash D'_{o'}) \times_{\Delta'} \Delta''$ in $\smash{ \widetilde{X} }$, $D''$ is still Cartier, $\mathrm{Supp}(D''_{o''})$ also doesn't contain any special F-curve and $D''_{o''}$ is `less non-reduced' than $D'_{o'}$, e.g. in the sense that the sum of one less the multiplicities of its components is less than that for $D'_{o'}$. \end{lem}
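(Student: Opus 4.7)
The plan is to bubble up along a nonreduced F-curve by an iterative blowup and base-change procedure governed by Lemma \ref{lem: eventually lemma}. First I would pick a simple F-curve $F_0 \subset \mathrm{Supp}(D'_{o'})$ appearing with multiplicity $m \geq 2$; such $F_0$ must exist, because (in view of Remark \ref{rmk: what lemma "divisor class on F" means}) the only possible source of nonreducedness of $D'_{o'}$, given that no special F-curve lies in its support, comes from F-curves in the interior of a drawer component. Since $F_0$ is simple, it lies in the smooth locus of the 3-fold $X$, with normal bundle $\mathcal{N}_{F_0/X}$ either trivial or Atiyah.

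In a Zariski open $U \subset X$ containing the generic point of $F_0$, pick coordinates $(t,x,z)$ with $t$ the pullback of the uniformizer at $o' \in \Delta'$, $F_0 \cap U = \{t=x=0\}$, and $z$ a coordinate along $F_0$. Weierstrass preparation applied to the local equation of $D'$ along $F_0$ produces a representative $p(t,x,z) \in W_m(A)$ in the notation of \S\ref{ssec: algebraic bubbling up}, with $A$ a suitable coordinate ring along $F_0$; the condition $\alpha_j(0)=0$ encodes that $D'$ has multiplicity exactly $m$ along $F_0$ and that no other component of $D'_{o'}$ passes through its generic point. Apply Lemma \ref{lem: eventually lemma} to get an integer $\mu$ such that the sequence starting from $r(t,x,z) = p(t^\mu,x,z)$ terminates in $[\mathrm{win}]$.

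The algorithm is interpreted geometrically as follows: the initial substitution $t \mapsto s^\mu$ is the base change $\delta:(\Delta'',o'')\to(\Delta',o')$ of local degree $\mu$; each $\nu$ is the blowup of the current F-curve (the smooth codimension-two center $(t,x)=(0,0)$) followed by passing to the chart $x'=x/t$, whose effect on $D$ is precisely division by the $m$-fold exceptional; and each $\tau$ is the coordinate shift $x \mapsto x - \alpha t$ which selects the section of the newest exceptional $\mathbb{P}^1$-bundle along which the next blowup is centered. Reaching $[\mathrm{win}]$ means that the strict transform meets the latest exceptional divisor in branches no single one of which carries the full multiplicity $m$; in particular the nonreducedness measure $\sum_i (m_i-1)$ strictly drops from $D'_{o'}$ to $D''_{o''}$.

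Finally, I would verify that each intermediate modification preserves the expansion structure. Blowing up a smooth F-curve $F_0$ with normal bundle $\mathcal{N}_{F_0/X}$ in the smooth ambient produces a smooth 3-fold whose exceptional divisor $E = \mathbb{P}(\mathcal{N}_{F_0/X})$ is $\cong T$ in the trivial case and $\cong S$ in the Atiyah case, and meets the strict transform of the containing component $Y$ along the section $\mathbb{P}(\mathcal{N}_{F_0/Y}) \subset E$. Since $\mathcal{N}_{F_0/Y}$ is trivial ($F_0$ is a fiber in either $Y_\star \cong E \times F$ or a $T$-component), this section is either the distinguished section of $S$ (with trivial normal bundle) or a fiber of the $T \to \mathbb{P}^1$ ruling, exactly matching Definition \ref{defn: drawer}(iii). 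The Cartier property is preserved under strict transform through blowups of smooth centers, and the $[\mathrm{win}]$ condition combined with the hypothesis on $D'_{o'}$ ensures that no special F-curve enters $\mathrm{Supp}(D''_{o''})$, since the strict transform meets $E$ at sections distinct from the newly-created special section $E \cap \tilde Y$. The main obstacle is precisely to match each algebraic operation $\nu, \tau$ with its geometric realization while simultaneously tracking the drawer structure, the Cartier property, and the strict decrease in nonreducedness measure; the termination packaged in Lemma \ref{lem: eventually lemma} handles the conceptual core, and the rest is local but careful bookkeeping.
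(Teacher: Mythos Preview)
Your proposal is correct and follows the paper's approach: Weierstrass preparation puts the local equation of $D'$ into $W_m(A)$, Lemma~\ref{lem: eventually lemma} supplies $\mu$, and the operations $\nu$, $\tau$ are realized geometrically as iterated blowups of simple F-curves (with section-selection) after a degree-$\mu$ base change. Two small imprecisions worth flagging: the paper carries out Weierstrass preparation in \emph{formal} coordinates at a closed point $e\in F_0$ away from the other components (so $A=\cc[\![y]\!]$, where the preparation theorem actually applies) rather than on a Zariski open; and it is the fact that the sequence never hits $[\mathrm{lose}]$---guaranteed by Lemma~\ref{lem: eventually lemma}---that keeps each intermediate $F_i$ a \emph{simple} F-curve, whereas $[\mathrm{win}]$ governs only the final drop in multiplicity.
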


\begin{proof}
This follows from Lemma \ref{lem: eventually lemma} in \S\ref{ssec: algebraic bubbling up}. By Remark \ref{rmk: what lemma "divisor class on F" means}, the nonreduced components of $D'_{o'}$ can only be F-curves. Let $F \subset D'_{o'}$ be an F-curve appearing with multiplicity $m \geq 2$. Let $e \in F$ be a closed point which doesn't belong to the other component of $D'_{o'}$ intersecting $F$. Choose formal coordinates $\smash{\widehat{\sh O}_{X,e} \cong \cc [\![ x,y,t ]\!]}$ on $X$ at $e$ such that $t$ is the pullback of a uniformizer of $\smash{ \widehat{\sh O}_{\Delta',o'} }$, which implies that $X_{o'}$ is given locally by $t=0$, and inside $X_{o'}$, $y$ is the pullback of a uniformizer of $\smash{\widehat{\sh O}_{\xi(F),\xi(e)} }$, $F$ is given by $x=0$ and thus $D'_{o'}$ is given by $x^m = 0$. Let $\smash{ \Phi \in \widehat{\sh O}_{X,e} }$ cut out $D'$ formally locally at $e$. By the formal Weierstrass Preparation Theorem, there exists a unit $\smash{u \in \widehat{\sh O}_{X,e} }$ and a Weierstrass polynomial 
$$ p(t,x,y) = x^m + \sum_{j=1}^m \alpha_j(t,y) x^{m-j}, \quad \alpha_j(t,y) \in (t,y) \subset \cc [\![ y,t ]\!], $$ 
such that $\Phi = up$. However, plugging in $t=0$ and recalling that the formal local equation of $D'_{o'}$ inside $X_{o'}$ is $x^m = 0$, we conclude that in fact $\alpha_j(t,y) \in (t)$. In other words, in the notation of \S\ref{ssec: algebraic bubbling up}, we have $p \in W_m(\cc [\![ y ]\!])$. The variables $t,x$ have the same meaning as in \S\ref{ssec: algebraic bubbling up}. Since $u$ is a unit, $D'$ is also locally cut out by $p$. Note that $p$ is not a perfect $m$th power since $D' \backslash D'_{o'}$ is reduced.

Let $\mu$ be the positive integer provided by Lemma \ref{lem: eventually lemma} for $p \in W_m(\cc [\![ y ]\!])$ and let $(\Delta'',o'') \to (\Delta',o')$ be a base change of local degree $\mu$ at $o'$. Let $p_0,p_1, \ldots ,p_N \in W_m(\cc [\![ y ]\!])$ such that 
\begin{enumerate} 
\item [$\bullet$] $p_0(t,x,y) = p(t^\mu,x,y)$;
\item [$\bullet$] $t^{-m}p_n(t,tx,y) \in \cc [\![ y,t]\!] [x]$ for $n=0,1,\ldots, N$;
\item [$\bullet$] $p_{n+1}(t,x,y) = t^{-m}p_n(t,t(x + \psi_n(y)),y)$ for some $\psi_n \in \cc [\![ y ]\!]$; and 
\item [$\bullet$] $t^{-m}p_N(t,t(x + \psi(y)),y) \notin W_m(\cc [\![ y ]\!])$ for all $\psi \in \cc [\![ y ]\!]$;
\end{enumerate}
that is, the sequence of odd rank iterates considered in Lemma \ref{lem: eventually lemma}. Let $X_0 = X \times_{\Delta'} \Delta''$ and $D_0 = D' \times_{\Delta'} \Delta''$. Note that $D_0 \subset X_0$ is still Cartier. Choose formal coordinates $x_0,y_0,t_0$ at the set-theoretic preimage $e_0$ of $e$ in $X_0$ such that $x \mapsto x_0$, $y \mapsto y_0$ and $t \mapsto t_0^\mu$ under the pullback $\smash{ \widehat{\sh O}_{X,e} \to \widehat{\sh O}_{X_0,e_0} }$. The formal equation of $D_0$ at $e_0$ is $p_0(t_0,x_0,y_0) = 0$. Let $F_0 = (F \times_{\Delta'} \Delta'') \cap (X_0 \times_{\Delta''} \{o''\})$. We blow up $F_0$ in $X_0$ and obtain $X_1$. Let $D_1$ be the proper transform of $D_0$ and $e_1 \in D_1$ the point mapping to $e_0 \in D_0$. If the component of $D_1$ containing $e_1$ is not an F-curve of $X_1$, we stop here. If it is, then, in the formal coordinates $x_1,y_1,t_1$ on $X_1$ at $e_1$ such that $x_0 \mapsto t_1(x_1 + \psi_0(y_1))$, $y_0 \mapsto y_1$, $t_0 \mapsto t_1$ under the pullback $\smash{ \widehat{\sh O}_{e_0,X_0} \to \widehat{\sh O}_{e_1,X_1}}$ (obviously, the image of $x_0$ is a multiple of $t_1$), $D_1$ is described by the equation $p_1(t_1,x_1,y_1) = 0$. Let $F_1 \subset X_1$ be this F-curve. We continue inductively.
\begin{center}
\begin{tikzpicture}[scale = 0.9]
\node (a2) at (2,1) {$X_M$};
\node (a3) at (4,1) {$\cdots$};
\node (a4) at (6,1) {$X_1$};
\node (a5) at (8,1) {$X_0$};
\node (a6) at (10,1) {$X \times_{\Delta'} \Delta''$};
\node (a7) at (12,1) {$X$};
\node (a8) at (14,1) {$S$};
\node (b2) at (2,0) {$\Delta''$};
\node (b3) at (4,0) {$\cdots$};
\node (b4) at (6,0) {$\Delta''$};
\node (b5) at (8,0) {$\Delta''$};
\node (b6) at (10,0) {$\Delta''$};
\node (b7) at (12,0) {$\Delta'$};
\node (b8) at (14,0) {$\Delta$};

\draw [->, thin] (a2)--(a3);
\draw [->, thin] (a3)--(a4);
\draw [->, thin] (a4)--(a5);
\draw [double equal sign distance, thin] (a5)--(a6);
\draw [->, thin] (a6)--(a7);
\draw [->, thin] (a7)--(a8);

\draw [double equal sign distance, thin] (b2)--(b3);
\draw [double equal sign distance, thin] (b3)--(b4);
\draw [double equal sign distance, thin] (b4)--(b5);
\draw [double equal sign distance, thin] (b5)--(b6);
\draw [->, thin] (b6)--(b7);
\draw [->, thin] (b7)--(b8);

\draw [->, thin] (a2)--(b2);
\draw [->, thin] (a4)--(b4);
\draw [->, thin] (a5)--(b5);
\draw [->, thin] (a6)--(b6);
\draw [->, thin] (a7)--(b7);
\draw [->, thin] (a8)--(b8);

\end{tikzpicture}
\end{center}
The fact that we never hit the [lose] branch of $\nu$ in Lemma \ref{lem: eventually lemma} ensures that all $F_i$ are simple F-curves. Lemma \ref{lem: eventually lemma} ensures that the process will end after at most $N$ steps. The last $D_M \subset X_M$ doesn't contain any special F-curves and its fiber over $o''$ is `less non-reduced' than $D'_{o'}$.
\end{proof}

Applying Lemma \ref{lem: bubbling up step} repeatedly, we conclude the following.
  
\begin{cor}\label{cor: expansions conclusion}
In Situation \ref{sit: flat family of primitive divisors}, there exists an expansion $X \to V_{\Delta'}$ such that if $D'$ is the closure of $(D \backslash D_o) \times_\Delta \Delta'$ in $X$, then $D'_{o'}$ is reduced and doesn't contain any special F-curves.
\end{cor}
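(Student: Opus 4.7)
My plan is a straightforward induction using Lemma \ref{lem: bubbling up step}. For an expansion $X \to V_{\Delta'}$ with $D'$ Cartier on $X$, define the non-negative integer invariant
\[
\mu(D'_{o'}) := \sum_Y \bigl(\mathrm{mult}_Y(D'_{o'}) - 1\bigr),
\]
summed over the irreducible components $Y$ of $\mathrm{Supp}(D'_{o'})$; it vanishes precisely when $D'_{o'}$ is reduced.

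The base case is the trivial expansion $\Delta' = \Delta$, $X = V$, $D' = D$. Here $X_{o'} = V_o = Y_\star$ has a single irreducible component, so $\mathrm{Supp}(D'_{o'}) \subset V_o$ trivially contains no special F-curves (there are none in $X_{o'}$), and the hypotheses of Lemma \ref{lem: bubbling up step} are satisfied. For the inductive step, given an expansion $X \to V_{\Delta'}$ with $D'$ Cartier, $\mathrm{Supp}(D'_{o'})$ free of special F-curves, and $\mu(D'_{o'}) > 0$, Lemma \ref{lem: bubbling up step} produces a new expansion $\widetilde{X} \to V_{\Delta''}$ and a Cartier divisor $D''$ whose support still contains no special F-curves and which satisfies $\mu(D''_{o''}) < \mu(D'_{o'})$. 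Since $\mu$ takes values in $\mathbb{Z}_{\geq 0}$, the process must terminate after finitely many iterations, yielding an expansion whose central divisor is reduced and contains no special F-curves.

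The only point I would want to verify carefully -- and the closest thing to an obstacle here -- is that the composition of the two base changes $\Delta'' \to \Delta' \to \Delta$ together with the induced map $\widetilde{X} \to V$ genuinely reassembles into a single expansion in the sense of Definition \ref{defn: expansion}, so that the inductive hypothesis can be reapplied to $\widetilde{X} \to V_{\Delta''}$. This is precisely the content of the phrase ``the convex hull is an expansion'' in the statement of Lemma \ref{lem: bubbling up step}, and unfolding the conditions in Definition \ref{defn: expansion} (in particular, checking that $X_{o''} \to V_o$ is a drawer and that the isomorphism condition (iii) there is preserved under the blowups performed in the proof of the lemma) is routine. Once this is verified, the induction runs without further complication.
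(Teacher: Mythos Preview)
Your argument is correct and is exactly the approach the paper takes: the paper's own proof is the single sentence ``Applying Lemma \ref{lem: bubbling up step} repeatedly, we conclude the following,'' and your write-up just makes the implicit induction (with the invariant $\mu$ already named in the statement of Lemma \ref{lem: bubbling up step}) explicit. The point you flag about the composite $\widetilde{X} \to V_{\Delta''}$ being an expansion is not something you need to reverify---it is part of the \emph{conclusion} of Lemma \ref{lem: bubbling up step} (``the convex hull is an expansion''), so you may simply invoke it.
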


\begin{rmk}\label{rmk: addition to conclusion of expansions}
By Remark \ref{rmk: what lemma "divisor class on F" means}, in the situation of Corollary \ref{cor: expansions conclusion}, any irreducible component of $D'_{o'}$ must be one of the following:
\begin{enumerate}
\item [(a)] a simple F-curve;
\item [(b)] a fiber $E \times \{pt\} \subset Y_\star \cong E \times F$;
\item [(c)] a rational fiber of some component $Y \approx T$ of $X_{o'}$; or
\item [(d)] a curve in a component $Y \approx S$ of $X_{o'}$ which intersects the distinguished section at only one point scheme-theoretically.
\end{enumerate}
Moreover, each irreducible component of $X_{o'}$ contains precisely one component of $D'_{o'}$ of one of the types (b)--(d) above and the components of type (c) are contracted by $X_{o'} \to V_o = E \times F$.
\end{rmk}

\begin{tiny}
\begin{center}
\begin{tikzpicture}

\def\kk{0.2}

\def\xx{2}
\def\yy{1}

\def\aa{0}
\def\bb{0}

\draw[thick] (0,2) to (\xx,\yy+2);

\draw[thick] (0,2.3) to (\xx,\yy+2.3);

\draw [thick, draw=black, fill=gray, opacity=0.1]
(\aa,\bb) to (\aa,\bb+5) to (\aa+\xx,\bb+5+\yy) to (\aa+\xx,\bb+\yy) to cycle;

\draw [black, thick] (\aa+\kk*\xx,\bb+\kk*\yy) to (\aa+\kk*\xx,\bb+\kk*\yy + 5);

\draw [thick, draw=black, fill=gray, opacity=0.1]
(\aa,\bb+1.2) to (\aa+\xx,\bb+\yy + 1.2) to (\aa+\xx + 3,\bb+\yy + 1.2) to (\aa+ 3,\bb+ 1.2) to cycle;

\draw [black, thick] (\aa+2.3,\bb+1.2 + 1.2) to (\aa+2.3+\xx,\bb+\yy + 1.2 + 1.2);

\draw [black, thick] (\aa+2.5,\bb+1.2 + 1.2) to (\aa+2.5+\xx,\bb+\yy + 1.2 + 1.2);

\draw [black, thick] (\aa+2.7,\bb+1.2 + 1.2) to (\aa+2.7+\xx,\bb+\yy + 1.2 + 1.2);

\draw [black, thick] (\aa+2.3,\bb+1.2) to (\aa+2.3+\xx,\bb+\yy + 1.2);

\draw [black, thick] (\aa+\kk*\xx,\bb+\kk*\yy + 1.2) to (\aa+\kk*\xx + 3,\bb+\kk*\yy + 1.2);

\draw [thick, draw=black, fill=gray, opacity=0.1]
(\aa+1.8,\bb+1.2) to (\aa+1.8+\xx,\bb+\yy + 1.2) to (\aa+1.8+\xx + 0.0, \bb+\yy + 1.2 + 1.5) to (\aa+1.8 + 0.0, \bb + 1.2 + 1.5) to cycle;

\draw [black, thick] (\aa+1.8+\kk*\xx,\bb+\kk*\yy + 1.2) to (\aa+1.8+\kk*\xx + 0.0,\bb+\kk*\yy + 1.2 + 1.5);

\draw [thick, draw=black, fill=gray, opacity=0.1]
(\aa + 1.8 + 0.0  ,\bb+1.2 + 1.2) to (\aa+1.8+\xx + 0.0,\bb+\yy + 1.2 + 1.2) to (\aa+1.8+\xx + 0.0 + 3,\bb+\yy + 1.2 + 1.2) to (\aa+1.8+ 0.0 + 3,\bb + 1.2 + 1.2) to cycle ;

\draw [black,  thick] (2.2, 2.6) to (2.2 + 3, 2.6);

\draw [thick, draw=black, fill=gray, opacity=0.1]
(4,2.4) to [bend right] (4+0.15*\xx,2.4 + 0.15 * \yy + 1.7) to (4 + 0.85* \xx,2.4 + 0.85 * \yy + 1.7) to [bend right] (4 + \xx,2.4 + \yy) to cycle;

\draw [black, thick] (4.4, 2.6) to[out=60,in=270] (5.4, 4.1);
\draw [black, thick] (5.4, 4.1) to[out=90, in=0] (5,4.5);
\draw [black, thick] (5,4.5) to[out=180, in=180] (5.6,3.3);

\draw [thick, draw=black, fill=gray, opacity=0.1]
(\aa+2.6,\bb+1.2) to (\aa+2.6+\xx,\bb+\yy + 1.2) to[bend right] (\aa+2.6+0.85*\xx , \bb+0.85*\yy + 1.2 - 1.5) to (\aa+ 0.15*\xx+2.6 , \bb + 0.15*\yy+ 1.2 - 1.5) to[bend right] cycle;

\draw [black, thick] (3,1.4) to[out = 270, in = 180] (3.5,0.4);
\draw [black, thick] (3.5,0.4) to [out = 0, in = 300] (4.1, 1.2);
\draw [black, thick] (4.1, 1.2) to [out = 120, in = 90] (3.1,0.1);

\draw [thick, draw=black, fill=gray, opacity=0.1]
(\aa,\bb+4.2) to (\aa+\xx,\bb+\yy + 4.2) to (\aa+\xx + 3,\bb+\yy + 4.2) to (\aa+ 3,\bb+ 4.2) to cycle;

\draw [black, thick] (\aa+\kk*\xx,\bb+\kk*\yy + 4.2) to (\aa+\kk*\xx + 3,\bb+\kk*\yy + 4.2);

\draw [black, thick] (\aa+1,\bb+4.2) to (\aa+1+\xx,\bb+\yy + 4.2);

\draw [black, thick] (\aa+2.3,\bb+4.2) to (\aa+2.3+\xx,\bb+\yy + 4.2);

\node at (3.8,0) {$S$};
\node at (5.9,4.1) {$S$};
\node at (0.8,5.7) {$Y_\star$};
\node at (3.4,5.4) {$T$};
\end{tikzpicture}

\textbf{Figure A.} An illustration of Corollary \ref{cor: expansions conclusion} and Remark \ref{rmk: addition to conclusion of expansions}.
\end{center}
\end{tiny}

\begin{proof}[Proof of Theorem \ref{thm: necessary condition to deform}.] 
Making the base change in Definition \ref{defn: deformable}, we may assume without loss of generality that there exists a stable map $[f] \in \overline{\modu M}_g(V/\Delta,\beta)(\Delta)$ such that $f_o \approx h$. Let $D \subset V$ be the image of $f$. By Corollary \ref{cor: expansions conclusion}, there exists a base change $(\Delta',o') \to (\Delta,o)$ and an expansion $X \to V_{\Delta'}$ such that, if $D'$ is the closure of $(D \backslash D_o) \times_\Delta \Delta'$ in $X$, then $D'_{o'}$ is reduced and doesn't contain any special F-curves. The restriction of $(\Delta' \to \Delta)^* f$ to $\Delta' \backslash \{o'\}$ is a stable map into $V_{\Delta' \backslash \{o'\}} = X_{\Delta' \backslash \{o'\}}$, so, by nodal reduction/properness of the moduli spaces, there exists another base change $(\Delta'',o'') \to (\Delta',o')$ and a map $f''$ from some prestable curve over $\Delta''$ into $X_{\Delta''}$ whose restriction to $\Delta''\backslash \{o''\}$ is precisely $(\Delta''\backslash \{o''\} \to \Delta)^*f$. By the separateness of the moduli spaces of stable maps, the prestable map $(X_{\Delta''} \to V_{\Delta''}) \circ f''$ `stabilizes' to $(\Delta'' \to \Delta)^*f$ (to stabilize, contract the destabilizing components and use \cite[Lemma 2.2]{[BM96]}). In particular, $(X_{o'} \to V_o) \circ f''_{o''}$ stabilizes to $h$. 

All irreducible components of the source of $f''_{o''}$ are of one of five possible types:
\begin{enumerate}
\item [(a)--(d)] corresponding to the types (a)--(d) in Remark \ref{rmk: addition to conclusion of expansions}; and possibly
\item [(e)] contracted components.
\end{enumerate}
The images in $X_{o'}$ of the components of types (a)--(d) are all different, and all type (e) components are rational by assumption. 

Let $C$ be an irreducible component of the source of $h$, and $C''$ the corresponding irreducible component of the source of $f''_{o''}$. Assume that this is a positive genus component, and not the unique type (b) component. It is not of type (c) because it's irrational, not of type $(e)$ because $h$ doesn't contract $C$ and also by definition not of type (b), so it is of type (a) or (d). The latter claim, i.e. the fact that the restriction of the map is quasi-traceless, now follows automatically from Lemma \ref{lem: connection to Atiyah surface} and the trivial fact that isomorphisms are always quasi-traceless. Finally, we show that $C$ corresponds to a leaf of the dual graph of $h$. Obviously, the source of $f''_{o''}$ is of compact type, because the source of $h$ is of compact type. The desired conclusion trivially follows from the next claim: the configuration of descendants of $C''$ in the dual graph is a forest of rational components, which will be contracted to a finite set of points by $(X_{o'} \to V_o) \circ f''_{o''}$. First, there can't be any descendants of positive genus (if $C''$ is of type (d), it is useful to keep in mind Remarks \ref{rmk: addition to conclusion of expansions}, or \ref{rmk: what lemma "divisor class on F" means}, or \ref{rmk: no complete curves on open Atiyah surface} to see this). Second, the rational descendants are trivially contracted by $(X_{o'} \to V_o) \circ f''_{o''}$ since all maps from ${\mathbb P}^1$ into a positive genus curve are constant. Finally, the configuration of descendants is a forest regardless because the source of $f''_{o''}$ is of compact type, and the proof of Theorem \ref{thm: necessary condition to deform} is complete. 
\end{proof}

We are inclined to believe that Theorem \ref{thm: necessary condition to deform} is a `shadow' of a somewhat different phenomenon. If true, this would give a better picture of how the deformable maps vary in moduli.

\begin{Q}\label{Q: main question for future}
In the current situation, does there exist an alternate degeneration/compactification of the moduli space of stable maps (aesthetically) in the style of \cite{[Li01], [KKO14]}? Does a similar construction exist for K3 surfaces?
\end{Q}

\subsection{Proof of Claim \ref{claim: key claim}}\label{ssec: proof of main claim} Let $\varpi = (\Lambda_1 + \cdots + \Lambda_{g-1} = \Lambda[\theta]) \in \Pi$, and $\varpi \succ \tilde{\varpi}$ obtained by replacing $\Lambda_i$ and $\Lambda_j$ with $\Lambda_i+\Lambda_j$, with notation as in \S\ref{ss: reduction ss} and \S\ref{ssec: partitions combinatorics}. With Remark \ref{rmk: factorizations on product surfaces} in mind, we introduce the following class of stable maps. 

\begin{defn}\label{defn: qs and qsqt}
A stable map $[f] \in {\modu T}_o(\cc)$ is \emph{quasi-simple of type $\varpi \succ \tilde{\varpi}$} if  
\begin{enumerate}
\item the source of $f$ is $\smash{B \cup T_{ij} \cup \bigcup_{k \in [g-1] \backslash\{i,j\}} T_k }$ such that $T_{ij}$ is smooth of genus $2$, while $B$ and $T_k$ are smooth of genus $1$, and the dual graph is a star centered at $B$; 
\item $f|B$ is an isomorphism onto a fiber of $V_o \to F$, $f|T_k$ is an isogeny onto a fiber of $V_o \to E$ with associated lattice $\Lambda_k$, and $f|T_{ij}$ is a (ramified) cover of a fiber of $V_o \to E$, of degree $d_i+d_j$ and with associated lattice $\Lambda_i+\Lambda_j$, that is, the image of $\hh_1(f|T_{ij})$ is $\Lambda_i+\Lambda_j \subseteq \hh_1(F,\zz)$. 
\end{enumerate}
We say that $f$ is \emph{quasi-simple and quasi-traceless (qsqt) of type $\varpi \succ \tilde{\varpi}$} if $f|T_{ij}$ with the marking $T_{ij} \cap B \in T_{ij}$ is quasi-traceless onto $F$ in the sense of Definition \ref{defn: quasi-traceless}. 
\end{defn}

Again, the property of being quasi-simple generalizes easily first to algebraically closed extensions of $\cc$, and then to families by imposing the condition on geometric fibers. We leave it to the reader to check that the category ${\modu T}[\varpi \succ \tilde{\varpi}]$ of quasi-simple maps of type $\varpi \succ \tilde{\varpi}$ is naturally an open substack of ${\modu T}_o$, and that there is a natural $1$-morphism 
\begin{equation}\label{eqn: split off the genus 2} {\modu T}[\varpi \succ \tilde{\varpi}] \longrightarrow {\modu M}^\mathrm{sm}_{2,1}(F,d_i+d_j;\Lambda_i+\Lambda_j) \end{equation}
which on $\cc$-points associates $(T_{ij},f|T_{ij},T_{ij} \cap B)$ to $[f]$ with notation as in Definition \ref{defn: qs and qsqt}, where ${\modu M}^\mathrm{sm}_{2,1}(F,d_i+d_j;\Lambda_i+\Lambda_j) \subset \overline{\modu M}_{2,1}(F,d_i+d_j;\Lambda_i+\Lambda_j)$ is the open substack of stable maps with smooth sources. The arguments are similar to those for ${\modu V}[\varpi]$, which were sketched in \S\ref{ss: reduction ss}.

We define the moduli stack ${\modu Q}[\varpi \succ \tilde{\varpi}]$ of qsqt type $\varpi \succ \tilde{\varpi}$ stable maps by constructing the cartesian diagram
\begin{center}
\begin{tikzpicture}
\matrix [column sep  = 20mm, row sep = 5mm] {
	\node (a1) {${\modu Q}[\varpi \succ \tilde{\varpi}]$}; &
	\node (a2) {${\modu T}[\varpi \succ \tilde{\varpi}]$}; \\
	\node (b1) {${\modu Q}^\mathrm{sm}_{2,1}(F,d_i+d_j;\Lambda_i+\Lambda_j)$}; &
	\node (b2) {${\modu M}^\mathrm{sm}_{2,1}(F,d_i+d_j;\Lambda_i+\Lambda_j)$,}; \\
};
\draw[right hook->, thin] (a1) -- (a2);
\draw[right hook->, thin] (b1) -- (b2);
\draw[->,thin] (a1) -- (b1);
\draw[->,thin] (a2) -- (b2);
\node at (3.4,0) {\eqref{eqn: split off the genus 2}};
\node at (0,-0.3) {\eqref{eqn: Q closed immersion}, \eqref{eqn: Q decomp}};
\end{tikzpicture}
\end{center}
where ${\modu Q}^\mathrm{sm}_{2,1}(F,d_i+d_j;\Lambda_i+\Lambda_j) \subset {\modu M}^\mathrm{sm}_{2,1}(F,d_i+d_j;\Lambda_i+\Lambda_j)$ is the moduli space of quasi-traceless covers, analogously to \eqref{eqn: Q closed immersion}. Note that \eqref{eqn: split off the genus 2} has irreducible geometric fibers of dimension $g-2$, hence so does ${\modu Q}[\varpi \succ \tilde{\varpi}] \to {\modu Q}^\mathrm{sm}_{2,1}(F,d_i+d_j;\Lambda_i+\Lambda_j)$. By Theorem \ref{thm: quasi-traceless final theorem}, ${\modu Q}[\varpi \succ \tilde{\varpi}]$ is irreducible of dimension $g$.

\begin{rmk}\label{rmk: check qsqt at complex points}
If $N$ is reduced and of finite type over $\cc$, and $[f] \in {\modu T}[\varpi \succ \tilde{\varpi}](N)$ is a stable map over $N$, then, if we wish to show that $[f] \in {\modu Q}[\varpi \succ \tilde{\varpi}](N)$, it suffices to check that $[f_p] \in {\modu Q}[\varpi \succ \tilde{\varpi}](\cc)$ for all $p \in N(\cc)$, cf. Remark \ref{rmk: testing quasi-traceless on complex points}. 
\end{rmk}

We introduce one last type of stable map, again, keeping Remark \ref{rmk: factorizations on product surfaces} in mind. The essential steps in the proof of Claim \ref{claim: key claim} will operate locally near such a stable map.  

\begin{defn}\label{defn: transitional map}
A stable map $[f] \in {\modu T}_o(\cc)$ is \emph{transitional of type $\varpi \succ \tilde{\varpi}$} if  
\begin{enumerate}
\item the source of $f$ is $B \cup G \cup T_1 \cup \cdots \cup T_{g-1}$, such that $G$ is smooth of genus $0$, all other components are smooth of genus $1$, and the dual graph is
\begin{center}
\begin{tikzpicture}[scale = 1.3]
\fill (0,0) circle (0.5mm);
\fill (1,0) circle (0.5mm);
\fill (1.6,0.6) circle (0.5mm);
\fill (1.6,-0.6) circle (0.5mm);
\fill (-0.6,0.6) circle (0.5mm);
\fill (-0.8,0.3) circle (0.5mm);
\fill (-0.72,0.47) circle (0.5mm);
\fill (-0.6,-0.6) circle (0.5mm);
\draw (0,0) -- (1,0);
\draw (1,0) -- (1.6,0.6);
\draw (1,0) -- (1.6,-0.6);
\draw (0,0) -- (-0.6,0.6);
\draw (0,0) -- (-0.8,0.3);
\draw (0,0) -- (-0.72,0.47);
\draw (0,0) -- (-0.6,-0.6);

\node at (-1.1,0.3) {$T_k$}; 
\node at (1.9,0.6) {$T_i$}; 
\node at (1.9,-0.6) {$T_j$};
\node at (-0.8,-0.2) {$\cdots$};
\node at (0,0.3) {$B$};
\node at (1,0.3) {$G$};
\end{tikzpicture}
\end{center}
\item $G$ is a contracted (ghost) component, $f|B$ is an isomorphism onto a fiber of $V_0 \to F$, and $f|T_k$ is an isogeny onto a fiber of $V_o \to E$ with associated lattice $\Lambda_k$, for all $k \leq g-1$.
\end{enumerate}
\end{defn}

Fix $y \in {\modu T}_o(\cc)$ corresponding to a transitional $\varpi \succ \tilde{\varpi}$-map. 

\begin{tiny}
\begin{center}
\begin{tikzpicture}[scale = 1.1]

\def\aa{0}

\draw [thick, draw=black, fill=gray, opacity=0.1]
	(-1.4+\aa,-0.7) to (-1.4+\aa,0.7) to (1.4+\aa,0.7) to (1.4+\aa,-0.7) to cycle;
	
\draw (-1.4+\aa,-0.3) to (1.4+\aa,-0.3);

\draw (-1+\aa,-0.7) to (-1+\aa,0.7);
\draw (0.7+\aa,-0.7) to (0.7+\aa,0.7);
\draw [thick, very thick] (0.2+\aa,-0.7) to (0.2+\aa,0.7);

\def\hh{2.5}
\def\d{1}
\def\dd{-0.3}

\draw (\aa-\d,-1+\hh) to [bend left] (\aa-\d,0+\hh) to [bend right] (\aa-\d,1+\hh);

\draw (\aa - \d + \dd, -0.7+\hh) to [in=170, out=10] (\aa-\d + \dd + 2.5, -0.7+\hh);

\draw [thick] (\aa - \d + \dd, -0.1+\hh) to [in=170, out=10] (\aa-\d + \dd + 2.5, -0.1+\hh);

\draw (\aa - \d + \dd, 0.7+\hh) to [in=170, out=10] (\aa-\d + \dd + 2.5, 0.7+\hh);

\draw [thick] (\aa + 0.6, -0.4+\hh) to [in=-100, out = 100] (\aa + 0.6, 0.6+\hh);
\draw [thick] (\aa -0.2, -0.4+\hh) to [in=-100, out = 100] (\aa -0.2, 0.6+\hh);

\draw [->] (\aa, 1.7) to (\aa, 0.85);

\def\aa{-4}

\draw [thick, draw=black, fill=gray, opacity=0.1]
	(-1.4+\aa,-0.7) to (-1.4+\aa,0.7) to (1.4+\aa,0.7) to (1.4+\aa,-0.7) to cycle;
	
\draw (-1.4+\aa,-0.3) to (1.4+\aa,-0.3);

\draw (-1+\aa,-0.7) to (-1+\aa,0.7);
\draw (0.7+\aa,-0.7) to (0.7+\aa,0.7);
\draw [thick, very thick] (0.2+\aa,-0.7) to (0.2+\aa,0.7);

\def\hh{2.5}
\def\d{1}
\def\dd{-0.3}

\draw (\aa-\d,-1+\hh) to [bend left] (\aa-\d,0+\hh) to [bend right] (\aa-\d,1+\hh);

\draw (\aa - \d + \dd, -0.7+\hh) to [in=170, out=10] (\aa-\d + \dd + 2.5, -0.7+\hh);
red
\draw [thick] (\aa - \d + \dd, -0.1+\hh) to [in=170, out=10] (\aa-\d + \dd + 2.5, -0.1+\hh);

\draw (\aa - \d + \dd, 0.7+\hh) to [in=170, out=10] (\aa-\d + \dd + 2.5, 0.7+\hh);

\draw [->] (\aa, 1.7) to (\aa, 0.85);

\def\aa{4}

\draw [thick, draw=black, fill=gray, opacity=0.1]
	(-1.4+\aa,-0.7) to (-1.4+\aa,0.7) to (1.4+\aa,0.7) to (1.4+\aa,-0.7) to cycle;
	
\draw (-1.4+\aa,-0.3) to (1.4+\aa,-0.3);

\draw (-1+\aa,-0.7) to (-1+\aa,0.7);
\draw (0.7+\aa,-0.7) to (0.7+\aa,0.7);
\draw [thick, very thick] (0.3+\aa,-0.7) to (0.3+\aa,0.7);
\draw [thick, very thick] (0.1+\aa,-0.7) to (0.1+\aa,0.7);

\def\hh{2.5}
\def\d{1}
\def\dd{-0.3}

\draw (\aa-\d,-1+\hh) to [bend left] (\aa-\d,0+\hh) to [bend right] (\aa-\d,1+\hh);

\draw (\aa - \d + \dd, -0.7+\hh) to [in=170, out=10] (\aa-\d + \dd + 2.5, -0.7+\hh);

\draw [thick] (\aa - \d + \dd, -0+\hh) to [in=170, out=10] (\aa-\d + \dd + 2.5, -0+\hh);

\draw [thick] (\aa - \d + \dd, -0.2+\hh) to [in=170, out=10] (\aa-\d + \dd + 2.5, -0.2+\hh);

\draw (\aa - \d + \dd, 0.7+\hh) to [in=170, out=10] (\aa-\d + \dd + 2.5, 0.7+\hh);

\draw [->] (\aa, 1.7) to (\aa, 0.85);

\draw[->, decorate, decoration={zigzag,amplitude=0.9pt,segment length=1.2mm}] (-2.3,1.2) to (-1.7,1.2);
\draw[->, decorate, decoration={zigzag,amplitude=0.9pt,segment length=1.2mm}] (2.3,1.2) to (1.7,1.2);

\node at (4,3.7) {$\text{simple}$};
\node at (-4,3.7) {$\text{qsqt}$};
\node at (0,3.7) {$\text{transitional}$};
\node at (-3.3,2.7) {$\text{quasi-traceless}$};
\node at (4.7,2.8) {$\text{isogenies}$};
\node at (1.1,2.6) {$\text{ghost}$};

\end{tikzpicture}

\bigskip \textbf{Figure B.} An illustration of Lemma \ref{lem: figure B lemma}.
\end{center}
\end{tiny}

\begin{lem}\label{lem: figure B lemma}
The point $y$ lies in the closures of both $|{\modu Q}[\varpi \succ \tilde{\varpi}]|$ and $|{\modu V}[\varpi]|$ relative to $|{\modu T}_o|$ (and thus also relative to $|{\modu T}|$). 
\end{lem}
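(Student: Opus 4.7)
I would exhibit $y$ as a stable-map limit in two independent ways: first by colliding two attachment points on the backbone of a simple map of type $\varpi$, and second by smoothing the chain $T_i\cup G\cup T_j$ inside $y$ into a quasi-traceless genus-$2$ cover via Theorem \ref{thm: quasi-traceless final theorem} and then gluing back to $B$ and the other $T_k$. In both cases the central fiber is computed by properness of $\overline{\modu M}_g(V_o,\beta_o)$.

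\textit{Closure in ${\modu V}[\varpi]$.} Identify $B\cong E$ via $y|B$, let $x_*\in E$ be the image of the node $B\cap G$, and let $x_k\in E$ for $k\neq i,j$ be the attachment points determined by $y$. Over $\mathrm{Spec}\,\cc[\![s]\!]\setminus\{0\}$ choose two distinct sections $x_i^{(s)},x_j^{(s)}$ of $E\times \mathrm{Spec}\,\cc[\![s]\!]$ both specializing to $x_*$ at $s=0$, and keep the remaining $x_k$ constant and the fiber of $V_o\to F$ constant (equal to $y(B)$). The clutching morphism \eqref{eqn: clutching for simple} yields a $1$-morphism $\mathrm{Spec}\,\cc[\![s]\!]\setminus\{0\}\to{\modu V}[\varpi]$, which extends uniquely to $\mathrm{Spec}\,\cc[\![s]\!]\to\overline{\modu M}_g(V_o,\beta_o)$ by properness. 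The limit at $s=0$ is forced to insert a contracted rational bubble at $x_*$ absorbing both colliding markings, with $T_i,T_j$ now attached to this bubble; this is precisely $y$.

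\textit{Closure in ${\modu Q}[\varpi\succ\tilde\varpi]$.} Let $q\in F$ be the $F$-coordinate of $y(B)$ and $e_0\in E$ the $E$-coordinate of $y(G)$. The restriction $y|_{T_i\cup G\cup T_j}$ composed with $V_o\to F$, marked at $B\cap G\in G$, is exactly the degenerate quasi-traceless cover of $F$ associated to the data $(q,\Lambda_i,\Lambda_j)$ from \S\ref{ssec: moduli of quasi-traceless covers for genus 2}; call the corresponding point $y_0\in\widetilde{\modu Q}(\Lambda_i+\Lambda_j)(\cc)$. Lemma \ref{lem: quasi-traceless specific local calculation} and Proposition \ref{prop: quasi-traceless local calculation} together imply that $\mathrm{ev}|\widetilde{\modu Q}$ is smooth of relative local dimension $1$ at $y_0$, so $\mathrm{ev}^{-1}(q)\cap\widetilde{\modu Q}(\Lambda_i+\Lambda_j)$ is smooth of dimension $1$ at $y_0$; since $\widetilde{\modu Q}(\Lambda_i+\Lambda_j)$ is irreducible (Theorem \ref{thm: quasi-traceless final theorem}) and ${\modu Q}(\Lambda_i+\Lambda_j)$ is open and dense in it, I would find a $1$-morphism $\mathrm{Spec}\,\cc[\![s]\!]\to\mathrm{ev}^{-1}(q)\cap\widetilde{\modu Q}(\Lambda_i+\Lambda_j)$ with central fiber $y_0$ and generic fiber in ${\modu Q}(\Lambda_i+\Lambda_j)$. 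This produces a family $(T_{ij}^{(s)},\phi^{(s)}:T_{ij}^{(s)}\to F,r^{(s)})$ of quasi-traceless covers with $\phi^{(s)}(r^{(s)})=q$ for all $s$, smooth sources for $s\neq0$, and central fiber the degenerate cover.

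\textit{Gluing and conclusion.} A clutching construction parallel to \eqref{eqn: clutching for simple} (and formalizing \eqref{eqn: split off the genus 2}) takes as input the family above together with the constant data of $B\cong E$, the $x_k$, and the isogenies $T_k\to F$ corresponding to $\Lambda_k$ for $k\neq i,j$, and outputs a $1$-morphism $\mathrm{Spec}\,\cc[\![s]\!]\to{\modu T}_o$: the source is obtained by identifying $r^{(s)}\in T_{ij}^{(s)}$ with $e_0\in B$ and each $T_k$ with $B$ at $x_k$; the map sends $B$ isomorphically to $E\times\{q\}$, $T_{ij}^{(s)}$ to $\{e_0\}\times F$ via $\phi^{(s)}$, and $T_k$ by the given isogeny. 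The central fiber is $y$, the generic fiber lies in ${\modu T}[\varpi\succ\tilde\varpi]$, and by Remark \ref{rmk: check qsqt at complex points} applied fiberwise the generic fiber in fact lies in ${\modu Q}[\varpi\succ\tilde\varpi]$. The main technical obstacle is formalizing this last clutching morphism in families, but this is routine and entirely parallel to \eqref{eqn: clutching for simple}; the essential geometric input is Theorem \ref{thm: quasi-traceless final theorem}.
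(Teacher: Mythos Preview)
Your argument is correct and matches the paper's approach. For the ${\modu V}[\varpi]$ part, your collision-of-attachment-points degeneration is the stable-map description of exactly the one-parameter family the paper builds by blowing up $\{(t)\}\times\{\mathrm{pt}\}\times F$ in $\spec\cc[\![t]\!]\times E\times F$; for the ${\modu Q}[\varpi\succ\tilde\varpi]$ part, the paper simply invokes Theorem~\ref{thm: quasi-traceless final theorem} and leaves the clutching implicit, whereas you have written it out.

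One minor simplification: fixing the evaluation to $q$ is not needed and creates an extra step you do not quite justify, namely that the generic point of the smooth $1$-dimensional germ $\mathrm{ev}^{-1}(q)\cap\widetilde{\modu Q}$ through $y_0$ lands in ${\modu Q}$ (i.e.\ has smooth source). It is cleaner to let $q^{(s)}=\phi^{(s)}(r^{(s)})$ vary and take the backbone $B^{(s)}$ to map to $E\times\{q^{(s)}\}$; then Theorem~\ref{thm: quasi-traceless final theorem} directly supplies a curve in $\widetilde{\modu Q}(\Lambda_i+\Lambda_j)$ through $y_0$ with generic point in ${\modu Q}(\Lambda_i+\Lambda_j)$, and your clutching goes through verbatim.
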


\begin{proof}
It follows from Theorem \ref{thm: quasi-traceless final theorem} that $y$ lies in the closure of $|{\modu Q}[\varpi \succ \tilde{\varpi}]|$. For the other claim, we can construct a specialization to $y$ completely explicitly. One such construction proceeds by blowing up $\{(t)\} \times \{\mathrm{pt}\} \times F$ in $\spec \cc[\![t]\!] \times E \times F$, defining a suitable family of maps into the blown up threefold, and then composing with the blowdown to obtain the desired specialization. For instance, the ghost component will correspond to a fiber of the (trivial) ruling of the exceptional divisor. The details are omitted.  
\end{proof}

The following proposition connects what happens above $o$ to what happens above $\Delta$. However, note that the only way in which $\Delta$ comes up is the requirement (please see below) that ${\modu S}$ dominates $\Delta$; without it, the proposition would be false. 

\begin{prop}\label{prop: key proposition}
Let ${\modu S}$ be a closed irreducible substack of ${\modu T}$ of dimension $g+1$, which dominates $\Delta$. If $y \in |{\modu S}|$, then $|{\modu Q}[\varpi \succ \tilde{\varpi}]| \subset |{\modu S}|$ and $|{\modu V}[\varpi]| \subset |{\modu S}|$.
\end{prop}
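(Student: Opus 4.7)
The plan is to combine flatness of ${\modu S}/\Delta$ with a local intersection-dimension argument at $y$. Replacing ${\modu S}$ with its reduced structure, we note that ${\modu S}$ is integral of dimension $g+1$ and dominates the smooth curve $\Delta$, so the projection ${\modu S}\to\Delta$ is flat. Hence ${\modu S}\cap|{\modu T}_o|$ is pure of dimension $g$. By Lemma \ref{lem: figure B lemma}, each of $\overline{|{\modu V}[\varpi]|}$ and $\overline{|{\modu Q}[\varpi\succ\tilde{\varpi}]|}$ (closures in $|{\modu T}|$, which automatically lie in $|{\modu T}_o|$ since both loci sit over $o$) is a $g$-dimensional irreducible closed subset of $|{\modu T}|$ passing through $y$. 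The goal is then to show that each closure is contained in $|{\modu S}|$.

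The technical core will be a local deformation-theoretic analysis at $y$, aimed at establishing that $|{\modu T}|$ is regular of dimension $g+1$ in a neighborhood of $y$ (or at least a local complete intersection of that dimension). The deformations of the transitional map $y$ as a $\Delta$-stable map are controlled by smoothing the $g$ nodes of its source, by deformations of the map with fixed source, and by variation in the $\Delta$-direction, all modulo the reduced obstructions coming from the abelian-surface target. A careful count, modeled on the proof of Proposition \ref{prop: relatively smooth} and on the local calculations of Lemma \ref{lem: quasi-traceless specific local calculation} and Proposition \ref{prop: quasi-traceless local calculation}, should confirm that the tangent space at $y$ is $(g{+}1)$-dimensional and that the reduced obstructions vanish, giving local smoothness of ${\modu T}$ at $y$.

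Granting this, the standard intersection-dimension inequality in a smooth ambient applies: for any closed irreducible $X\subset|{\modu T}|$ of dimension $g$ meeting ${\modu S}$ at $y$,
\[
\dim_y(X\cap{\modu S})\;\geq\;g+(g+1)-(g+1)\;=\;g.
\]
Since $X$ is irreducible of dimension $g$ and $X\cap{\modu S}\subseteq X$ is closed, this forces $X\subseteq|{\modu S}|$ in an open neighborhood of $y$, and hence globally by the irreducibility of $X$. Applying this to $X=\overline{|{\modu V}[\varpi]|}$ and to $X=\overline{|{\modu Q}[\varpi\succ\tilde{\varpi}]|}$ yields both containments, and in turn the original inclusions $|{\modu V}[\varpi]|\subset|{\modu S}|$ and $|{\modu Q}[\varpi\succ\tilde{\varpi}]|\subset|{\modu S}|$.

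The main obstacle is the local smoothness of ${\modu T}$ at $y$. Because $y$ has a contracted rational component $G$, the map is not unramified and so $y\notin{\modu V}$; thus Proposition \ref{prop: relatively smooth} does not apply directly. The ghost component contributes extra deformation parameters (the freedom to move the image point of $G$ in $V_o$), while the gluing conditions at the three nodes incident to $G$ impose non-trivial constraints, and these two effects must be balanced against each other. Handling this carefully — perhaps via an explicit smooth chart built from the universal deformation of the nodal source together with the reduced deformation theory of the map into $V/\Delta$ — is the principal technical step on which the argument rests.
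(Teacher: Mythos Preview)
Your approach has a genuine and fatal gap: the stack ${\modu T}$ is \emph{not} smooth of dimension $g+1$ at $y$, so the intersection-dimension inequality cannot be invoked. Concretely, the open substack ${\modu T}[\varpi \succ \tilde{\varpi}] \subset {\modu T}_o$ of quasi-simple (not necessarily quasi-traceless) maps has dimension $g+1$: the $1$-morphism \eqref{eqn: split off the genus 2} has $(g-2)$-dimensional fibers over the $3$-dimensional base ${\modu M}^{\mathrm{sm}}_{2,1}(F,d_i+d_j;\Lambda_i+\Lambda_j)$. Its closure $\overline{{\modu T}[\varpi \succ \tilde{\varpi}]}$ is a $(g+1)$-dimensional irreducible closed substack of ${\modu T}$ lying entirely over $o$, it passes through $y$ (degenerate the genus-$2$ tooth $T_{ij}$ to the $\Sigma^2$-curve $T_i \cup G \cup T_j$), and by Theorem \ref{thm: necessary condition to deform} its generic point is not deformable, so it is not contained in any component of ${\modu T}$ dominating $\Delta$. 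Hence through $y$ there are at least two distinct $(g+1)$-dimensional components of ${\modu T}$: the vertical one just described, and the horizontal ${\modu S}$ from the hypothesis. No amount of deformation-theoretic bookkeeping will produce smoothness here; the tangent space at $y$ genuinely has dimension strictly greater than $g+1$. This is also why the remark preceding the proposition singles out the hypothesis that ${\modu S}$ dominates $\Delta$: taking ${\modu S} = \overline{{\modu T}[\varpi \succ \tilde{\varpi}]}$ gives an explicit counterexample when that hypothesis is dropped, and your argument uses the hypothesis only for flatness, which cannot suffice.

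The paper's proof is quite different. It works entirely inside ${\modu S}$, takes an \'etale neighborhood $(U,u)$ of $y$ in ${\modu S}$, and uses Knudsen's clutching to produce the Cartier divisor $D \subset U$ on which the node $B \cap G$ persists. Because points of $D$ have sources with a disconnecting node and hence images that split $\beta$ nontrivially, the choice of $M$ forces $D^{\mathrm{red}} \subset U_o$. Now the hypothesis that ${\modu S}$ dominates $\Delta$ is used in earnest: every $\cc$-point of $D$ is deformable in the sense of Definition \ref{defn: deformable}, so Theorem \ref{thm: necessary condition to deform} applies and rules out the $\Sigma^1$-stratum while forcing the $\Sigma^0$-stratum to land in ${\modu Q}[\varpi \succ \tilde{\varpi}]$ rather than merely in ${\modu T}[\varpi \succ \tilde{\varpi}]$. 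A dimension count then shows that $D$ is generically qsqt and, being $g$-dimensional and mapping quasi-finitely, must dominate the irreducible $g$-dimensional ${\modu Q}[\varpi \succ \tilde{\varpi}]$. The inclusion $|{\modu V}[\varpi]| \subset |{\modu S}|$ is obtained analogously using the node $G \cap T_i$. The essential input you are missing is precisely Theorem \ref{thm: necessary condition to deform}; without it there is no mechanism to separate ${\modu S}$ from the excess vertical component at $y$.
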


\begin{proof}
For the first part, the argument is informally the following: locally at $y$ in ${\modu S}$, the divisor where the node $B \cap G$ (notation as is Definition \ref{defn: transitional map}) `survives' must generically consist of maps in ${\modu Q}[\varpi \succ \tilde{\varpi}]$ by Theorem \ref{thm: necessary condition to deform}, so it must contain the generic point of $|{\modu Q}[\varpi \succ \tilde{\varpi}]|$ for obvious dimension reasons. Let $\smash{ (U,u) \to ({\modu S},y) }$ be an elementary \'{e}tale neighborhood of $y$ in ${\modu S}$, which may shrink according to our needs. Let $f:C \to V_U$ be the corresponding stable map. 

Since $\overline{\modu M}_g \subset {\mathfrak M}_g$ is open, we can assume that the source $C$ is stable, so we have an induced $1$-morphism $U \to \overline{\modu M}_g$. By \cite[Corollary 3.9]{[Kn83]}, we may choose $(U,u)$ such that there exists a connected Cartier divisor $D \subset U$ which contains $u$, and a presentation of $C_D = D \times_U C$ as a clutching $C_D = Y_1 \cup_\rho Y_2$ along a section $\rho:D \to C_D$, such that $\rho(u)$ corresponds to the node $\smash{ B \cap G }$, and $Y_{2,u}$ contains the ghost component. Some care needs to be taken in the special case $g=4$ because of the hypothesis in \cite[Corollary 3.9.b)]{[Kn83]}, but we skip these details. 

Since $D \subset U$ is a Cartier divisor, it must have pure dimension $g$. We claim that $D^\mathrm{red} \subseteq U_o$. Indeed, for any $p \in D(\cc)$, we have 
$$ c_1({\sh L}_p) = f_{p,*}[Y_{1,p}] + f_{p,*}[Y_{2,p}] \in \mathrm{NS}(V_p), \quad \text{with} \quad f_{p,*}[Y_{1,p}], f_{p,*}[Y_{2,p}] > 0, $$
since $D$ is connected and degrees are locally constant, which, by the assumptions in \S\ref{ssec: Severi stacks}, implies that $p$ lies above $\partial M$, hence $p \in U_o$. Moreover, $f_p:C_p \to V_p = V_o$ must satisfy the properties in Theorem \ref{thm: necessary condition to deform}, for all $p \in D({\mathbb C})$, by Remark \ref{rmk: deformable alternative} and Theorem \ref{thm: necessary condition to deform}. We've boiled down the proposition to a technical claim which only `sees' $V_o$, which we state separately for the sake of clarity. 

\begin{claim}\label{claim: the technical claim that key proposition boils down to}
Let $N$ be a reduced, separated, finite type scheme over $\cc$ of pure dimension $g$, $p_0 \in N(\cc)$, and a $1$-morphism $\chi: N \to {\modu T}_o$ corresponding to a stable map $f:C \to V_o \times N$ over $N$, such that the following hold:
\begin{enumerate}
\item the morphism $\chi$ is quasi-finite, and $\chi(p_0) = y$;
\item the source $C$ is presented as a gluing $C= Y_1 \cup_\rho Y_2$ along a singular section $\rho:N \to C$, as above;
\item for any $p \in N(\cc)$, the stable map $\chi(p)$, that is, $f_p:C_p \to V_o$, satisfies the properties in the conclusion of Theorem \ref{thm: necessary condition to deform};
\item the source $C$ is stable (over $S$).
\end{enumerate}
Then any generic point $\nu \in N$ such that $p_0 \in \overline{\nu}$ is mapped by $\chi$ to the generic point of $|{\modu Q}[\varpi \succ \tilde{\varpi}]|$. 
\end{claim}

\begin{proof}
We begin with some technical reductions. The reader may skim over this paragraph and refer back as necessary. Note that $(Y_2,\rho)$ is stable marked curve, and of course $(Y_2,f|Y_2,\rho)$ is a stable map, so we have $1$-morphisms
$$ \widetilde{\chi}_2: N \longrightarrow \overline{\modu M}_{2,1} \quad \text{and} \quad \chi_2:N \longrightarrow \overline{\modu M}_{2,1}(F). $$
We may assume that $\chi_2$ factors through $\overline{\modu M}_{2,1}(F,d_i+d_j;\Lambda_i+\Lambda_j) \hookrightarrow \overline{\modu M}_{2,1}(F)$ by shrinking $N$. Let $e,f \in \hh_1(V_o,\zz)$ be the classes of the fibers of $V_o \to F$ and $V_o \to E$ respectively. We have
$$ (Y_1,f|Y_1,\rho) \in \overline{\modu M}_{g-2,1}(V_o, e +d'f)(N), \quad \text{where} \quad d'=d-d_i-d_j. $$ 
Let ${\modu M}^\mathrm{gen}_{g-2,1}(V_o, e +d'f)$ be the open substack parametrizing stable maps whose dual graph is of the `generic' type with the marking on the backbone; we may shrink $N$ so that $(Y_1,f|Y_1,\rho)$ belongs to this substack. Recall that clutching maps for moduli of stable maps can be constructed in general based on \cite[Proposition 2.4]{[BM96]}. The one relevant to our situation takes the form
\begin{equation}\label{eqn: unpleasant clutching map} \gamma:F \times \overline{\modu M}_{1,g-2}(E, 1) \longrightarrow \overline{\modu M}_{g-2,1}(V_o, e +d'f), \end{equation}
as explained below. (Coincidentally, $\overline{\modu M}_{1,g-2}(E, 1)$ is a Fulton-MacPherson configuration space \cite{[FM94]}, though we will ultimately only be concerned with its open stratum, the complement of the large diagonal in $\smash{ E^{g-2} }$.) Then $\gamma$ informally glues $g-3$ isogenies onto the fibers of $V_o \to E$ corresponding to $\Lambda_k$, $k \neq i,j$, to a (possibly bubbled-up, although this won't happen for the fibers of $f|Y_1$) fiber of $V_o \to F$ with $g-2$ markings, at $g-3$ of these markings. Note that $\gamma$ is finite, and in fact \'{e}tale above ${\modu M}^\mathrm{gen}_{g-2,1}(V_o, e +d'f)$, so, after passing to an elementary \'{e}tale neighborhood of $(N,p_0)$, we may assume that the $1$-morphism $\smash{ N \to \overline{\modu M}_{g-2,1}(V_o, e +d'f) }$ induced by $(Y_1,f|Y_1,\rho)$ also factors through $\gamma$.

Let $\Sigma^0$, $\Sigma^1$, $\Sigma^2$ be the strata of $\overline{\modu M}_{2,1}$ consisting of stable marked curves with the following dual graphs (the labels are the genera, and the leg is the marked point)
\begin{center}
\begin{tikzpicture}[scale = 1.3]
\fill (-1,0) circle (0.7mm);
\draw (-1,0) -- (-1,-0.4);
\node at (-1,-0.7) {$\Sigma^0$};
\node at (-1,0.3) {$2$};

\fill (2,0) circle (0.7mm);
\fill (3,0) circle (0.7mm);
\draw (2,0) -- (2,-0.4);
\draw (2,0) -- (3,0);
\node at (2.5,-0.7) {$\Sigma^1$};
\node at (2,0.3) {$1$};
\node at (3,0.3) {$1$};

\fill (5,0) circle (0.7mm);
\fill (6,0) circle (0.7mm);
\fill (7,0) circle (0.7mm);
\draw (6,0) -- (6,-0.4);
\draw (5,0) -- (6,0);
\draw (7,0) -- (6,0);
\node at (6,-0.7) {$\Sigma^2$};
\node at (5,0.3) {$1$};
\node at (6,0.3) {$0$};
\node at (7,0.3) {$1$};
\end{tikzpicture}
\end{center}
Trivially, $\Sigma^i$ has codimension $i$ in $\overline{\modu M}_{2,1}$. Let $\smash{ \Sigma^i_N = \widetilde{\chi}_2^{-1}(\Sigma^i) }$. Since $\Sigma^0 \cup \Sigma^1 \cup \Sigma^2$ is open in $\overline{\modu M}_{2,1}$, we may assume without loss of generality that 
\begin{equation}\label{eqn: unpleasant important claim decomposition}
N = \bigcup_{ i = 0,1, 2} \Sigma_i^N.
\end{equation}
The previous paragraph shows that the restriction of $f$ above $\Sigma_N^0$ is quasi-simple of type $\varpi \succ \varpi'$ (cf. Definition \ref{defn: qs and qsqt}). Rephrased,
\begin{equation}\label{eqn: unpleasant important claim factorization}
\text{the restriction of $\chi$ to $\Sigma_N^0$ factors through ${\modu T}[\varpi \succ \varpi'] \hookrightarrow {\modu T}_o$}. 
\end{equation}
The fibers of $\smash{\widetilde{\chi}_2}$ have dimension at most $g-3$, and hence
\begin{equation}\label{eqn: unpleasant important claim dimension count}
\dim \Sigma^2_N \leq \dim \Sigma^2 + g-3 = g-1.
\end{equation}
The third assumption and Theorem \ref{thm: necessary condition to deform} imply that
\begin{equation}\label{eqn: unpleasant important claim impossible configuration}
\Sigma^1_N = \emptyset.
\end{equation}
It follows from \eqref{eqn: unpleasant important claim decomposition}, \eqref{eqn: unpleasant important claim dimension count}, and \eqref{eqn: unpleasant important claim impossible configuration} that $\Sigma^0_N$ is dense in $N$. The third assumption in Claim \ref{claim: the technical claim that key proposition boils down to}, \eqref{eqn: unpleasant important claim factorization}, and Remark \ref{rmk: check qsqt at complex points} imply that the restriction of $f$ above $\Sigma_N^0$ is an object of ${\modu Q}[\varpi \succ \tilde{\varpi}]$, that is, the restriction of $\chi$ to $\Sigma_N^0$ even factors through the composition ${\modu Q}[\varpi \succ \varpi'] \hookrightarrow {\modu T}[\varpi \succ \varpi'] \hookrightarrow {\modu T}_o$. Then the first assumption in Claim \ref{claim: the technical claim that key proposition boils down to}, and the previously proved fact that $\Sigma^0_N$ is dense in $N$ complete the proof.
\end{proof}

Applying Claim \ref{claim: the technical claim that key proposition boils down to} with $N= D^\mathrm{red}$, we conclude that $|{\modu Q}[\varpi \succ \tilde{\varpi}]| \subset |{\modu S}|$. 

The idea to prove $|{\modu V}[\varpi]| \subset |{\modu S}|$ is similar: locally at $y$ in ${\modu S}$, the divisor where the node $G \cap T_i$ (we may equally well take $G \cap T_j$) survives must generically consist of maps in ${\modu V}[\varpi]$ by Theorem \ref{thm: necessary condition to deform}, so it must contain the generic point of $|{\modu V}[\varpi]|$ for trivial dimension reasons. The technical details are similar to the ones used to prove the first claim, and they are left to the reader. \end{proof}

Remark \ref{rmk: remark about closures}, Lemma \ref{lem: figure B lemma}, and Proposition \ref{prop: key proposition} imply the following.

\begin{cor}\label{cor: final corollary}
Let ${\modu Z}$ be an irreducible, or equivalently, connected, component of ${\modu V}$, and $\smash{ \overline{{\modu Z}}}$ its closure relative to ${\modu T}$ with the reduced structure. Then $|{\modu Z}|$ contains $|{\modu V}[\varpi]|$ if and only if $|\overline{\modu Z}|$ contains $|{\modu Q}[\varpi \succ \tilde{\varpi}]|$. 
\end{cor}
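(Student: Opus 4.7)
The plan is to apply Proposition \ref{prop: key proposition} with ${\modu S} = \overline{\modu Z}$ in both directions of the equivalence, using Lemma \ref{lem: figure B lemma} to detect that the transitional stable map $y$ lies in $|\overline{\modu Z}|$ in each case, and Remark \ref{rmk: remark about closures} to descend from a containment in $|\overline{\modu Z}|$ back to one in $|{\modu Z}|$ when needed.

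First I would verify that $\overline{\modu Z}$ meets the hypotheses of Proposition \ref{prop: key proposition}. It is closed and irreducible by construction. Since ${\modu Z}$ is dense in $\overline{\modu Z}$, Proposition \ref{prop: relatively smooth} gives $\dim \overline{\modu Z} = g+1$. Moreover, ${\modu Z}$ must dominate $\Delta$: otherwise it would be contained in the central fiber, which has dimension $g$, contradicting that ${\modu Z}$ is a $(g+1)$-dimensional irreducible component of ${\modu V}$; hence $\overline{\modu Z}$ dominates $\Delta$ as well.

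For the forward direction, assume $|{\modu V}[\varpi]| \subseteq |{\modu Z}|$. By Lemma \ref{lem: figure B lemma}, $y$ lies in the closure of $|{\modu V}[\varpi]|$ relative to $|{\modu T}|$, and since $|\overline{\modu Z}|$ is closed in $|{\modu T}|$ and contains $|{\modu V}[\varpi]|$, this forces $y \in |\overline{\modu Z}|$. Proposition \ref{prop: key proposition} applied to ${\modu S} = \overline{\modu Z}$ then yields $|{\modu Q}[\varpi \succ \tilde{\varpi}]| \subseteq |\overline{\modu Z}|$, as desired.

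Conversely, assume $|{\modu Q}[\varpi \succ \tilde{\varpi}]| \subseteq |\overline{\modu Z}|$. Again by Lemma \ref{lem: figure B lemma}, $y$ lies in the closure of $|{\modu Q}[\varpi \succ \tilde{\varpi}]|$ relative to $|{\modu T}|$, so $y \in |\overline{\modu Z}|$. A second application of Proposition \ref{prop: key proposition} gives $|{\modu V}[\varpi]| \subseteq |\overline{\modu Z}|$, and Remark \ref{rmk: remark about closures} upgrades this to $|{\modu V}[\varpi]| \subseteq |{\modu Z}|$. There is no genuine obstacle here: the three ingredients combine cleanly, with $y$ acting as the pivot shared by the closures of both ${\modu V}[\varpi]$ and ${\modu Q}[\varpi \succ \tilde{\varpi}]$.
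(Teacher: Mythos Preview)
Your proof is correct and follows exactly the approach the paper intends: the paper simply states that Remark \ref{rmk: remark about closures}, Lemma \ref{lem: figure B lemma}, and Proposition \ref{prop: key proposition} together imply the corollary, and you have spelled out precisely how they combine. The only cosmetic point is that in verifying domination of $\Delta$, the fiber in question need not be the central one, but since all fibers of ${\modu V} \to \Delta$ have dimension $g$ by Proposition \ref{prop: relatively smooth}, the argument goes through unchanged.
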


Claim \ref{claim: key claim} follows from Corollary \ref{cor: final corollary} since clearly ${\modu Q}[\varpi_1 \succ \varpi_3] = {\modu Q}[\varpi_2 \succ \varpi_3]$ for any roof (\S\ref{ssec: partitions combinatorics}) $\varpi_1 \succ \varpi_3 \prec \varpi_2$, so the proof of Theorem \ref{thm: main theorem} is complete.


\begin{thebibliography}{aaaaaaaa}

\bibitem[AM16]{[AM16]}
W. Alagala and A. Maciocia, \emph{Critical k-Very Ampleness for Abelian Surfaces}, Kyoto J. Math. \textbf{56}, 33--47 (2016)

\bibitem[ACG11]{[ACG11]} E. Arbarello, M. Cornalba, and Ph. Griffiths, \emph{Geometry of algebraic curves}, Vol. II, with a contribution by J. Harris, Grundlehren Math. Wiss. \textbf{268}, Springer (2011)

\bibitem[At57]{[At57]}
M. F. Atiyah, \emph{Vector bundles over an elliptic curve}, Proc. London Math. Soc. \textbf{7}, 414--452 (1957)

\bibitem[Ba19]{[Ba19]}
E. Ballico, \emph{On the irreducibility of the Severi variety of nodal curves in a smooth surface}, Arch. Math. \textbf{113}, 483--487 (2019)

\bibitem[Be97]{[Be97]}
K. Behrend, \emph{Gromov-Witten invariants in algebraic geometry}, Invent. Math. \textbf{127}, 601--617 (1997).

\bibitem[BM96]{[BM96]} 
K. Behrend and Yu. Manin, \emph{Stacks of stable maps and Gromov-Witten invariants}, Duke Math. J. \textbf{85}, 1--60 (1996)

\bibitem[BF97]{[BF97]}
K. Behrend and B. Fantechi, \emph{The intrinsic normal cone}, Invent. Math. \textbf{128}, 45--88 (1997)

\bibitem[BS91]{[BS91]}
M. Beltrametti and A. J. Sommese, \emph{Zero cycles and $k$-th order embeddings of smooth projective surfaces}, in \emph{Problems in the theory of surfaces and their classification} (Cortona, 1988), Sympos. Math., XXXII, 33--48. Academic Press (1991)

\bibitem[BL99]{[BL99]}
J. Bryan and C. Leung, \emph{Generating functions for the number of curves on abelian surfaces}, Duke Math. J. \textbf{99} (2), 311--328 (1999)

\bibitem[Bu15]{[Bu15]}
G. Bujokas, \emph{Covers of an elliptic curve $E$ and curves in $E \times {\mathbb P}^1$}, PhD Thesis, Harvard University (2015)

\bibitem[CD12]{[CD12]}
C. Ciliberto and Th. Dedieu, \emph{On universal Severi varieties of low genus K3 surfaces}, Math. Z. \textbf{271}, 953--960 (2012)

\bibitem[CD19]{[CD19]}
C. Ciliberto and Th. Dedieu, \emph{On the irreducibility of Severi varieties on K3 surfaces}, Proc. Amer. Math. Soc. \textbf{147}, 4233--4244  (2019)

\bibitem[CFGK17]{[CFGK17]}
C. Ciliberto, F. Flamini, C. Galati and A. L. Knutsen, \emph{Moduli of nodal curves on K3 surfaces}, Adv. Math. \textbf{309}, 624--654 (2017)

\bibitem[Ch99]{[Ch99]}
X. Chen, \emph{Rational curves on K3 surfaces}, J. Alg. Geom. \textbf{8}, 245--278 (1999)

\bibitem[Ch02]{[Ch02]}
X. Chen, \emph{A simple proof that rational curves on K3 are nodal}, Math. Ann. \textbf{324} (1), 71--104 (2002)

\bibitem[Co07]{[Co07]}
B. Conrad, \emph{Arithmetic moduli of generalized elliptic curves}, J. Inst. Math. Jussieu \textbf{6} (2), 209--278 (2007) 

\bibitem[De20]{[De20]}
T. Dedieu, \emph{Comment on: On the irreducibility of the Severi variety of nodal curves in a smooth surface, by E. Ballico}, Arch. Math. \textbf{114}, 171--174 (2020)

\bibitem[DM69]{[DM69]}
P. Deligne and D. Mumford, \emph{The irreducibility of the space of curves of given genus}, Publ. Math. IH\'{E}S \textbf{36}, 75--109 (1969)

\bibitem[EGA III$\mathop{}_1$]{[EGAIII1]} A. Grothendieck, \emph{\'{E}l\'{e}ments de g\'{e}om\'{e}trie alg\'{e}brique: III. \'{E}tude cohomologique
des faisceaux coh\'{e}rents, Premi\`{e}re partie}, Publ. Math. IH\'{E}S. \textbf{11} (1961)

\bibitem[EGA III$\mathop{}_2$]{[EGAIII2]} A. Grothendieck, \emph{\'{E}l\'{e}ments de g\'{e}om\'{e}trie alg\'{e}brique: III. \'{E}tude cohomologique
des faisceaux coh\'{e}rents, Seconde partie}, Publ. Math. IH\'{E}S. \textbf{17} (1963)

\bibitem[FM94]{[FM94]}
W. Fulton and R. MacPherson, \emph{A compactification of configuration spaces}, Ann. Math. \textbf{139}, 183--225 (1994) 

\bibitem[FP97]{[FP95]}
W. Fulton and R. Pandharipande, \emph{Notes on stable maps and quantum cohomology}, in \emph{Algebraic Geometry} (Santa Cruz, 1995) edited by J. K\'{o}llar et. al., Proc. Sympos. Pure Math. \textbf{62}, 45--96, Amer. Math. Soc. (1997)

\bibitem[GK87]{[GK87]}
D. Gabai and W. Kazez, \emph{The classification of maps of surfaces}, Invent. Math. \textbf{90} (2), 219--242 (1987)

\bibitem[Ha77]{[Ha77]}
R. Hartshorne, \emph{Algebraic Geometry}, Graduate Texts in Mathematics \textbf{52}, Springer (1977)

\bibitem[Ha86]{[Ha86]}
J. Harris, \emph{On the Severi problem}, Invent. Math. \textbf{84} (3), 445--461 (1986)

\bibitem[HKK$\mathop{}^+$03]{[mirror-symmetry]}
K. Hori, S. Katz, A. Klemm, R. Pandharipande, R. Thomas, C. Vafa, R. Vakil, and E. Zaslow, \emph{Mirror symmetry}, Clay Mathematics Monographs (2003)

\bibitem[Ka03]{[Ka03]}
E. Kani, \emph{Hurwitz spaces of genus 2 covers of an elliptic curve}, Collect. Math. \textbf{54} (1), 1--51 (2003)

\bibitem[Ke05]{[Ke05]}
T. Keilen, \emph{Irreducibility of equisingular families of curves -- improved conditions}, Comm. Alg. \textbf{33}, 455--466 (2005)

\bibitem[Ke13]{[Ke13]}
M. Kemeny, \emph{The universal Severi variety of rational curves on K3 surfaces}, Bull. London Math. Soc. \textbf{45}, 159--174 (2013)

\bibitem[KKO14]{[KKO14]}
B. Kim, A. Kresch and Y.-G. Oh, \emph{A compactification of the space of maps from curves}, Trans. Amer. Math. Soc. \textbf{366}, 51--74 (2014)

\bibitem[Kl80]{[Kl80]}
S. Kleiman, \emph{Relative duality for quasi-coherent sheaves}, Compos. Math. \textbf{4} (1), 39--60 (1980)

\bibitem[KLM19]{[KLM19]}
A. L. Knutsen, M. Lelli-Chiesa and G. Mongardi, \textit{Severi varieties and Brill-Noether theory of curves on abelian surfaces}, J. Reine Angew. Math. \textbf{749} (4), 161--201 (2019)

\bibitem[KL19]{[KL19]}
A. L. Knutsen and M. Lelli-Chiesa, \textit{Genus two curves on abelian surfaces}, preprint, \href{https://arxiv.org/abs/1901.07603}{\texttt{arXiv:1901.07603}} (2019)

\bibitem[Kn83]{[Kn83]}
F.\ Knudsen, \emph{The projectivity of the moduli space of stable curves II: the stacks $\overline{\mathcal M}_{g,n}$}, Math. Scan. \textbf{52}, 161--199 (1983)

\bibitem[KT14]{[KT14]}
M. Kool and R. P. Thomas, \emph{Reduced classes and curve counting on surfaces I: theory}, Alg. Geom. \textbf{1}, 334--383 (2014)

\bibitem[Li01]{[Li01]}
J. Li, \emph{Stable morphisms to singular schemes and relative stable morphisms}, J. Diff. Geom. \textbf{57}, 509--578 (2001)

\bibitem[LLR04]{[LLR04]}
Q. Liu, D. Lorenzini, M. Raynaud, \emph{N\'{e}ron models, Lie algebras, and reduction of curves of genus one}, Invent. Math. \textbf{157}, 455--518 (2004)

\bibitem[Mu85]{[Mu85]}
D. Mumford, \emph{Abelian varieties}, 2nd reprinted edition, Tata Institute of Fundamental Research, Oxford Univ. Press (1985)

\bibitem[Ro11]{[Ro11]}
M. Romagny, \emph{Composantes connexes et irr\'{e}ductibles en familles}, Manuscripta Math. \textbf{136}, 1--32 (2011)

\bibitem[Stacks]{[stacks]}
The Stacks project authors, \emph{The Stacks project}, \url{https://stacks.math.columbia.edu/}

\bibitem[Se21]{[Se21]}
F. Severi, \emph{Vorlesungen \"{u}ber algebraische Geometrie}, Teubner (1921)

\bibitem[Te80]{[Te80]}
B. Teissier, \emph{R\'{e}solution simultan\'{e}e I, II}, Springer Lecture Notes in Mathematics \textbf{777}, 71--146 (1980)

\bibitem[Vo13]{[Vo13]}
C. Voisin, \emph{Hodge loci}, in \emph{Handbook of moduli: Volume III}, 507--546, International Press (2013)

\bibitem[Za19a]{[Za19a]}
A. Zahariuc, \emph{Elliptic surfaces and linear systems with fat points}, Math. Z. \textbf{293}, 647--660 (2019)

\bibitem[Za19b]{[Za19b]}
A. Zahariuc, \emph{The irreducibility of the generalized Severi varieties}, Proc. London Math. Soc. \textbf{119} (6), 1431--1463 (2019)
\end{thebibliography}
\end{document}